\newtheorem{theorem}{Theorem}[section]
\newtheorem{lemma}[theorem]{Lemma}
\newtheorem{proposition}[theorem]{Proposition}
\newtheorem{corollary}[theorem]{Corollary}
\newtheorem{conjecture}[theorem]{Conjecture}
\theoremstyle{definition}
\newtheorem{definition}[theorem]{Definition}
\newtheorem{question}[theorem]{Question}
\newtheorem{example}[theorem]{Example}
\newtheorem{remark}[theorem]{Remark}
\numberwithin{equation}{section}
\begin{document}

\author{Pankaj Dey and Mithun Mukherjee}

\address{Pankaj Dey, School of Mathematics, Indian Institute of Science Education and Research, Thiruvananthapuram, Maruthamala PO, Vithura, Thiruvananthapuram-695551, Kerala, India.}
\email{\textcolor[rgb]{0.00,0.00,0.84}{pankajdey15@iisertvm.ac.in}}

\address{Mithun Mukherjee, School of Mathematical \& Computational Sciences, Indian Association for the Cultivation of Science, 2A \& 2B Raja S C Mullick Road, Jadavpur, Kolkata-700032, INDIA.}
\email{\textcolor[rgb]{0.00,0.00,0.84}{mithun.mukherjee@iacs.res.in}}

\title{Higher Rank Numerical Ranges of Normal Operators and unitary dilations}

\begin{abstract}
We describe here the higher rank numerical range, as defined by {Choi, Kribs and \.{Z}yczkowski}, of a normal operator on an infinite dimensional Hilbert space in terms of its spectral measure. This generalizes a result of Avenda\~{n}o for self-adjoint operators. An analogous description of the numerical range of a normal operator by Durszt is derived for the higher rank numerical range as an immediate consequence. It has several interesting applications. We show using Durszt's example that there exists a normal contraction $T$ for which the intersection of the higher rank numerical ranges of all unitary dilations of $T$ contains the higher rank numerical range of $T$ as a proper subset. Finally, we strengthen and generalize a result of Wu by providing a necessary and sufficient condition for the higher rank numerical range of a normal contraction being equal to the intersection of the higher rank numerical ranges of all possible unitary dilations of it.   
\end{abstract}
\maketitle

\let\thefootnote\relax\footnote{\textbf{Keywords:} \textit{Higher rank numerical range, Normal operator, Spectral measure, Unitary dilation}\newline\indent
\textbf{MSC(2020):} 47A12, 47A20, 47B15, 15A60, 81P68.}

\section{Introduction}
The notion of the quadratic forms associated with matrix was extended for operators on Hilbert space which is known as the numerical range. Let $\mathscr{H}$ be a Hilbert space and $\mathscr{B(\mathscr{H})}$ be the algebra of bounded linear maps on $\mathscr{H}$. Suppose $T\in\mathscr{B\mathscr{(H)}}$. The numerical range of $T$ denoted by $W(T)$ is defined as
$$W(T):=\left\{\langle Tf,f\rangle: f\in \mathscr{H},\Vert f\Vert=1\right\}.$$
It was studied extensively. A celebrated result in the theory of the numerical range is that $W(T)$ is a convex set which is known as Toeplitz Hausdorff Theorem (cf. \cite{GR}). Durszt  \cite{D} showed that the numerical range of a normal operator $T$ is the intersection of all convex Borel set $s$ in $\mathbb{C}$ such that $E(s)=I$ where $E$ is the unique spectral measure associated with $T$ and used it to settle a conjecture of Halmos \cite{PH2} in negative.

Choi, Kribs and \.{Z}yczkowski have first defined the higher rank numerical range in the context of ``quantum error correction" \cite{CKZ} which is defined in the following way. Let $T\in\mathscr{B\mathscr{(H)}}$ and $k\in\mathbb{N}_{\infty}:=\mathbb{N}\cup\{\infty\}$. The $k$-rank numerical range of $T$ denoted by $\Lambda_k(T)$ is defined as
$$\Lambda_k(T):=\left\{\lambda\in\mathbb{C}: PTP=\lambda P,\text{ for some projection } P \text{ of rank }k\right\}$$
or, equivalently $\lambda\in\Lambda_k(T)$ if and only if there is an orthonormal set $\{f_j\}_{j=1}^k$ such that $\langle Tf_j,f_k\rangle=\lambda\delta_{j,k}$ for $j,k\in\{1,2,\cdots,k\}$. Clearly, 
$$W(T)=\Lambda_1(T)\supseteq\Lambda_2(T)\supseteq\cdots\supseteq\Lambda_k(T)\supseteq\cdots.$$

Choi, Kribs and \.{Z}yczkowski have given a description of the higher rank numerical range of self-adjoint matrices in terms of its eigenvalues \cite{CKZ}. Let $M_n$ be the algebra of $n$-by-$n$ matrices with complex entries. Suppose $T\in M_n$ be a self-adjoint matrix and $1\leq k\leq n$. CKZ showed
\begin{align}
\Lambda_k(T)=\left[\lambda_{n-k+1},\lambda_k\right]\label{hermitian matrix}
\end{align}
where $\lambda_1\geq\lambda_2\geq\cdots\geq\lambda_n$ are eigenvalues of $T$. In the same article, they put forward a conjecture (CKZ conjecture) on the higher rank numerical range of normal matrices. Let $T\in M_n$ be a normal matrix and $1\leq k\leq n$. CKZ conjecture says
\begin{align}
\Lambda_k(T)=\bigcap\limits_{1\leq j_1<\cdots<j_{n-k+1}\leq n} \text{ conv }\left\{\lambda_{j_1},\ldots,\lambda_{j_{n-k+1}}\right\}\label{CKZ conjecture}
\end{align}
where $\lambda_1,\ldots\lambda_n$ are eigenvalues of $T$. Condition for the higher rank numerical range of a matrix being non-empty was also studied in \cite{CKZ} and \cite{LPS2}.

Li, Poon and Sze have described the higher rank numerical range of a matrix as the intersection of the closed half plane determined by eigenvalue which settled CKZ conjecture in affirmative \cite{LS}. Let $T\in M_n$ and $1\leq k\leq n$. They showed
\begin{align}
\Lambda_k(T)=\bigcap\limits_{\xi\in[0,2\pi)}\left\{\mu\in\mathbb{C}:\Re({e^{i\xi}\mu})\leq\lambda_k(\Re({e^{i\xi}T}))\right\}.\label{geometric description}
\end{align}
It also proved that the higher rank numerical range of a matrix is always convex. However, the convexity of the higher rank numerical range of any operator was first independently shown by Woerdeman \cite{HW} (see also \cite{CGHK}).

There are some work on extending the finite dimensional results on the higher rank numerical range to infinite dimension. Avenda\~{n}o has described the higher rank numerical range of self-adjoint operators \cite{RM} on infinite dimensional Hilbert space in terms of the spectral measure associated with it which extends (\ref{hermitian matrix}). Let $T\in\mathscr{B(\mathscr{H})}$ be self-adjoint operator and $E$ be the unique spectral measure associated with $T$. Suppose $k\in\mathbb{N}_{\infty}$. Let
\begin{align*}
&A_k:=\left\{a\in\mathbb{R}:\text{ dim ran}E(-\infty,a]<k\right\},\\
& B_k:=\left\{b\in\mathbb{R}:\text{ dim ran}E[b,\infty)<k\right\},\\
&\widetilde{\Omega}_k:=A_k^c\cap B_k^c.
\end{align*}
Avenda\~{n}o proved the following.
\begin{theorem}[Theorem 3.4, \cite{RM}]\label{result for selfadjoint operator}
Let $T\in\mathscr{B(\mathscr{H})}$ be self-adjoint operator and $k\in\mathbb{N}_{\infty}$. Then
\begin{align*}
\Lambda_k(T)=\bigcap\limits_{V\in\mathscr{V}_k} W(V^*TV)=\widetilde{\Omega}_k
\end{align*}
where $\mathscr{V}_k$ be the set of all isometries $V:\mathscr{H}\rightarrow\mathscr{H}$ such that codimension of ran $V$ is less than $k$.
\end{theorem}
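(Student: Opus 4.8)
The plan is to prove the inclusions $\widetilde{\Omega}_k\subseteq\Lambda_k(T)$ and $\Lambda_k(T)\subseteq\bigcap_{V\in\mathscr{V}_k}W(V^*TV)\subseteq\widetilde{\Omega}_k$, which together give $\widetilde{\Omega}_k\subseteq\Lambda_k(T)\subseteq\bigcap_{V\in\mathscr{V}_k}W(V^*TV)\subseteq\widetilde{\Omega}_k$ and hence equality throughout; I take $\mathscr{H}$ infinite dimensional, the setting of the theorem. Fix $\lambda\in\mathbb{C}$ and write $\mathscr{H}_-=\operatorname{ran}E((-\infty,\lambda))$, $\mathscr{H}_0=\operatorname{ran}E(\{\lambda\})$, $\mathscr{H}_+=\operatorname{ran}E((\lambda,\infty))$. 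These are mutually orthogonal, each reduces $T$, $Th=\lambda h$ on $\mathscr{H}_0$, and for a unit vector $h\in\mathscr{H}_+$ one has
$$\langle Th,h\rangle=\int_{(\lambda,\infty)}t\,d\langle E(t)h,h\rangle>\lambda$$
because the integrand strictly exceeds $\lambda$ on a set of full $\langle E(\cdot)h,h\rangle$-mass; symmetrically $\langle Th,h\rangle<\lambda$ for unit $h\in\mathscr{H}_-$. This strict inequality is what makes the endpoints come out right.

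For $\Lambda_k(T)\subseteq\bigcap_{V}W(V^*TV)$, fix $\lambda\in\Lambda_k(T)$ with rank-$k$ projection $P$, $PTP=\lambda P$, and fix $V\in\mathscr{V}_k$. Since $\operatorname{ran}V$ has codimension $<k=\dim\operatorname{ran}P$, the orthogonal projection onto $(\operatorname{ran}V)^\perp$ is not injective on $\operatorname{ran}P$, so $\operatorname{ran}P\cap\operatorname{ran}V\neq\{0\}$; a unit vector $h$ there satisfies $\langle Th,h\rangle=\lambda$ (as $Ph=h$ and $PTP=\lambda P$), and since $h\in\operatorname{ran}V$ this equals $\langle V^*TVg,g\rangle$ for $g=V^*h$. (Self-adjointness is not used here.) For $\bigcap_{V}W(V^*TV)\subseteq\widetilde{\Omega}_k$, note $I\in\mathscr{V}_k$, so the left-hand side lies in $W(T)\subseteq\mathbb{R}$; it therefore suffices, given real $\lambda\notin\widetilde{\Omega}_k$, to exhibit $V\in\mathscr{V}_k$ with $\lambda\notin W(V^*TV)$. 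Since $\widetilde{\Omega}_k^{\,c}=A_k\cup B_k$, assume $\lambda\in A_k$ (the case $\lambda\in B_k$ is symmetric via $T\mapsto-T$); then $r:=\dim\operatorname{ran}E((-\infty,\lambda])<k$, in particular $r<\infty$. Hence $\mathscr{H}_+$ has finite codimension $r<k$, so $\dim\mathscr{H}_+=\dim\mathscr{H}$ and there is an isometry $V:\mathscr{H}\to\mathscr{H}$ with $\operatorname{ran}V=\mathscr{H}_+$, which lies in $\mathscr{V}_k$. For every unit $h\in\operatorname{ran}V=\mathscr{H}_+$ we have $\langle Th,h\rangle>\lambda$, so $\lambda\notin W(V^*TV)$.

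The remaining inclusion $\widetilde{\Omega}_k\subseteq\Lambda_k(T)$ is the crux. Fix $\lambda\in\widetilde{\Omega}_k$, so $\dim\operatorname{ran}E((-\infty,\lambda])\geq k$ and $\dim\operatorname{ran}E([\lambda,\infty))\geq k$. If $\dim\mathscr{H}_0\geq k$, take $k$ orthonormal eigenvectors for $\lambda$ and we are done. Otherwise $d:=\dim\mathscr{H}_0<k$ and the hypotheses force $\dim\mathscr{H}_-\geq k-d$ and $\dim\mathscr{H}_+\geq k-d$ (read $k-d$ as $\aleph_0$ when $k=\infty$). The key auxiliary fact is: a spectral subspace $\operatorname{ran}E(S)$ of dimension $\geq m$ contains an orthonormal family $\{u_i\}$ of cardinality $m$ with $\langle Tu_i,u_j\rangle=0$ for $i\neq j$ — for finite $m$ this is diagonalisation of the self-adjoint compression of $T$ to any $m$-dimensional subspace of $\operatorname{ran}E(S)$, and for infinite $m$ one instead takes the $u_i$ supported on pairwise disjoint Borel subsets of $S$ (or, if $\sigma(T|_{\operatorname{ran}E(S)})$ is finite, as orthonormal eigenvectors of an eigenvalue of infinite multiplicity). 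Applying this with $S=(-\infty,\lambda)$ and $S=(\lambda,\infty)$ gives families $\{u_i\}\subseteq\mathscr{H}_-$ with $a_i:=\langle Tu_i,u_i\rangle<\lambda$ and $\{v_i\}\subseteq\mathscr{H}_+$ with $b_i:=\langle Tv_i,v_i\rangle>\lambda$, each of cardinality $k-d$. Choose $t_i\in(0,1)$ with $t_ia_i+(1-t_i)b_i=\lambda$ and put $g_i=\sqrt{t_i}\,u_i+\sqrt{1-t_i}\,v_i$. Orthogonality and $T$-invariance of $\mathscr{H}_-$ and $\mathscr{H}_+$ annihilate the cross terms $\langle Tu_i,v_j\rangle$ and $\langle Tv_i,u_j\rangle$, so $\langle g_i,g_j\rangle=\delta_{ij}$ and $\langle Tg_i,g_j\rangle=\lambda\delta_{ij}$; adjoining orthonormal eigenvectors $w_1,\dots,w_d$ for $\lambda$ (orthogonal to the $g_i$, with $\langle Tg_i,w_l\rangle=0$) produces an orthonormal set of cardinality $k$ on whose span the compression of $T$ is $\lambda I$, i.e.\ $\lambda\in\Lambda_k(T)$. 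I expect the main obstacle to be exactly this last inclusion, specifically the infinite-rank case of the auxiliary fact together with the cardinal bookkeeping when $k=\infty$; once those are in place the intermediate-value choice of $t_i$ and the vanishing of cross terms are routine.
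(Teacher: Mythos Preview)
The paper does not actually prove this statement: it is quoted from \cite{RM} in the introduction and then \emph{used} as a black box (in the passage through $\Re(e^{i\theta}T)$ and in Case~III of the boundary analysis) inside the proof of Theorem~\ref{main theorem}. So there is no proof in the present paper to compare against directly.

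That said, your argument is correct and self-contained. The first inclusion $\Lambda_k(T)\subseteq\bigcap_{V}W(V^*TV)$ is exactly Proposition~\ref{b}, and your dimension-count proof is the standard one. Your proof of $\bigcap_{V}W(V^*TV)\subseteq\widetilde\Omega_k$ via the isometry onto $\mathscr{H}_+=\operatorname{ran}E((\lambda,\infty))$ is the self-adjoint specialisation of the paper's argument with $V_H$ onto $\operatorname{ran}E(H^c)$ at the start of the proof of Theorem~\ref{main theorem}. For the crux $\widetilde\Omega_k\subseteq\Lambda_k(T)$, your direct construction --- $T$-diagonalising orthonormal families in $\mathscr{H}_-$ and $\mathscr{H}_+$ separately and then mixing each pair $(u_i,v_i)$ by the intermediate-value choice of $t_i$ --- is more elementary than the route the paper takes for its normal-operator generalisation: there the inclusion $\Omega_k\subseteq\Lambda_k$ proceeds by induction on $k$, appeals to the half-plane description $V_k(T)=\overline{\Lambda_k(T)}$ of Theorem~\ref{geomertic description for operator} to capture the interior, and invokes the present self-adjoint theorem itself to handle the real parts $\Re(e^{i\theta}T)$. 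Your approach avoids both the induction and the external input from \cite{LPS}, at the price of relying on the decomposition $\mathscr{H}_-\oplus\mathscr{H}_0\oplus\mathscr{H}_+$ being $T$-reducing, which is special to the self-adjoint case. The auxiliary fact in the $k=\infty$ branch is fine: the dichotomy ``either the restricted spectrum is infinite, or some eigenvalue has infinite multiplicity'' exhausts the possibilities, and in the first branch an infinite-rank spectral subspace can always be split along countably many disjoint Borel sets of positive spectral measure.
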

In other words, the above theorem states that $\lambda\in\Lambda_k(T)$ if and only if $\text{dim ran}E(-\infty,\lambda]\geq k$ and $\text{dim ran}E[\lambda,\infty)\geq k$ for $k\in\mathbb{N}_{\infty}$ and self-adjoint $T$. We also mention here the fact that the $k$-rank numerical range of an operator on infinite dimensional Hilbert space is always non-empty for $k\in\mathbb{N},$ obtained as a corollary of Theorem \ref{result for selfadjoint operator} (Corollary 3.5, \cite{RM}). There is also an independent proof of the above fact in \cite{LPS2}. However, $\infty$-rank numerical range of an operator could be empty (cf. \cite{LPS}, \cite{RM}).

Li, Poon and Sze extended (\ref{geometric description}) for any operator. Suppose $T\in\mathscr{B\mathscr{(H)}}$ and $k\in\mathbb{N}$. Let
$$V_k(T):=\bigcap\limits_{\xi\in[0,2\pi)}\left\{\mu\in\mathbb{C}:\Re({e^{i\xi}\mu})\leq\lambda_k(\Re({e^{i\xi}T}))\right\}$$
where $$\lambda_k(H):=\sup\left\{\lambda_k(V^*HV): V:\mathbb{C}^k\rightarrow\mathscr{H}\text{ such that } V^*V=I\right\}$$ for some selfadjoint operator $H\in\mathscr{B(\mathscr{H})}$. Let $\text{\textbf{Int}}(S)$ denote relative interior of $S$ and $\overline{S}$ denote the closure of $S$ where $S\subseteq\mathbb{C}$. Li and etc. showed the following.

\begin{theorem}[Theorem 2.1, \cite{LPS}]\label{geomertic description for operator}
Let $T\in\mathscr{B(\mathscr{H})}$ and $k\in\mathbb{N}$. Then
$$\text{\textbf{Int}}\left(V_k(T)\right)\subseteq\Lambda_k(T)\subseteq V_k(T)=\overline{\Lambda_k(T)}.$$
\end{theorem}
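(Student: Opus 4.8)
The plan is to split the displayed chain into its three constituent relations and to observe that, via convexity, everything reduces to the single inclusion $\mathbf{Int}\bigl(V_k(T)\bigr)\subseteq\Lambda_k(T)$. First I would record the trivial half-plane bound: if $\lambda\in\Lambda_k(T)$, take a rank-$k$ projection $P$ with $PTP=\lambda P$; compressing $\Re(e^{i\xi}T)$ to $\operatorname{ran}P$ produces the scalar matrix $\Re(e^{i\xi}\lambda)I_k$, so $\lambda_k(\Re(e^{i\xi}T))\ge\Re(e^{i\xi}\lambda)$ for every $\xi$, i.e.\ $\lambda\in V_k(T)$. Since each defining set $\{\mu:\Re(e^{i\xi}\mu)\le\lambda_k(\Re(e^{i\xi}T))\}$ is a closed half-plane, $V_k(T)$ is closed and convex, whence $\overline{\Lambda_k(T)}\subseteq V_k(T)$. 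For the equality $V_k(T)=\overline{\Lambda_k(T)}$ I would use that a nonempty closed convex subset of $\mathbb{C}$ is the closure of its (nonempty) relative interior, and that the segment joining a relative-interior point to any point of the set lies, minus one endpoint, in the relative interior; granting $\mathbf{Int}(V_k(T))\subseteq\Lambda_k(T)$, a limiting argument then forces $V_k(T)\subseteq\overline{\Lambda_k(T)}$. (Here $\Lambda_k(T)\ne\varnothing$ for $k\in\mathbb{N}$, so $V_k(T)\ne\varnothing$.)

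The substance is the inclusion $\mathbf{Int}(V_k(T))\subseteq\Lambda_k(T)$, and my approach would be to push the problem down to a finite-dimensional compression and then quote the matrix result \eqref{geometric description} (for which the two meanings of $\lambda_k$ coincide, so it reads $V_k(A)=\Lambda_k(A)$ for $A\in M_n$). After translating I may assume $0\in\mathbf{Int}(V_k(T))$, so in the nondegenerate case there is $\delta>0$ with $\lambda_k(\Re(e^{i\xi}T))\ge\delta$ for all $\xi$. Using that $\xi\mapsto\Re(e^{i\xi}T)$ is norm-continuous and $H\mapsto\lambda_k(H)$ is $1$-Lipschitz, I would pick finitely many directions $\xi_1,\dots,\xi_m$ dense enough to control $\lambda_k(\Re(e^{i\xi}T))$ between samples, and for each $j$ extract, from the supremum defining $\lambda_k$, a $k$-dimensional subspace $\mathscr{M}_j$ on which the compression of $\Re(e^{i\xi_j}T)$ has $k$-th eigenvalue $>\delta/2$. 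Setting $\mathscr{K}=\operatorname{span}(\mathscr{M}_1\cup\dots\cup\mathscr{M}_m)$ (finite-dimensional) and $A=P_{\mathscr{K}}TP_{\mathscr{K}}|_{\mathscr{K}}$, monotonicity of $\lambda_k$ under enlarging the compression space together with the density of the $\xi_j$ should give $\lambda_k(\Re(e^{i\xi}A))>0$ for every $\xi$, hence $0\in V_k(A)=\Lambda_k(A)$; a rank-$k$ projection $Q$ inside $\mathscr{K}$ with $QAQ=0$ then also satisfies $QTQ=0$ because inner products of vectors in $\mathscr{K}$ are unchanged by the compression, so $0\in\Lambda_k(T)$.

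The step I expect to be the main obstacle is precisely the uniformity in that last argument: a priori each direction $\xi$ supplies its own near-optimal $k$-dimensional subspace, and one must control, for all $\xi$ simultaneously, the $k$-th eigenvalue of the compression of $\Re(e^{i\xi}T)$ to a single finite-dimensional $\mathscr{K}$ — this is where compactness of the circle and the norm-stability of $\lambda_k$ are indispensable, and making the estimate genuinely uniform (not merely on the sample directions) is the delicate point. A secondary difficulty is the degenerate case in which $V_k(T)$ is a point or a segment, so that $\mathbf{Int}$ must be read as relative interior and there is no room to translate a disc inside; there I would fall back on non-emptiness of $\Lambda_k(T)$ (Corollary 3.5, \cite{RM}) and its convexity \cite{HW}, analysing separately the one or two ``extreme'' directions that pin the segment. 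Everything else — the half-plane bound, the convexity reduction for the equality, and the appeal to the finite-dimensional theorem — I expect to be routine.
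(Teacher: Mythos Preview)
The paper does not supply its own proof of this statement; it is quoted from \cite{LPS} (Theorem~2.1 there) and invoked as a black box in the proof of Theorem~\ref{main theorem}. There is therefore nothing in the present paper to compare your proposal against.

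That said, your outline is essentially the argument of \cite{LPS}: the inclusion $\Lambda_k(T)\subseteq V_k(T)$ is precisely the compression bound you describe, and the substantive inclusion $\mathbf{Int}(V_k(T))\subseteq\Lambda_k(T)$ is obtained there by the same mechanism---compactness of $[0,2\pi)$, norm-continuity of $\xi\mapsto\Re(e^{i\xi}T)$, Lipschitz stability of $\lambda_k$, and reduction to the finite-dimensional identity \eqref{geometric description} on a compression built from finitely many near-optimal $k$-dimensional subspaces. You have also correctly identified the two genuine sticking points: the uniformity needed to pass from finitely many sample directions to all $\xi$, and the degenerate case in which $V_k(T)$ is a segment or a point so that no open disc fits inside and the $\delta>0$ margin is unavailable. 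Your fallback for the point case (non-emptiness of $\Lambda_k(T)$ forces $\Lambda_k(T)=V_k(T)$) is complete; the segment case does require the separate treatment you anticipate.
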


In the same article, Li and etc. have also obtained that $\Lambda_{\infty}(T)=\bigcap\limits_{k\geq 1}\Lambda_k(T)$ (Theorem 5.1, \cite{LPS}) for $T\in\mathscr{B(\mathscr{H})}$. For more details on the higher rank numerical range of operators on infinite dimensional Hilbert space, the reader may look at \cite{HG}, \cite{LPS}, \cite{RM}.

In this paper, we describe the higher rank numerical range of a normal operator on an infinite dimensional Hilbert space in terms of the spectral measure associated with it (Theorem \ref{main theorem}) which extends (\ref{CKZ conjecture}). An analogous description of the numerical range of normal operators by Durszt \cite{D} is derived for the higher rank numerical ranges as an immediate corollary (Corollary \ref{analogous description}). It has several other applications. (Corollary \ref{compute numerical range}, Corollary \ref{necessary condition for infinity numerical range being empty}). An elementary alternative proof of Theorem 4.4 \cite{LPS} is also given (Corollary \ref{alternative proof}). The description of the higher rank numerical range of the bilateral shift is immediate using the main theorem of this paper (Example \ref{bilateral shift}).

As an application of the main theorem of this paper, we show using Durszt's \cite{D} example that there exists a normal contraction $T$ for which the higher rank numerical range of all unitary dilations of $T$ contains the higher rank numerical range of $T$ as a proper subset (Theorem \ref{generalising Durszt's example}). We strengthen and generalize a result of Wu \cite{W} by providing a necessary and sufficient condition on a normal contraction $T$ that its higher rank numerical range equals the intersection of the higher rank numerical ranges of unitary dilations of $T$ (Theorem \ref{generalizing Wu's theorem}). Finally, we propose a necessary and sufficient condition on a contraction $T$ acting on an infinite dimensional Hilbert space that its higher rank numerical range equals the intersection of the higher rank numerical ranges of unitary dilations of $T$ (Conjecture \ref{NASC for Halmos conjecture to be true}).

\section{Preliminaries}

We, throughout this paper, denote by $\mathscr{H}$, an infinite dimensional separable Hilbert space and $\mathscr{B(\mathscr{H})}$, the algebra of all bounded linear maps on $\mathscr{H}.$ Set $\mathbb{N}_{\infty}:=\mathbb{N}\cup\{\infty\}$. For $T\in\mathscr{B(\mathscr{H})}$, we denote by $\sigma_e(T)$, the eigen-spectrum of $T$ and $\sigma(T)$, the spectrum of $T$.  Let us denote $\mathbb{D}:=\{z\in\mathbb{C}:\vert z\vert<1\}$ and $\mathbb{T}:=\{z\in\mathbb{C}:\vert z\vert=1\}$. We will also continue to use $S^\circ$, the topological interior of $S$ and $\delta S$, the boundary of $S$ for $S\subseteq\mathbb{C}$.

Let $k\in\mathbb{N}_{\infty}$. If the dimension of a subspace is less than $k$ then we mean that the dimension of that subspace is at most $k-1$ whenever $k$ is finite and same is finite whenever $k$ is infinite.

Let us start with few basic lemmas.

\begin{lemma}\label{partition of disjoint set}
Let $X$ be a topological space and $S\subseteq X$. Then $\overline{S}=S^\circ\cup\delta S.$
\end{lemma}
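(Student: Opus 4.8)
The plan is to prove the two inclusions $\overline{S}\subseteq S^\circ\cup\delta S$ and $S^\circ\cup\delta S\subseteq\overline{S}$ separately, working directly from the definitions of interior, closure and boundary in an arbitrary topological space; no special structure of $X$ is needed.

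For the inclusion $S^\circ\cup\delta S\subseteq\overline{S}$ I would simply observe that $S^\circ\subseteq S\subseteq\overline{S}$, and that by definition $\delta S=\overline{S}\cap\overline{X\setminus S}$, so in particular $\delta S\subseteq\overline{S}$. Taking the union of these two facts gives the inclusion. This direction is immediate and requires no work.

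For the reverse inclusion $\overline{S}\subseteq S^\circ\cup\delta S$, let $x\in\overline{S}$ and suppose $x\notin S^\circ$. Since $x$ is not an interior point of $S$, no open neighbourhood of $x$ is contained in $S$; equivalently, every open neighbourhood of $x$ meets $X\setminus S$, that is, $x\in\overline{X\setminus S}$. Combining this with the hypothesis $x\in\overline{S}$ gives $x\in\overline{S}\cap\overline{X\setminus S}=\delta S$, hence $x\in S^\circ\cup\delta S$. This establishes the claim.

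There is no genuine obstacle here; the only thing to be careful about is to fix at the outset the characterisation of the boundary as $\delta S=\overline{S}\cap\overline{X\setminus S}$ (equivalently $\delta S=\overline{S}\setminus S^\circ$, using $\overline{X\setminus S}=(S^\circ)^{c}$) and to use it consistently, after which both inclusions drop out. In fact, once that identity is in hand, the whole statement collapses to the one-line computation $S^\circ\cup\delta S=S^\circ\cup(\overline{S}\setminus S^\circ)=S^\circ\cup\overline{S}=\overline{S}$, the last equality because $S^\circ\subseteq\overline{S}$.
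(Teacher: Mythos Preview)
Your proof is correct and follows essentially the same approach as the paper: both argue that if $x\in\overline{S}\setminus S^\circ$ then every open neighbourhood of $x$ meets $S^c$, hence $x\in\delta S$, and both dismiss the reverse inclusion as immediate. Your use of the identity $\delta S=\overline{S}\cap\overline{X\setminus S}$ (and the resulting one-line computation) is a slight repackaging, but the underlying argument is the same.
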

\begin{proof}
Let $x\in\overline{S}$ such that $x\notin S^\circ$. Suppose $U$ be an open set containing $x$. As $x\in\overline{S}$, we have $S\cap U\neq\emptyset$. Now, we claim that $U\cap S^c\neq\emptyset$. If possible let $U\cap S^c=\emptyset$. Then $U\subseteq S$ which contradicts that $x\notin S^\circ$. So, $U\cap S^c\neq\emptyset$. Hence $x\in\delta S$. The other containment is immediate.
\end{proof}

\begin{lemma}\label{separating a convex set}
Let $S$ be a convex set in $\mathbb{C}$ and $\lambda\notin S$. Then there exists a closed half plane $H$ with the line $\delta H$ passing through $\lambda$ such that $H$ contains $S$.
\end{lemma}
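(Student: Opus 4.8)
The statement is the supporting/separating hyperplane theorem in the plane, and the plan is to prove it from scratch by nearest-point projection, matching the elementary style of this section. If $S=\emptyset$ the claim is vacuous, so assume $K:=\overline{S}$ is a nonempty closed convex subset of $\mathbb{C}$; since $S\subseteq K$ it suffices to produce a closed half-plane $H$ whose boundary line contains $\lambda$ with $K\subseteq H$. Throughout I use the real inner product $\langle z,w\rangle=\Re(z\bar w)$ on $\mathbb{C}\cong\mathbb{R}^2$, so that a closed half-plane having $\lambda$ on its boundary is exactly a set of the form $H_v=\{z:\langle z-\lambda,v\rangle\ge 0\}$ for some unit vector $v$.

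\textbf{Case 1: $\lambda\notin K$.} Intersecting $K$ with a sufficiently large closed disc around $\lambda$ gives a nonempty compact convex set on which $z\mapsto\|z-\lambda\|$ attains its infimum over $K$, say at $p\in K$, and $p\ne\lambda$. For each $z\in K$ and $t\in[0,1]$ convexity gives $p+t(z-p)\in K$, and comparing the value of $t\mapsto\|p+t(z-p)-\lambda\|^2$ at the minimizing point $t=0$ with small $t>0$ yields the variational inequality $\langle z-p,\,p-\lambda\rangle\ge 0$. Setting $v:=(p-\lambda)/\|p-\lambda\|$, we get for every $z\in K$
$$\langle z-\lambda,v\rangle=\langle z-p,v\rangle+\|p-\lambda\|\ge\|p-\lambda\|>0,$$
so $H:=H_v$ is a closed half-plane with $\lambda\in\delta H$ and $K\subseteq H$.

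\textbf{Case 2: $\lambda\in K\setminus S$.} Here I would invoke the standard fact that $(\overline{S})^\circ=S^\circ$ for a convex set $S$, so that $\lambda\notin S$ forces $\lambda\notin K^\circ$, i.e. $\lambda\in\delta K$; hence there is a sequence $\lambda_n\to\lambda$ with $\lambda_n\notin K$. The Case 1 construction applied to each $\lambda_n$ yields unit vectors $v_n$ with $\langle z-\lambda_n,v_n\rangle\ge 0$ for all $z\in K$. Since the unit circle is compact, a subsequence $v_{n_j}\to v$ with $\|v\|=1$, and letting $j\to\infty$ gives $\langle z-\lambda,v\rangle\ge 0$ for all $z\in K$; then $H:=H_v$ is again the desired half-plane.

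The only genuinely delicate point is Case 2: the hypothesis $\lambda\notin S$ does not by itself exclude $\lambda\in\overline{S}$, so the projection argument cannot be applied directly to $\lambda$, and one must pass to the boundary of $\overline{S}$ and take a limit of separating directions. (Alternatively, the lemma follows in one stroke from the hyperplane-separation theorem for two disjoint nonempty convex subsets of $\mathbb{R}^n$: there exist a nonzero linear functional $f$ and $c\in\mathbb{R}$ with $f(\lambda)\le c\le f(s)$ for all $s\in S$, whence $f(\lambda)\le f(s)$ for all $s\in S$ and $H=\{z:f(z)\ge f(\lambda)\}$ works; I would nonetheless keep the self-contained projection proof to stay within the spirit of this preliminary section.)
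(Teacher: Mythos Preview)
Your argument is mathematically sound, but it differs substantially from the paper's own treatment: the paper's proof is a single sentence invoking the classical separation theorem for convex sets, with no further detail. Your nearest-point projection argument is a genuine, self-contained proof of that separation result in the plane, which is more elementary in the sense that it does not presuppose the Hahn--Banach/separation machinery.

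There is one structural issue worth flagging. In Case~2 you invoke the ``standard fact'' $(\overline{S})^\circ=S^\circ$ for convex $S$. In this paper that identity is precisely Lemma~\ref{closure interior of a convex set}, which is stated \emph{after} the present lemma and whose proof \emph{uses} the present lemma. So within the paper's logical order your Case~2 would be circular. If you want to keep the projection proof self-contained and compatible with the paper, you should either (i) prove $(\overline{S})^\circ=S^\circ$ independently first (e.g.\ via the standard ``line segment from an interior point'' argument, which does not require separation), or (ii) bypass it by arguing directly that $\lambda\notin S$ with $S$ convex forces $\lambda\in\partial\overline{S}$ whenever $\lambda\in\overline{S}$, using only the segment trick. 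Alternatively, as you yourself note in the final paragraph, simply citing the hyperplane separation theorem---exactly what the paper does---avoids the issue entirely.
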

\begin{proof}
It is immediate using separation theorem of convex set by half plane.
\end{proof}

\begin{lemma}\label{closure interior of a convex set}
Let $S$ be a convex set in $\mathbb{C}$. Then $(\overline{S})^\circ=S^\circ$.
\end{lemma}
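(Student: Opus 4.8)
The plan is to establish the two inclusions separately. The inclusion $S^\circ\subseteq(\overline{S})^\circ$ is immediate from monotonicity of the interior operation together with $S\subseteq\overline{S}$, and uses no convexity.

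For the reverse inclusion $(\overline{S})^\circ\subseteq S^\circ$ I would distinguish two cases. If $S^\circ=\emptyset$, I would invoke the elementary structural fact that a convex subset of $\mathbb{C}\cong\mathbb{R}^2$ with empty interior cannot contain three affinely independent points -- otherwise it would contain a non-degenerate triangle, which has interior points -- and hence lies on an affine line $\ell$. Then $\overline{S}\subseteq\overline{\ell}=\ell$, and since $\ell$ has empty interior in $\mathbb{C}$ we conclude $(\overline{S})^\circ=\emptyset=S^\circ$.

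If $S^\circ\neq\emptyset$, fix $a\in S^\circ$ and let $x\in(\overline{S})^\circ$ be arbitrary. Since $x$ is interior to $\overline{S}$, the point $y:=x+t(x-a)$ lies in $\overline{S}$ for all sufficiently small $t>0$, and then $x=\tfrac{1}{1+t}\,y+\tfrac{t}{1+t}\,a$ is a proper convex combination of $y\in\overline{S}$ and $a\in S^\circ$. So everything reduces to the standard half-open segment lemma: if $a\in S^\circ$ and $y\in\overline{S}$, then $(1-\lambda)a+\lambda y\in S^\circ$ for every $\lambda\in[0,1)$. I would prove this by choosing $r>0$ with $B(a,r)\subseteq S$, picking $y'\in S$ with $\lVert y-y'\rVert$ arbitrarily small, and noting that $(1-\lambda)B(a,r)+\lambda y'=B\big((1-\lambda)a+\lambda y',(1-\lambda)r\big)\subseteq S$ by convexity; since the center of this ball is within $\lambda\lVert y-y'\rVert$ of $(1-\lambda)a+\lambda y$, choosing $y'$ close enough to $y$ places a genuine ball around $(1-\lambda)a+\lambda y$ inside $S$.

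The only place where any real care is needed is the degenerate case $S^\circ=\emptyset$, where one must use the flatness of lower-dimensional convex sets; the non-degenerate case is the classical segment argument and is entirely routine.
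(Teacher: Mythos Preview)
Your proof is correct, but the route differs from the paper's. You use the classical ``half-open segment'' lemma: fix $a\in S^\circ$, push $x\in(\overline{S})^\circ$ slightly past itself along the ray from $a$ to land at $y\in\overline{S}$, and then conclude $x\in S^\circ$ because proper convex combinations of an interior point with a closure point stay interior. This is self-contained and avoids any appeal to separation, at the price of the case split $S^\circ=\emptyset$ versus $S^\circ\neq\emptyset$.

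The paper instead argues by contradiction via its separation lemma (Lemma~\ref{separating a convex set}): if $\lambda\in(\overline{S})^\circ\setminus S^\circ$, pick a nearby $\mu\in(\overline{S})^\circ\setminus S$, separate $\mu$ from $S$ by a closed half-plane $H$, and observe that a small open set $D\subseteq(\overline{S})^\circ$ on the $H^c$ side consists entirely of boundary points of $S$; but then $D$, being open, must meet $S$, contradicting $S\subseteq H$. This argument is uniform (no case distinction on $S^\circ$) and ties in with the half-plane machinery used throughout the paper, but it imports the separation theorem, which ultimately rests on similar convexity ideas to your segment lemma. Each approach is standard; yours is arguably the more elementary of the two.
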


\begin{proof}
Let $\lambda\in(\overline{S})^\circ$. Then there exists an open ball $B$ containing $\lambda$ such that $B\subseteq \overline{S}$. Let $\lambda\notin S^\circ$. Then by Lemma \ref{partition of disjoint set}, $\lambda\in\delta S$. So, $B\cap S^c\neq\emptyset$. Let $\mu\in B\cap S^c$. As $\mu\notin S$ and $S$ is a convex set, there exists a closed half plane $H$ with the line $\delta H$ passing through $\mu$ such that $S\subseteq H$ by Lemma \ref{separating a convex set}. Again $\mu\in(\overline{S})^\circ$. Then there exists an open ball $C$ containing $\mu$ such that $C\subseteq\overline{S}$. Take $D=C\cap H^c$. Note that $D$ is non empty (as $\mu$ lies on $\delta H$), open set and contains boundary point(s) of $S$ as $D=C\cap H^c\subseteq \overline{S}\cap H^c=\delta S\cap H^c$. So, $D\cap S\neq\emptyset$ which is a contradiction. Hence $\lambda\in S^\circ$. The other containment is trivial.
\end{proof}

\begin{lemma}[Lemma, \cite{D}]\label{positive measure}
Let $S$ be a convex set in the complex plane and $\mu$ be a non-negative Borel measure defined on $S$ such that $\mu(S)=1$. Then $\int\limits_{S}zd\mu\in S$.
\end{lemma}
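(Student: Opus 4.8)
The plan is to argue by contradiction, reducing everything to a one–dimensional computation after forcing $\mu$ onto a supporting line of $S$. So I would suppose that $b:=\int_S z\,d\mu\notin S$ (the integral being assumed to exist, as the statement presupposes; in Durszt's applications $S$ is bounded, so this is automatic). Since $S$ is convex and $b\notin S$, Lemma~\ref{separating a convex set} yields a closed half-plane $H\supseteq S$ whose bounding line $L:=\delta H$ passes through $b$. I would then fix a unit vector $w\in\mathbb{C}$ normal to $L$ with $H$ on its positive side, so that $H=\{z:\Re(\bar w(z-b))\ge 0\}$ and $L=\{z:\Re(\bar w(z-b))=0\}$, and set $f(z):=\Re(\bar w(z-b))$; then $f\ge 0$ on $S$.

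The key step is the identity $\int_S f\,d\mu=\Re\bigl(\bar w\bigl(\int_S z\,d\mu-b\,\mu(S)\bigr)\bigr)=\Re(\bar w(b-b))=0$, which uses $\mu(S)=1$, the $\mathbb{C}$-linearity of the integral, and the fact that $\Re$ commutes with integration against the nonnegative measure $\mu$. Because $f\ge 0$ and $\mu\ge 0$, a vanishing integral forces $f=0$ $\mu$-almost everywhere, i.e.\ $\mu(S\setminus L)=0$; hence $\mu$ is concentrated on the convex set $C:=S\cap L$, so that $b=\int_C z\,d\mu$ while $b\notin C$. Transporting $L$ onto $\mathbb{R}$ by an affine isometry turns $C$ into an interval $J$, $\mu$ into a probability measure $\nu$ on $J$, and $b$ into its barycenter $t_0=\int_J t\,d\nu\notin J$. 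Writing $\alpha=\inf J$ and $\beta=\sup J$ (possibly infinite), one has $\alpha\le t_0\le\beta$; moreover $t_0=\alpha$ would force $t=\alpha$ for $\nu$-a.e.\ $t$ and hence, since $\nu(J)=1$, $\alpha\in J$, and symmetrically at $\beta$. In every case $t_0\in J$, contradicting $t_0\notin J$, and therefore $b\in S$.

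I expect everything through the vanishing-integral step to be routine manipulation; the one point that needs a little care is the one-dimensional base case, namely pinning down exactly when the barycenter $t_0$ equals an endpoint of $J$ and checking that such an endpoint then lies in $J$ (which is where the normalization $\mu(S)=1$ is used a second time, to rule out mass escaping to an excluded endpoint). It is worth noting that the argument never needs $S$ to be closed: Lemma~\ref{separating a convex set} already places the separating line through $b$ itself, so no approximation by interior points is required.
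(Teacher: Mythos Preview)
The paper does not supply a proof of this lemma; it is quoted verbatim from Durszt \cite{D} and used as a black box in the proof of Theorem~\ref{main theorem}. So there is nothing in the present paper to compare your argument against.

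Your argument itself is correct. Separating $b$ from $S$ by a line through $b$ via Lemma~\ref{separating a convex set}, integrating the affine functional $f(z)=\Re(\bar w(z-b))$ to force $\mu$ onto the line, and then handling the residual one-dimensional case is the standard route to this barycenter lemma. The only place one has to be slightly careful is exactly where you flag it: when $t_0$ coincides with an endpoint $\alpha$ (or $\beta$) of the interval $J$, the identity $\int_J(t-\alpha)\,d\nu=0$ together with $\nu(J)=1$ indeed forces $\alpha\in J$, since a probability measure cannot be supported on the empty set. For $\alpha<t_0<\beta$ you use that any interval with those endpoints contains the open segment $(\alpha,\beta)$, which is immediate from convexity. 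All steps are sound.
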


\begin{lemma}\label{SOT limit of projections}
Let $k\in\mathbb{N}$ and $\{P_n\}_{n=1}^\infty$ be a sequence of projection with $\text{dim ran}P_n<k$ for all $n$ such that $P_n\xrightarrow{\text{SOT}}P$ where $P$ is a projection. Then $\text{dim ran}P<k$.
\end{lemma}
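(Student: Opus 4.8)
The plan is to argue by contradiction: assume $\dim\operatorname{ran}P\ge k$ and exhibit $k$ linearly independent vectors lying in $\operatorname{ran}P_n$ for some large $n$, which is impossible since $\dim\operatorname{ran}P_n<k$, i.e. $\dim\operatorname{ran}P_n\le k-1$.

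First I would choose an orthonormal set $\{e_1,\dots,e_k\}\subseteq\operatorname{ran}P$, so that $Pe_j=e_j$ for each $j\in\{1,\dots,k\}$. Since $P_n\xrightarrow{\text{SOT}}P$, applying the definition of the strong operator topology to each of the finitely many fixed vectors $e_1,\dots,e_k$ gives $\|P_ne_j-e_j\|=\|P_ne_j-Pe_j\|\to 0$ as $n\to\infty$. Consequently the Gram matrices $G_n:=\big(\langle P_ne_i,P_ne_j\rangle\big)_{i,j=1}^{k}$ converge entrywise to $\big(\langle e_i,e_j\rangle\big)_{i,j=1}^{k}=I_k$.

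Next I would invoke continuity of the determinant: $\det G_n\to\det I_k=1$, so there is an index $N$ with $\det G_N\ne 0$. Since the Gram determinant of a finite family of vectors in a Hilbert space is nonzero exactly when the family is linearly independent, the vectors $P_Ne_1,\dots,P_Ne_k$ are linearly independent. But each of them lies in $\operatorname{ran}P_N$, so $\dim\operatorname{ran}P_N\ge k$, contradicting the hypothesis. Hence $\dim\operatorname{ran}P<k$.

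There is no real obstacle here; the only points needing a line of justification are the passage from SOT convergence of the $P_n$ to norm convergence of the images $P_ne_j$ (immediate, as above) and the standard equivalence between non-vanishing of the Gram determinant and linear independence. If one prefers to avoid Gram determinants, the same step can be carried out by noting that for $N$ large one has $\sum_{j=1}^{k}\|P_Ne_j-e_j\|^2<1$, which forces $\{P_Ne_1,\dots,P_Ne_k\}$ to be linearly independent whenever $\{e_1,\dots,e_k\}$ is orthonormal, again yielding the contradiction.
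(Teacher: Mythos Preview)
Your proof is correct and follows essentially the same approach as the paper: both argue by contradiction, pick an orthonormal set $\{e_1,\dots,e_k\}$ in $\operatorname{ran}P$, observe that the matrices $\big(\langle P_ne_i,e_j\rangle\big)_{i,j}$ converge to $I_k$ (note that since $P_n$ is a projection, your Gram matrix $G_n$ coincides with this matrix up to conjugate-transpose), and conclude that for some $N$ the vectors $P_Ne_1,\dots,P_Ne_k$ are linearly independent. The only cosmetic difference is that the paper invokes openness of the invertible matrices where you invoke continuity of the determinant.
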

\begin{proof}
Let $\text{dim ran}P\geq k$. Suppose $\{e_r:1\leq r\leq k\}$ be an orthonormal set in $\text{ran}P$. Let $A_n=\left(\langle P_n e_j,e_i\rangle\right)_{i,j=1}^k\in M_k$. As $P_n\xrightarrow{\text{SOT}}P$, we have $\lim\limits_{n\to\infty}P_n e_r=P e_r=e_r$ for all $r\in\{1,\cdots,k\}$. So, $A_n\rightarrow I_k$ in norm. Since the set of all invertible matrices is open, there exists $n_{\circ}\in\mathbb{N}$ such that $A_{n_{\circ}}$ is invertible which implies $\{P_{n_{\circ}}e_r:1\leq r\leq k\}$ is linearly independent. It contradicts the fact that $\text{dim ran}P_n<k$ for all $n$.
\end{proof}

\begin{remark}
Let $\mathscr{H}$ be an infinite dimensional Hilbert space with orthonormal basis $\{e_r:r\geq 1\}$. Suppose $P_n$ be projection on $\text{span}\{e_1,e_1,\cdots,e_n\}$. Then $P_n\xrightarrow{\text{SOT}}I$ where $I$ is the identity operator on $\mathscr{H}$. Note that $\text{dim ran}P_n$ is finite for all $n$ but $\text{ran}E(I)=\mathscr{H}$ is infinite dimensional. It shows that the above lemma does not hold for $k=\infty$.
\end{remark}

There are various versions of the spectral theorem. We will use the following one.

\begin{theorem}(Spectral Theorem)
Let $T\in\mathscr{B\mathscr{(H)}}$ be normal. Then there exists a unique spectral measure $E$ defined on the Borel subsets of $\mathbb{C}$ and supported on $\sigma(T)$ such that $$T=\int_{\mathbb{C}}x\;dE(x)$$  
Furthermore, every projection $E(\omega)$ commutes with every $S\in\mathscr{B{\mathscr(H)}}$ which commutes with $T$ where $\omega$ is a Borel set in $\mathbb{C}$.
\end{theorem}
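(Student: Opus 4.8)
The plan is to obtain the spectral measure from the continuous functional calculus together with the Riesz representation theorem. First I would pass to the unital $C^*$-subalgebra $\mathscr{A}\subseteq\mathscr{B(\mathscr{H})}$ generated by $T$, $T^*$ and $I$; since $T$ is normal this algebra is commutative, and the Gelfand transform gives an isometric $*$-isomorphism of $\mathscr{A}$ onto $C(\sigma(T))$ under which $T$ corresponds to the coordinate function $\iota(z)=z$. Writing $\phi\mapsto\phi(T)$ for the inverse of this isomorphism yields the continuous functional calculus: a unital, multiplicative, $*$-preserving, isometric, positivity-preserving map $C(\sigma(T))\to\mathscr{A}$ with $\iota(T)=T$.

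Next I would promote this to a bounded Borel functional calculus. For fixed $f,g\in\mathscr{H}$ the functional $\phi\mapsto\langle\phi(T)f,g\rangle$ is linear on $C(\sigma(T))$ and bounded by $\Vert f\Vert\,\Vert g\Vert$, so by the Riesz representation theorem there is a unique regular complex Borel measure $\mu_{f,g}$ on $\sigma(T)$ with $\langle\phi(T)f,g\rangle=\int\phi\,d\mu_{f,g}$, which I extend by $0$ to all Borel subsets of $\mathbb{C}$. For each Borel set $\omega$ the form $(f,g)\mapsto\mu_{f,g}(\omega)$ is sesquilinear and bounded by $1$, hence equals $\langle E(\omega)f,g\rangle$ for a unique operator $E(\omega)\in\mathscr{B(\mathscr{H})}$ with $\Vert E(\omega)\Vert\leq1$. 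Verifying that $\omega\mapsto E(\omega)$ is a spectral measure is the technical heart of the argument: $E(\omega)^*=E(\omega)$ follows from $\mu_{g,f}=\overline{\mu_{f,g}}$; finite and countable additivity in the strong operator topology follow from the corresponding properties of the scalar measures together with an orthogonality/polarization argument; and the crucial identity $E(\omega_1\cap\omega_2)=E(\omega_1)E(\omega_2)$ — which in particular makes each $E(\omega)$ an orthogonal projection — is deduced from the multiplicativity $\phi(T)\psi(T)=(\phi\psi)(T)$ on continuous functions by two successive bounded-convergence passages, first approximating $\chi_{\omega_1}$ and then $\chi_{\omega_2}$ by continuous functions inside the integrals $\int\phi\psi\,d\mu_{f,g}$.

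With $E$ constructed, the representation $T=\int_{\mathbb{C}}x\,dE(x)$ is read off from $\langle Tf,g\rangle=\langle\iota(T)f,g\rangle=\int x\,d\mu_{f,g}(x)$, which is precisely the weak form of that integral. The support statement is automatic, since each $\mu_{f,g}$ lives on $\sigma(T)$, so $E(\mathbb{C}\setminus\sigma(T))=0$. For uniqueness, any spectral measure $F$ with $\int x\,dF=T$ satisfies $\int p(x,\bar x)\,dF=p(T,T^*)$ for every polynomial $p$, hence $\int\phi\,dF=\phi(T)$ for all $\phi\in C(\sigma(T))$ by Stone--Weierstrass; then the complex measures $\langle F(\cdot)f,g\rangle$ agree with $\mu_{f,g}$ on $C(\sigma(T))$ and therefore coincide by the uniqueness clause of the Riesz theorem, forcing $F=E$.

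Finally, for the commutation assertion, let $S\in\mathscr{B(\mathscr{H})}$ commute with $T$. By Fuglede's theorem $S$ commutes with $T^*$ as well, hence with every polynomial in $T$ and $T^*$, and then, by norm continuity, with every element of $\mathscr{A}$; that is, $S\phi(T)=\phi(T)S$ for all $\phi\in C(\sigma(T))$. Consequently the two complex measures $\omega\mapsto\langle E(\omega)Sf,g\rangle$ and $\omega\mapsto\langle E(\omega)f,S^*g\rangle$ integrate every continuous $\phi$ to the common value $\langle\phi(T)Sf,g\rangle=\langle S\phi(T)f,g\rangle$, so they coincide by uniqueness in the Riesz theorem; evaluating at $\omega$ gives $\langle E(\omega)Sf,g\rangle=\langle SE(\omega)f,g\rangle$ for all $f,g$, i.e.\ $E(\omega)S=SE(\omega)$. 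The steps I expect to be genuinely delicate are the strong-operator countable additivity of $E$ and the multiplicativity $E(\omega_1\cap\omega_2)=E(\omega_1)E(\omega_2)$, the remaining verifications being essentially bookkeeping, together with the clean invocation of Fuglede's theorem in the last part.
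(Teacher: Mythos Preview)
Your outline is correct and follows the standard Gelfand--Riesz route one finds in Rudin's \emph{Functional Analysis}, which is precisely the reference the paper cites. Note, however, that the paper does not prove this theorem at all: it is stated in the Preliminaries section as background, with the sentence ``A good reference for the spectral measure (resolution of identity) and the spectral theorem is \cite{WR},'' and no proof is given. So there is nothing to compare against; you have supplied a proof where the paper simply invokes the result as known.
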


A good reference for the spectral measure (resolution of identity) and the spectral theorem is \cite{WR}. Finally, we will end this section by recalling the following proposition.

\begin{proposition}[Proposition $2.3$, \cite{RM}]\label{b}
Let $T\in\mathscr{B\mathscr{(H)}}$ and $k\in\mathbb{N}_{\infty}$. Then
\begin{align*}
\Lambda_k(T)\subseteq\bigcap\limits_{V\in\mathscr{V}_k} W(V^*TV)
\end{align*}
where $\mathscr{V}_k$ is the set of all isometries $V:\mathscr{H}\rightarrow\mathscr{H}$ such that codimension of ran $V$ is less than $k$.
\end{proposition}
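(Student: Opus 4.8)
The plan is to argue directly by exhibiting, for each admissible isometry $V$, a unit vector realizing $\lambda$ as a diagonal entry of $V^*TV$. So suppose $\lambda\in\Lambda_k(T)$ and fix a projection $P$ of rank $k$ with $PTP=\lambda P$; write $M=\operatorname{ran}P$, so $\dim M=k$. The first observation is that every unit vector $f\in M$ satisfies $\langle Tf,f\rangle=\lambda$: indeed $Pf=f$, hence $\langle Tf,f\rangle=\langle TPf,Pf\rangle=\langle PTPf,f\rangle=\lambda\langle Pf,f\rangle=\lambda$.

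Next I would fix an arbitrary $V\in\mathscr{V}_k$, i.e.\ an isometry $V:\mathscr{H}\to\mathscr{H}$ with $\dim(\operatorname{ran}V)^{\perp}<k$, and aim to produce a unit vector $g$ with $\langle V^*TVg,g\rangle=\lambda$. Setting $f=Vg$, this amounts to finding a unit vector $f\in\operatorname{ran}V$ that also lies in $M$, because then $\|g\|=\|f\|=1$ (as $V$ is an isometry) and $\langle V^*TVg,g\rangle=\langle Tf,f\rangle=\lambda$ by the previous paragraph. Thus the whole proposition reduces to the claim that $M\cap\operatorname{ran}V\neq\{0\}$.

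For that claim I would look at the restriction to $M$ of the orthogonal projection $Q$ onto $(\operatorname{ran}V)^{\perp}$. A vector $f\in M$ lies in $\operatorname{ran}V$ precisely when $Qf=0$, so it suffices to show $Q|_M$ has nontrivial kernel. When $k$ is finite this is the elementary fact that a linear map from a $k$-dimensional space into a space of dimension at most $k-1$ cannot be injective; when $k=\infty$ the space $M$ is infinite dimensional while $(\operatorname{ran}V)^{\perp}$ is finite dimensional, so again $Q|_M$ cannot be injective. Either way we get a nonzero $f\in M\cap\operatorname{ran}V$, which after normalization completes the argument.

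I do not expect a serious obstacle here; the proof is short. The only point requiring a little care is the dimension bookkeeping in the two cases $k\in\mathbb{N}$ and $k=\infty$ (recalling the paper's convention that ``dimension less than $k$'' means ``$\le k-1$'' in the finite case and ``finite'' in the infinite case), and making sure that the vector $g$ with $Vg=f$ exists and is unique, which follows from injectivity of the isometry $V$.
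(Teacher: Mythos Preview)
The paper does not actually supply a proof of this proposition; it is simply recalled from \cite{RM} at the end of the preliminaries section. Your argument is correct and is the standard one: given the rank-$k$ subspace $M=\operatorname{ran}P$ on which every unit vector satisfies $\langle Tf,f\rangle=\lambda$, the codimension hypothesis on $\operatorname{ran}V$ forces $M\cap\operatorname{ran}V\neq\{0\}$ by the elementary dimension count you describe, and pulling back through the isometry $V$ gives the desired unit vector in $W(V^*TV)$. The only minor point worth making explicit is that $\operatorname{ran}V$ is closed (since $V$ is an isometry), so that $\ker(Q|_M)=M\cap\operatorname{ran}V$ rather than merely $M\cap\overline{\operatorname{ran}V}$; you implicitly use this when you invert $V$ on $f$.
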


\section{Higher rank numerical range of a normal operator}

Let us start with the following definition.

\begin{definition}
A subset $H\subseteq\mathbb{C}$ is said to be a half closed-half plane in $\mathbb{C}$ if $H$ can be written as the union of an open half plane $\omega$ and a ray $l$ on the line $\delta\omega$. Moreover, if $\lambda$ be the initial point of $l$ then we sometimes call $H$ as a half closed-half plane at $\lambda$ generated by the line $\delta \omega$. 
\end{definition}

For example, $\{z\in\mathbb{C}:\Im{(z)}>0\}\cup[0,\infty)$ is a half closed-half plane at $0$ generated by the real axis.

Let $T\in\mathscr{B(\mathscr{H})}$ be a normal operator and $E$ be the unique spectral measure associated with $T$ defined on the Borel $\sigma$-algebra in $\mathbb{C}$ supported on $\sigma(T)$.  Suppose
\begin{align}\label{defining S_k}
\mathcal{S}_k:=\left\{H: H\text{ is a half closed-half plane in }\mathbb{C} \text{ and dim ran }E(H)<k \right\}.
\end{align}
Define
\begin{align}
\Omega_k(T):=\bigcap\limits_{H\in\mathcal{S}_k} H^c.
\end{align}
As arbitrary intersection of convex set is always convex, $\Omega_k(T)$ is a convex set in $\mathbb{C}$. It can be checked that $\Omega_k(T)$ is non-empty for $k\in\mathbb{N}$ (cf. Proposition \ref{nonemptyness}). Later, we will also see an example that $\Omega_{\infty}(T)$ could be empty (Example \ref{infinity rank numerical range being empty}).

We are now ready to state the main theorem of this paper. A finite dimensional version of our result is found at \cite{LS} (see also \cite{CKZ2},\cite{GLP}). It also extends the description of the higher rank numerical range of self-adjoint operator acting on an infinite dimensional Hilbert space \cite{RM}.

\begin{theorem}\label{main theorem}
Let $T\in\mathscr{B(\mathscr{H})}$ be a normal operator and $k\in\mathbb{N}_{\infty}$. Then 
\begin{align*}
\Lambda_k(T)=\bigcap\limits_{V\in\mathscr{V}_k}W(V^*TV)=\Omega_k(T)
\end{align*}
where $\mathscr{V}_k$ is the set of all isometries $V:\mathscr{H}\rightarrow\mathscr{H}$ such that codimension of ran $V$ is less than $k$.
\end{theorem}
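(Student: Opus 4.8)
The plan is as follows. Since Proposition \ref{b} gives $\Lambda_k(T)\subseteq\bigcap_{V\in\mathscr V_k}W(V^*TV)$ for free, it is enough to prove the inclusions $\bigcap_{V\in\mathscr V_k}W(V^*TV)\subseteq\Omega_k(T)$ and $\Omega_k(T)\subseteq\Lambda_k(T)$, and then all three sets agree. I will lean on two easy observations. First, $\lambda\in\Omega_k(T)$ if and only if $\dim\operatorname{ran}E(H)\ge k$ for every half closed-half plane $H$ \emph{at} $\lambda$ (one whose ray has initial point $\lambda$): if $\lambda$ belonged to some $H\in\mathcal S_k$, a half closed-half plane at $\lambda$ could be slid inside $H$ without increasing $\dim\operatorname{ran}E(\cdot)$. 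Second, $\Omega_k(T)\subseteq V_k(T)$ for $k\in\mathbb N$: if $\Re(e^{i\xi}\mu)>\lambda_k(\Re(e^{i\xi}T))$, then $\dim\operatorname{ran}E(\{z:\Re(e^{i\xi}z)\ge\Re(e^{i\xi}\mu)\})<k$, so a half closed-half plane at $\mu$ lying in that closed half-plane is in $\mathcal S_k$.

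For the first inclusion I would argue by contraposition. Given $\lambda\notin\Omega_k(T)$, fix a half closed-half plane $H=\omega\cup l$ at $\lambda$ with $\dim\operatorname{ran}E(H)<k$. The inclusion $V$ of the reducing subspace $\operatorname{ran}E(H^c)$ into $\mathscr H$ then lies in $\mathscr V_k$, since its range has codimension $\dim\operatorname{ran}E(H)<k$, and $V^*TV=T|_{\operatorname{ran}E(H^c)}$ is normal with spectral measure concentrated on $H^c$. The crucial geometric point is that $H^c$ — the open half-plane opposite $\omega$ together with the open ray on $\delta\omega$ opposite $l$ — is a convex set that does not contain $\lambda$ (because $\lambda$ is the initial point of $l$, so $\lambda\in l\subseteq H$). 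By Lemma \ref{positive measure}, for every unit vector $f$ the value $\langle V^*TVf,f\rangle=\int_{H^c}z\,d\langle E(z)f,f\rangle$ lies in $H^c$, so $\lambda\notin W(V^*TV)$ and hence $\lambda\notin\bigcap_{V\in\mathscr V_k}W(V^*TV)$.

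For the second inclusion I would translate so that $0\in\Omega_k(T)$ and try to build a rank-$k$ projection $P$ with $PTP=0$. The case $k=\infty$ should reduce to the finite ones, as $\Omega_\infty(T)=\bigcap_{k\in\mathbb N}\Omega_k(T)$ follows directly from the definitions while $\Lambda_\infty(T)=\bigcap_{k\in\mathbb N}\Lambda_k(T)$ is Theorem~5.1 of \cite{LPS}. So take $k\in\mathbb N$. If $\dim\operatorname{ran}E(\{0\})\ge k$ one is done at once, projecting onto a $k$-dimensional subspace of $\ker T$. If $c:=\dim\operatorname{ran}E(\{0\})<k$, then (as the eigenvectors of the normal operator $T$ at $0$ span a reducing subspace) replacing $T$ by $T|_{\operatorname{ran}E(\mathbb C\setminus\{0\})}$ and $k$ by $k-c$ reduces matters to the case $E(\{0\})=0$: a rank-$(k-c)$ projection on $\operatorname{ran}E(\mathbb C\setminus\{0\})$ annihilating the compression, together with a $c$-dimensional subspace of the original $\ker T$, assembles the desired rank-$k$ projection, the cross terms vanishing because $T(\ker T)=0$ and $T$ leaves $\operatorname{ran}E(\mathbb C\setminus\{0\})$ invariant.

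It remains to handle $E(\{0\})=0$, $0\in\Omega_k(T)$, $k\in\mathbb N$. If $0$ is in the relative interior of $\Omega_k(T)$, I would be finished: using $\Omega_k(T)\subseteq V_k(T)\subseteq\overline{\Omega_k(T)}$ (the last inclusion from the first inclusion above and Proposition \ref{b}) and convexity (cf. Lemma \ref{closure interior of a convex set}), $\operatorname{relint}\Omega_k(T)=\operatorname{relint}V_k(T)\subseteq\Lambda_k(T)$ by Theorem \ref{geomertic description for operator}. The hard part is $0$ on the relative boundary of $\Omega_k(T)$. I would rotate a supporting line of $\Omega_k(T)$ at $0$ to the real axis, so $\Omega_k(T)\subseteq\{\Im z\ge0\}$; the half closed-half plane condition at $0$ then at once yields $\dim\operatorname{ran}E(\{\Im z\ge0\})\ge k$ and $\dim\operatorname{ran}E(\{\Im z\le0\})\ge k$. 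The remaining, and genuinely delicate, task is to combine this with the inclusion $\Omega_k(T)\subseteq\{\Im z\ge0\}$ via a planar Durszt-type analysis: along the directions in which $\Omega_k(T)$ is supported one should be able to extract finitely many atoms $w_1,\dots,w_N$ of $E$, with multiplicities $m_j\le\dim\operatorname{ran}E(\{w_j\})$ and $\sum_j m_j<\infty$, for which $0\in\Lambda_k\big(\bigoplus_j w_jI_{m_j}\big)$; the finite-dimensional formula \eqref{CKZ conjecture}, \cite{LS}, then produces $k$ mutually orthogonal combinations of eigenvectors of $T$ realising $PTP=0$. In the representative subcase, when the spectral mass strictly below the supporting line has dimension $<k$ — in particular when all of it sits on the real axis — this should boil down to checking $\dim\operatorname{ran}E(\{x\in\mathbb R:x<0\})\ge k$ and $\dim\operatorname{ran}E(\{x\in\mathbb R:x>0\})\ge k$ and applying Theorem \ref{result for selfadjoint operator} to the self-adjoint operator $T|_{\operatorname{ran}E(\mathbb R)}$ (for $k=1$ this is exactly a Durszt-type argument). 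Making this boundary extraction precise — showing that the half closed-half plane hypothesis at $0$ supplies exactly the balanced spectral mass needed to realise $0$ as a rank-$k$ compression — is, I expect, the main obstacle.
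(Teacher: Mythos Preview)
Your first inclusion $\bigcap_{V\in\mathscr V_k}W(V^*TV)\subseteq\Omega_k(T)$, the reduction of $k=\infty$ to finite $k$, and the treatment of interior points via $\Omega_k(T)\subseteq V_k(T)=\overline{\Lambda_k(T)}$ are all correct and essentially match the paper's arguments. The gap is entirely in the boundary case, and it is a real one.

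The paper's handling of a boundary point $\lambda\in\delta\Omega_k(T)\cap\Omega_k(T)$ rests on a fact you never establish (Lemma~\ref{boundary points}): there exists a half closed-half plane $H$ at $\lambda$ with $\dim\operatorname{ran}E(H^\circ)<k$. Merely rotating a supporting line of $\Omega_k(T)$ to the real axis, as you do, does \emph{not} imply this; one only knows that every point with $\Im z<0$ lies in \emph{some} $H\in\mathcal S_k$, which a priori says nothing about $\dim\operatorname{ran}E(\{\Im z<0\})$. The paper obtains the bound by a genuine compactness argument: approximate $\lambda$ from outside $\Omega_k(T)$ along a ray, pass to a subsequence of the supporting directions, and invoke Lemma~\ref{SOT limit of projections} on SOT limits of projections of rank $<k$ to control the limiting open half-plane. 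Your ``representative subcase'' is in fact the \emph{only} case once Lemma~\ref{boundary points} is available, but you have not supplied it.

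Even granting $\dim\operatorname{ran}E(H^\circ)<k$, your endgame is still incomplete. The half closed-half plane hypothesis at $\lambda$ forces both rays $L_1,L_2\subseteq\delta H$ to carry spectral mass $\ge1$, but \emph{not} $\ge k$; so your claim that it ``should boil down to'' $\dim\operatorname{ran}E(L_i\setminus\{0\})\ge k$ and Theorem~\ref{result for selfadjoint operator} covers only one of several possibilities. The paper closes this by \emph{induction on $k$}: if $\lambda$ is an eigenvalue, peel off a one-dimensional eigenspace and apply the induction hypothesis with $k-1$; if not but there are eigenvalues on both rays, peel off a $2\times2$ diagonal block and use $k-1$; otherwise one ray carries infinite-dimensional mass with no eigenvalues, which lets one cut $\delta H$ into two pieces with the right dimensions and again reduce to $k-1$ and $1$. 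Your ``extract finitely many atoms and apply \eqref{CKZ conjecture}'' idea is not a substitute for this: there is no reason the relevant spectral mass should be purely atomic, and the finite-dimensional formula does not directly organise the infinite-dimensional continuous part. The missing idea is the inductive structure, fed by the boundary lemma.
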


The proof of Theorem \ref{main theorem} will be given in the next section. It has many applications. In this section, we, motivated by the description of the numerical range of normal operators by Durszt \cite{D}, derive an analogous description for the higher rank numerical range of normal operators as an immediate consequence of Theorem \ref{main theorem}.

\begin{corollary}\label{analogous description}
Let $T\in\mathscr{B}\mathscr{(H)}$ be normal and $k\in\mathbb{N}_{\infty}$. Suppose $\mathcal{R}_k$ is the collection of all convex Borel set $s$ with $\text{dim ran}E(s^c)<k$. Then $\Lambda_k(T)=\bigcap\limits_{s\in\mathcal{R}_k}s$.
\end{corollary}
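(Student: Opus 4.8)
The plan is to deduce Corollary \ref{analogous description} from Theorem \ref{main theorem} by showing the set equality $\bigcap_{s\in\mathcal{R}_k}s=\Omega_k(T)$, i.e.\ that intersecting over all convex Borel sets whose complement has small $E$-measure gives the same set as intersecting over complements of half closed-half planes with small $E$-measure. One inclusion is essentially immediate: each $H\in\mathcal{S}_k$ has complement $H^c$ which is itself a convex Borel set (the complement of a half closed-half plane is again a half closed-half plane, hence convex and Borel), and $\text{dim ran}\,E((H^c)^c)=\text{dim ran}\,E(H)<k$, so $H^c\in\mathcal{R}_k$. Therefore $\bigcap_{s\in\mathcal{R}_k}s\subseteq\bigcap_{H\in\mathcal{S}_k}H^c=\Omega_k(T)$.

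For the reverse inclusion I would take $\lambda\in\Omega_k(T)$ and an arbitrary $s\in\mathcal{R}_k$, and argue $\lambda\in s$. Since $\Lambda_k(T)=\Omega_k(T)$ by Theorem \ref{main theorem}, I may also use that $\lambda\in\Lambda_k(T)$. Suppose for contradiction $\lambda\notin s$. Using Lemma \ref{closure interior of a convex set} one may first reduce to the case that $s$ is closed (replacing $s$ by $\overline{s}$: note $\overline{s}^{\,c}\subseteq s^c$ so $\text{dim ran}\,E(\overline{s}^{\,c})<k$ still holds, and if $\lambda\in\overline s$ for every such enlargement a short separating-line argument via Lemma \ref{partition of disjoint set} and Lemma \ref{separating a convex set} recovers $\lambda\in s$ — actually the cleanest route is: if $\lambda\notin\overline s$ separate strictly, if $\lambda\in\delta s$ handle via a supporting half closed-half plane). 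Then by Lemma \ref{separating a convex set} there is a closed half plane $H_0$ with $\delta H_0$ through $\lambda$ and $s\subseteq H_0$; pushing $\delta H_0$ slightly toward $s$ (or rotating the boundary ray appropriately) produces a half closed-half plane $H$ at $\lambda$ with $s\subseteq H$ and $\lambda\notin H$, hence $H^c\subseteq s^c$, so $\text{dim ran}\,E(H)\le\text{dim ran}\,E(s^c)<k$, i.e.\ $H\in\mathcal{S}_k$. But then $\lambda\notin H^c\supseteq\Omega_k(T)$, contradicting $\lambda\in\Omega_k(T)$. This gives $\Omega_k(T)\subseteq s$ for every $s\in\mathcal{R}_k$, hence $\Omega_k(T)\subseteq\bigcap_{s\in\mathcal{R}_k}s$, and combined with Theorem \ref{main theorem} we obtain $\Lambda_k(T)=\bigcap_{s\in\mathcal{R}_k}s$.

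The step I expect to require the most care is the geometric manipulation in the reverse inclusion: given a convex Borel set $s$ not containing $\lambda$, producing from a separating closed half plane an honest \emph{half closed-half plane} $H$ (an open half plane together with a ray on its boundary line, with initial point $\lambda$) that still contains $s$ but excludes $\lambda$, while controlling $\text{dim ran}\,E(H)$. When $\lambda\notin\overline s$ this is routine. The delicate case is $\lambda\in\delta s\setminus s$ (or $\lambda$ on the boundary of the closure), where one must choose the supporting line and the orientation of the boundary ray so that the single point $\lambda$ lands outside $H$ while all of $s$ stays inside; here the fact that $s^c\supseteq H^c$ keeps the dimension bound intact, so the main work is purely the planar convexity bookkeeping. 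Everything else is a direct translation through Theorem \ref{main theorem}.
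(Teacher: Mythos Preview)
Your overall strategy matches the paper's exactly: show $\bigcap_{s\in\mathcal{R}_k}s=\Omega_k(T)$ by two inclusions and then invoke Theorem~\ref{main theorem}. The first inclusion is fine (your parenthetical that ``the complement of a half closed-half plane is again a half closed-half plane'' is not literally correct---the complementary ray is open, not closed---but $H^c$ is nonetheless convex and Borel, so $H^c\in\mathcal{R}_k$ and the conclusion stands).

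The reverse inclusion, however, has a genuine orientation error. You want a half closed-half plane $H$ with $s\subseteq H$ and $\lambda\notin H$; from $s\subseteq H$ you get $H^c\subseteq s^c$, which gives $\text{dim ran}\,E(H^c)<k$, \emph{not} $\text{dim ran}\,E(H)<k$. So $H\notin\mathcal{S}_k$ and you cannot conclude $\Omega_k(T)\subseteq H^c$. (There is also an internal inconsistency: you write both ``$H$ at $\lambda$'', which forces $\lambda\in H$, and ``$\lambda\notin H$''; and later ``$\lambda\notin H^c$'', which is $\lambda\in H$ again.) What you actually need is a half closed-half plane $K$ at $\lambda$ with $K\subseteq s^c$, so that $\text{dim ran}\,E(K)\le\text{dim ran}\,E(s^c)<k$ and $\lambda\in K$, giving $\lambda\notin K^c\supseteq\Omega_k(T)$. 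The paper gets this directly, with no closure reduction and no ``pushing'': from Lemma~\ref{separating a convex set} take a closed half plane $H_0$ with $s\subseteq H_0$ and $\lambda\in\delta H_0$; since $s$ is convex and $\lambda\notin s$, $s$ cannot meet \emph{both} rays $l_1,l_2$ emanating from $\lambda$ along $\delta H_0$ (else $\lambda\in s$). If $s\cap l_1=\emptyset$, set $K=H_0^c\cup l_1\subseteq s^c$; this is a half closed-half plane at $\lambda$ with the required measure bound. This convexity-of-$s$ observation is the missing ingredient in your sketch and replaces the closure/boundary case analysis entirely.
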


\begin{proof}
Let $V_k(T)=\bigcap\limits_{s\in\mathcal{R}_k}s$. Let $H$ be a half closed-half plane with $\text{dim ran}E(H)<k$. As $H^c$ is a convex Borel set with $\text{dim ran}E(H)<k$, we have $H^c\in\mathcal{R}_k$. So, $V_k(T)\subseteq\Omega_k(T)$. Hence by Theorem \ref{main theorem}, we obtain $V_k(T)\subseteq\Lambda_k(T)$.

Let $\lambda\notin V_k(T)$. Then there exists a convex Borel set $s$ with $\text{dim ran}E(s^c)<k$ such that $\lambda\notin s$. As $s$ is convex and $\lambda\notin s$, there exists a closed half plane $H$ with the line $\delta H$ passing through $\lambda$ such that $s\subseteq H$ by Lemma \ref{separating a convex set}. Let $l_1,l_2$ be two opposite directed rays with initial point $\lambda$ on the line $\delta H$. Now, $s$ can not have non empty intersection with both $l_1$ and $l_2$ as otherwise $\lambda\in s$ because of convexity of $s$. Without loss of generality, let $s\cap l_1=\emptyset$. Take $K=H^c\cup l_1\subseteq s^c$. As $\text{dim ran}E(s^c)<k$, we have $\text{dim ran}E(K)<k$. So, $K$ is a half closed-half plane at $\lambda$ with $\text{dim ran}E(K)<k$. Now, by Theorem \ref{main theorem} and the definition of $\Omega_k(T)$, we have $\Lambda_k(T)=\Omega_k(T)\subseteq K^c$. As $\lambda\notin K^c$, we have $\lambda\notin\Lambda_k(T)$. This completes the proof.
\end{proof}

\section{Proof of the Main Theorem}

We begin with few lemmas.

\begin{lemma}\label{equivalent criterion of being member of our set}
Let $T\in\mathscr{B(\mathscr{H})}$ be a normal operator and $k\in\mathbb{N}_{\infty}$. Then $\lambda\in\Omega_k(T)$ if and only if for any half closed-half plane $H$ at $\lambda$, $\text{dim ran }E(H)\geq k$.
\end{lemma}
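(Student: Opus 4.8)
The plan is to unwind the definition of $\Omega_k(T)$ and reduce everything to an elementary planar observation about half closed-half planes, together with the monotonicity of spectral projections. Recall that $\Omega_k(T)=\bigcap_{H\in\mathcal{S}_k}H^c$, so $\lambda\in\Omega_k(T)$ if and only if $\lambda\notin H$ for every half closed-half plane $H$ with $\text{dim ran}\,E(H)<k$. The fact I will use repeatedly is that if $H'\subseteq H$ are Borel sets then $E(H')\le E(H)$, hence $\text{ran}\,E(H')\subseteq\text{ran}\,E(H)$ and $\text{dim ran}\,E(H')\le\text{dim ran}\,E(H)$.

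For the forward implication, suppose $\lambda\in\Omega_k(T)$ and let $H=\omega\cup l$ be any half closed-half plane at $\lambda$, so that $\lambda$ is the initial point of the ray $l$ and in particular $\lambda\in l\subseteq H$. If $\text{dim ran}\,E(H)<k$ then $H\in\mathcal{S}_k$, which forces $\lambda\in H^c$, contradicting $\lambda\in H$. Hence $\text{dim ran}\,E(H)\ge k$, as required.

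For the converse I argue by contraposition: assume $\lambda\notin\Omega_k(T)$, so there is a half closed-half plane $H=\omega\cup l$ with $\text{dim ran}\,E(H)<k$ and $\lambda\in H$. The key claim is: whenever $\lambda$ lies in a half closed-half plane $H$, there is a half closed-half plane $H'$ \emph{at} $\lambda$ with $H'\subseteq H$. Granting this, the monotonicity above gives $\text{dim ran}\,E(H')\le\text{dim ran}\,E(H)<k$, so $\lambda$ admits a half closed-half plane at it whose $E$-projection has rank less than $k$, which is exactly the negation of the right-hand condition.

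It remains to verify the key claim, which is where the (modest) work lies. Write $\omega=\{z:\operatorname{Re}(e^{i\theta}z)>c\}$ for suitable $\theta\in[0,2\pi)$, $c\in\mathbb{R}$, so $\delta\omega=\{z:\operatorname{Re}(e^{i\theta}z)=c\}$. If $\lambda\in\omega$, set $c'=\operatorname{Re}(e^{i\theta}\lambda)>c$, let $\omega'=\{z:\operatorname{Re}(e^{i\theta}z)>c'\}$, and let $l'$ be either of the two rays on the line $\{z:\operatorname{Re}(e^{i\theta}z)=c'\}$ with initial point $\lambda$; then $H'=\omega'\cup l'$ is a half closed-half plane at $\lambda$ and $H'\subseteq\omega\subseteq H$. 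If instead $\lambda\in l\subseteq\delta\omega$, let $l'$ be the sub-ray of $l$ with initial point $\lambda$ pointing in the same direction as $l$ (legitimate since $\lambda\in l$), and put $H'=\omega\cup l'\subseteq\omega\cup l=H$, again a half closed-half plane at $\lambda$. Since $H=\omega\cup l$, these two cases are exhaustive. I expect no genuine obstacle here; the only point requiring care is to ensure the constructed $H'$ is truly ``at $\lambda$'' — i.e.\ the initial point of its ray is exactly $\lambda$ — and that it sits inside $H$, both of which the above choices guarantee.
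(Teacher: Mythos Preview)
Your proof is correct and follows essentially the same approach as the paper: both directions are argued by contraposition, and the converse hinges on exactly the same geometric observation---given a half closed-half plane $H$ containing $\lambda$, produce one at $\lambda$ inside it by translating the boundary line to pass through $\lambda$. The only difference is cosmetic: the paper handles the construction in one line (``let $L$ be the line through $\lambda$ parallel to $\delta H$ and take $H'$ at $\lambda$ generated by $L$ with $H'\subseteq H$''), whereas you split explicitly into the cases $\lambda\in\omega$ and $\lambda\in l$, which amounts to unpacking that sentence.
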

\begin{proof}
`$\Rightarrow$'. Let there exists a half closed-half plane $H$ at $\lambda$ such that $\text{dim ran }E(H)<k$. Then by the definition of $\Omega_k(T)$, we have $\Omega_k(T)\subseteq H^c$. As $\lambda\notin H^c$, we have $\lambda\notin\Omega_k(T)$.\\
`$\Leftarrow$'. Let $\lambda\notin\Omega_k(T)$. Then by the definition of $\Omega_k(T)$, there exists a half closed-half plane $H$ containing $\lambda$ such that $\text{dim ran}E(H)<k$. Let $L$ be the line passing through $\lambda$ and parallel to the line $\delta H$. Consider a half closed-half plane $H'$ at $\lambda$  generated by $L$ such that $H'\subseteq H$. Then $\text{dim ran}E(H')<k$.
\end{proof}

Let $T$ be normal and $A,B\subseteq\mathbb{C}$. We denote $A\subseteq B$ a.e. to mean $E(A\setminus B)=0$.

\begin{lemma}\label{boundary points}
Let $T\in\mathscr{B(\mathscr{H})}$ be a normal operator and $k\in\mathbb{N}$. Suppose $\lambda\in\Omega_k(T)$. Then $\lambda\in\delta\Omega_k(T)$ if and only if there exists a half closed-half plane $H$ at $\lambda$ such that $\text{dim ran}E(H^\circ)<k$.
\end{lemma}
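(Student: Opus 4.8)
The plan is to prove both directions by relating the boundary of the convex set $\Omega_k(T)$ to supporting lines, and translating the measure-theoretic content between a half closed-half plane $H$ and its open half-plane part $H^\circ$. Throughout, write $\lambda \in \Omega_k(T)$, which by Lemma \ref{equivalent criterion of being member of our set} means $\dim\operatorname{ran}E(H) \geq k$ for every half closed-half plane $H$ at $\lambda$.

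First I would prove the ``if'' direction. Suppose there is a half closed-half plane $H$ at $\lambda$, generated by a line $L$ through $\lambda$ and containing a ray $l \subseteq L$ with initial point $\lambda$, such that $\dim\operatorname{ran}E(H^\circ) < k$. The idea is that $\Omega_k(T)$ cannot contain a full neighbourhood of $\lambda$. Let $l'$ denote the opposite ray on $L$ with initial point $\lambda$, and consider the half closed-half plane $H' := H^\circ \cup l'$, which is again a half closed-half plane at $\lambda$ generated by $L$, but on the opposite side. Since $\dim\operatorname{ran}E(H^\circ) < k$, points slightly on the $H^\circ$-side of $L$ will fail the criterion of Lemma \ref{equivalent criterion of being member of our set}: more precisely, for a point $\mu$ in the interior of $H^\circ$ near $\lambda$, there is a half closed-half plane at $\mu$ contained in $H^\circ$, so $\dim\operatorname{ran}E(\cdot) < k$ there, giving $\mu \notin \Omega_k(T)$. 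Hence $\Omega_k(T)$ lies in the closed half-plane $\overline{(H^\circ)^c} = (H^\circ)^c \cup L$ bounded by $L$, which passes through $\lambda$; thus $\lambda$ is not an interior point of $\Omega_k(T)$, and since $\lambda \in \Omega_k(T) = \overline{\Omega_k(T)}$ (using that $\Omega_k(T)$ is closed — it is an intersection of closed sets), Lemma \ref{partition of disjoint set} gives $\lambda \in \delta\Omega_k(T)$.

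Conversely, suppose $\lambda \in \delta\Omega_k(T)$. Since $\Omega_k(T)$ is convex and $\lambda$ is a boundary point, there is a supporting line $L$ at $\lambda$: a line through $\lambda$ such that $\Omega_k(T)$ lies in one of the two closed half-planes determined by $L$ (this is the standard supporting hyperplane theorem, in the spirit of Lemma \ref{separating a convex set} applied to the convex set $\Omega_k(T)$ and boundary point $\lambda$). Let $\omega$ be the open half-plane on the side of $L$ \emph{not} containing $\Omega_k(T)$, so $\omega \cap \Omega_k(T) = \emptyset$. I claim $\dim\operatorname{ran}E(\omega) < k$. If not, then $\dim\operatorname{ran}E(\omega) \geq k$; but $\omega$ is an open half-plane, hence an increasing union of half closed-half planes whose interiors exhaust $\omega$, and one shows (using that $\omega \cap \Omega_k(T) = \emptyset$, so every point of $\omega$ fails the Lemma \ref{equivalent criterion of being member of our set} criterion via some half closed-half plane contained in $\omega$) that $E(\omega)$ must have rank $< k$ — the point being that $\omega = \bigcup_n H_n$ for a nested sequence of half closed-half planes $H_n \subseteq \omega$ each with $\dim\operatorname{ran}E(H_n) < k$, and since the $E(H_n)$ increase to $E(\omega)$ in SOT, Lemma \ref{SOT limit of projections} forces $\dim\operatorname{ran}E(\omega) < k$. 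Now pick any half closed-half plane $H$ at $\lambda$ generated by $L$ with open part $H^\circ = \omega$; then $\dim\operatorname{ran}E(H^\circ) = \dim\operatorname{ran}E(\omega) < k$, as required.

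The main obstacle is the SOT-limit argument in the converse direction, and more precisely the hypothesis that $k$ is \emph{finite}: Lemma \ref{SOT limit of projections} is stated only for $k \in \mathbb{N}$ (and the subsequent remark shows it genuinely fails for $k = \infty$), so the decomposition of the open half-plane $\omega$ as a countable increasing union of half closed-half planes $H_n$ with the projections $E(H_n)$ converging strongly to $E(\omega)$ is exactly what makes the finiteness hypothesis essential. I would need to choose the $H_n$ carefully — e.g., $H_n$ the half closed-half plane obtained by translating $L$ a distance $1/n$ into $\omega$ and adjoining a suitable boundary ray — verify $\bigcup_n H_n = \omega$ and that the $E(H_n)$ are monotone with SOT-limit $E(\omega)$ (monotone convergence / countable additivity of the spectral measure), and only then invoke Lemma \ref{SOT limit of projections}. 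The ``if'' direction is comparatively routine, being essentially the same small perturbation argument already used implicitly in the proof of Corollary \ref{analogous description}.
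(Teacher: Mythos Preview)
Your ``if'' direction is essentially the paper's argument (stated contrapositively there), with one correction: $\Omega_k(T)$ is \emph{not} in general closed --- each $H^c$ with $H \in \mathcal{S}_k$ omits the initial point of the boundary ray of $H$, and indeed Example~\ref{bilateral shift} gives $\Omega_k(S) = \mathbb{D}$. This does not damage your argument, since $\lambda \in \Omega_k(T)$ and $\lambda \notin \Omega_k(T)^\circ$ already give $\lambda \in \delta\Omega_k(T)$ by Lemma~\ref{partition of disjoint set}, without any appeal to closedness.

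The ``only if'' direction, however, has a genuine gap. You choose an \emph{arbitrary} supporting line $L$ of $\Omega_k(T)$ at $\lambda$ and then assert that the nested $H_n \subseteq \omega$ parallel to $L$ satisfy $\text{dim ran }E(H_n) < k$, justified by the parenthetical that each point of $\omega$ fails Lemma~\ref{equivalent criterion of being member of our set} ``via some half closed-half plane contained in $\omega$.'' But Lemma~\ref{equivalent criterion of being member of our set} only supplies \emph{some} half closed-half plane of small rank at each such point; there is no reason it lies inside $\omega$, nor that your particular $H_n$ has small rank. Concretely, take $k=2$ and $T$ diagonal with eigenvalue $0$ of infinite multiplicity and simple eigenvalues $2+i$ and $2-i$. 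One checks $\Omega_2(T)=\{0\}$; the imaginary axis is then a legitimate supporting line at $0$, yet for $\omega=\{z:\Re(z)>0\}$ we have $\text{dim ran }E(\omega)=2$, and every half closed-half plane $H_n\subseteq\omega$ parallel to $L$ likewise has rank $2$. Thus neither your claim $\text{dim ran }E(\omega)<k$ nor the SOT step feeding Lemma~\ref{SOT limit of projections} survives for this choice of $L$; a supporting line in the correct direction (here the real axis) does exist, but your argument gives no mechanism for selecting it.

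The paper handles this by \emph{constructing} the direction rather than choosing it a priori: one takes $\lambda_n\to\lambda$ along a ray with $\lambda_n\notin\Omega_k(T)$, extracts half closed-half planes $H_n$ at $\lambda_n$ with $\text{dim ran }E(H_n)<k$ from Lemma~\ref{equivalent criterion of being member of our set}, passes to a subsequence with convergent slopes $\theta_n\to\theta$, and lets $H$ be the limiting half closed-half plane at $\lambda$. The verification that $\text{dim ran }E(H^\circ)<k$ then combines Lemma~\ref{SOT limit of projections} with a compactness step (restricting to a bounded rectangle containing $\sigma(T)$) to trap any hypothetical closed half-plane of rank $\geq k$ inside some $H_{n_\circ}$ up to a set of $E$-measure zero.
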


\begin{proof}
`$\Leftarrow$'. Let $\lambda\notin\delta\Omega_k(T)$. Then $\lambda\in\Omega_k(T)^\circ$ as $\Omega_k(T)=\Omega_k(T)^\circ\cup\left(\delta\Omega_k(T)\cap\Omega_k(T)\right)$ by Lemma \ref{partition of disjoint set}. Let $H$ be a half closed-half plane at $\lambda$. As $\lambda\in\Omega_k(T)^\circ$, it is possible to choose a point $\mu\in\Omega_k(T)$ and a half closed-half plane $H'$ at $\mu$ such that $H'\subseteq H^\circ$. Now, by Lemma \ref{equivalent criterion of being member of our set}, we have $\text{dim ran}E(H')\geq k$ and hence $\text{dim ran}E(H^\circ)\geq k$ (note that we do not need $k$ to be finite to prove this direction).\\
`$\Rightarrow$'. Let $\lambda\in\delta\Omega_k(T)$. Then there exists a sequence $\{\lambda_n\}_{n=1}^\infty$ not in $\Omega_k(T)$ and lying on a ray with initial point $\lambda$ (which is possible as $\Omega_k(T)$ is convex) such that $\lim\limits_{n\to\infty}\lambda_n=\lambda$. Since $\lambda_n\notin\Omega_k(T)$, by Lemma \ref{equivalent criterion of being member of our set}, there exists a half closed-half plane $H_n$ at $\lambda_n$ such that $\text{dim ran}E(H_n)<k$ for each $n$. Let $L_n=\delta H_n$. Suppose $L_n'$ be the line passing through $\lambda$ and parallel to $L_n$ and $H_n'$ be a half closed-half plane generated by $L_n'$ lying on the side of $H_n$. Let $\theta_n$ be the slope of $L_n'$. As $\theta_n\in [0,\pi]$ for all $n$ and $[0,\pi]$ is compact, $\{\theta_n\}_{n=1}^\infty$ has a convergent subsequence which we call $\{\theta_n\}_{n=1}^\infty$ itself and let $\lim\limits_{n\to\infty}\theta_n=\theta$. Let $L$ be the line with slope $\theta$ and passing through $\lambda$. Suppose $H$ is a half closed-half plane generated by $L$ such that $H^\circ$ lies on the side of $H_n$ for all but finitely many $n$. Now, we will show $H$ is the desired half closed-half plane at $\lambda$.

Let $\text{dim ran}E(H^\circ)\geq k$. We claim that there exists a closed half plane $\omega$ generated by a line $L'$ parallel to $L$ such that $\omega\subseteq H^\circ$ and $\text{dim ran}E(\omega)\geq k$. Indeed, if it be not true then $H^\circ=\bigcup\limits_{n=1}^\infty\omega_n$ where $\{\omega_n\}_{n=1}^\infty$ is an increasing sequence of closed half plane with $\text{dim ran}E(\omega_n)<k$ for all $n$. Then $E(\omega_n)\xrightarrow{\text{SOT}}E(H^\circ)$. As $\text{dim ran}E(\omega_n)<k$ for all $n$, by Lemma \ref{SOT limit of projections}, we obtain $\text{dim ran}E(H^\circ)<k$  which is a contradiction. Hence the claim follows.

Let $Q$ be a closed and bounded rectangle such that $\sigma(T)\subseteq Q$. So, $E(Q^c)=0$. Let $S$ be a strip generated by $L'$ and the mirror image of $L'$ with respect to $L$. As $\lim\limits_{n\to\infty}\theta_n=\theta$ and $L_n$ being parallel to $L_n', \lim\limits_{n\to\infty}\lambda_n=\lambda, E(Q^c)=0$, there exists $n_{\circ}\in\mathbb{N}$ such that $H_{n_{\circ}}'\setminus H_{n_\circ}\subseteq S$ a.e. which implies $\omega\subseteq H_{n_{\circ}}$ a.e. As $\text{dim ran}E(H_{n_{\circ}})<k$, we have $\text{dim ran}E(\omega)<k$ which is a contradiction. Hence $\text{dim ran}E(H^\circ)<k$.
\end{proof}

\begin{lemma}\label{relation between k and infinity}
Let $T\in\mathscr{B(\mathscr{H})}$ be a normal operator. Then $\Omega_\infty(T)=\bigcap\limits_{k\geq 1}\Omega_k(T)$.
\end{lemma}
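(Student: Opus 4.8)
The plan is to prove the two inclusions separately, arguing directly from the definition $\Omega_k(T)=\bigcap_{H\in\mathcal{S}_k}H^c$ together with the standing convention that ``$\text{dim ran } E(H)<k$'' means $\text{dim ran } E(H)\le k-1$ when $k$ is finite and $\text{dim ran } E(H)$ is finite when $k=\infty$.

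For the inclusion $\Omega_\infty(T)\subseteq\bigcap_{k\geq 1}\Omega_k(T)$, I would first observe that $\mathcal{S}_k\subseteq\mathcal{S}_\infty$ for every $k\in\mathbb{N}$: if $H$ is a half closed-half plane with $\text{dim ran } E(H)\le k-1$, then in particular $\text{dim ran } E(H)$ is finite, so $H\in\mathcal{S}_\infty$. Passing to complements and intersecting then gives $\Omega_\infty(T)=\bigcap_{H\in\mathcal{S}_\infty}H^c\subseteq\bigcap_{H\in\mathcal{S}_k}H^c=\Omega_k(T)$, and since this holds for every $k\in\mathbb{N}$ we conclude $\Omega_\infty(T)\subseteq\bigcap_{k\geq 1}\Omega_k(T)$.

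For the reverse inclusion, it suffices to show $\bigcap_{k\geq 1}\Omega_k(T)\subseteq H^c$ for each fixed $H\in\mathcal{S}_\infty$. Given such an $H$, set $m:=\text{dim ran } E(H)$, which is finite by definition of $\mathcal{S}_\infty$. Then $m\le (m+1)-1$, so $H\in\mathcal{S}_{m+1}$, and hence $\Omega_{m+1}(T)\subseteq H^c$ by the definition of $\Omega_{m+1}(T)$; a fortiori $\bigcap_{k\geq 1}\Omega_k(T)\subseteq\Omega_{m+1}(T)\subseteq H^c$. Intersecting over all $H\in\mathcal{S}_\infty$ yields $\bigcap_{k\geq 1}\Omega_k(T)\subseteq\bigcap_{H\in\mathcal{S}_\infty}H^c=\Omega_\infty(T)$, which finishes the proof.

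There is essentially no analytic obstacle here; the only points requiring care are the interpretation of the dimension inequalities when $k=\infty$ and the borderline case $m=0$ (where one uses $H\in\mathcal{S}_1$). Alternatively, the whole statement can be read off from Lemma \ref{equivalent criterion of being member of our set}: $\lambda\in\bigcap_{k\geq 1}\Omega_k(T)$ precisely when every half closed-half plane $H$ at $\lambda$ satisfies $\text{dim ran } E(H)\ge k$ for all finite $k$, i.e. when $\text{dim ran } E(H)$ is infinite for every such $H$, which is exactly the condition characterizing $\lambda\in\Omega_\infty(T)$.
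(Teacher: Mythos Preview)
Your proof is correct. Your primary argument works directly from the definition of $\Omega_k(T)$ by showing $\mathcal{S}_k\subseteq\mathcal{S}_\infty$ and $\mathcal{S}_\infty=\bigcup_{k\ge 1}\mathcal{S}_k$, whereas the paper argues pointwise via Lemma~\ref{equivalent criterion of being member of our set}; these are dual formulations of the same trivial observation, and indeed you yourself note the paper's route as an alternative at the end.
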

\begin{proof}
It is clear that $\Omega_{\infty}(T)\subseteq\bigcap\limits_{k\geq 1}\Omega_k(T)$ by Lemma \ref{equivalent criterion of being member of our set}. Let $\lambda\in\bigcap\limits_{k\geq 1}\Omega_k(T)$. Then $\lambda\in\Omega_k(T)$ for all $k\geq 1$. Let $H$ be a half closed-half plane at $\lambda$. Then by Lemma \ref{equivalent criterion of being member of our set}, $\text{dim ran }E(H)\geq k$ for all $k\geq 1$. So, $\text{dim ran }E(H)\geq \infty$. Again, by Lemma \ref{equivalent criterion of being member of our set}, we get $\lambda\in\Omega_{\infty}(T)$.
\end{proof}

 Let us now observe the following property before going to the proof of the main theorem.

\begin{proposition}\label{nonemptyness}
Let $T\in\mathscr{B(\mathscr{H})}$ be normal and $k\in\mathbb{N}$. Then $\Omega_k(T)$ is non-empty.
\end{proposition}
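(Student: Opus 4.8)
The plan is to deduce the non-emptiness of $\Omega_k(T)$ from the known fact that $\Lambda_k(T)\neq\emptyset$ for every $k\in\mathbb N$ (Corollary 3.5 of \cite{RM}; see also \cite{LPS2}), by first proving the inclusion $\Lambda_k(T)\subseteq\Omega_k(T)$. So the real content is: every $\lambda\in\Lambda_k(T)$ lies in $\Omega_k(T)$.

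To prove this I would argue by contradiction, using Lemma \ref{equivalent criterion of being member of our set}. Fix $\lambda\in\Lambda_k(T)$ and a rank-$k$ projection $P$ with $PTP=\lambda P$, so that $\langle Tf,f\rangle=\lambda$ for every unit vector $f\in\operatorname{ran}P$. If $\lambda\notin\Omega_k(T)$, Lemma \ref{equivalent criterion of being member of our set} gives a half closed-half plane $H$ at $\lambda$ with $\dim\operatorname{ran}E(H)<k$. Writing $H=\omega\cup l$ with $\omega$ an open half plane and $l$ the ray on $\delta\omega$ with initial point $\lambda$, the key geometric point is that $H^{c}=\omega^{*}\cup\ell$ — the opposite open half plane together with the opposite open ray — is a convex Borel set not containing $\lambda$. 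Put $\mathscr M:=\operatorname{ran}E(H^{c})=\operatorname{ran}(I-E(H))$. As $E(H^{c})$ is a spectral projection of the normal operator $T$, it commutes with $T$, so $\mathscr M$ reduces $T$ and $\operatorname{codim}\mathscr M=\dim\operatorname{ran}E(H)<k=\dim\operatorname{ran}P$; hence $\operatorname{ran}P\cap\mathscr M\neq\{0\}$ and we may pick a unit vector $f$ in this intersection. Since $E(H)f=0$, the probability measure $\mu_{f}(\cdot)=\langle E(\cdot)f,f\rangle$ is carried by $H^{c}$, so Lemma \ref{positive measure} (with $S=H^{c}$) gives $\langle Tf,f\rangle=\int_{H^{c}}z\,d\mu_{f}\in H^{c}$. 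But $f\in\operatorname{ran}P$ forces $\langle Tf,f\rangle=\lambda\notin H^{c}$ — a contradiction. Therefore $\lambda\in\Omega_k(T)$, whence $\Lambda_k(T)\subseteq\Omega_k(T)$ and, in particular, $\Omega_k(T)\neq\emptyset$.

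The one step that is not an immediate dimension count or application of the spectral theorem is the explicit description $H^{c}=\omega^{*}\cup\ell$ and the checks that it is convex and misses $\lambda$ (this is exactly what makes Lemma \ref{positive measure} applicable), so that is where I would be most careful; it is elementary but should be written out. The same computation also gives $\bigcap_{V\in\mathscr V_k}W(V^{*}TV)\subseteq\Omega_k(T)$ — take $V$ to be an isometry with range $\mathscr M$ — so, together with Proposition \ref{b}, one obtains an alternative proof. If instead one wants to avoid the external input $\Lambda_k(T)\neq\emptyset$, one can observe that the family $\{H^{c}:H\in\mathcal S_k\}$ has the finite intersection property (a finite union $H_{1}\cup\cdots\cup H_{m}$ cannot exhaust $\mathbb C$, since then $\operatorname{ran}E(H_{1}\cup\cdots\cup H_{m})\subseteq\sum_{i}\operatorname{ran}E(H_{i})$ would be finite dimensional while $\mathscr H$ is not) and then pass to the full intersection by compactness after checking $\Omega_k(T)\subseteq\overline{\operatorname{conv}}\,\sigma(T)$; but the $H^{c}$ are not closed, which makes this route more delicate, so I would prefer the first approach.
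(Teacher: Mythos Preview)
Your argument is correct and considerably shorter than the paper's. The inclusion $\Lambda_k(T)\subseteq\Omega_k(T)$ that you prove directly (via the dimension count $\operatorname{ran}P\cap\operatorname{ran}E(H^c)\neq\{0\}$ together with Lemma \ref{positive measure}) is exactly the first half of the paper's proof of Theorem \ref{main theorem}; there it is established through the chain $\Lambda_k(T)\subseteq\bigcap_{V\in\mathscr V_k}W(V^*TV)\subseteq\Omega_k(T)$ using Proposition \ref{b} and the same measure lemma. You have simply observed that this inclusion is logically independent of Proposition \ref{nonemptyness} and, combined with the external input $\Lambda_k(T)\neq\emptyset$ from \cite{RM}, yields the proposition immediately.

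The paper takes a completely different, internal route: it first passes to the closed set $V_k(T)=\bigcap_{H\in\mathcal S_k}\overline{H^c}$, shows $V_k(T)\neq\emptyset$ by the compactness argument you sketch at the end (a finite cover of a rectangle containing $\sigma(T)$ would force $\dim\mathscr H<\infty$), and then runs a three-case analysis on the affine dimension of $V_k(T)$ --- singleton, line segment, or containing a triangle --- to locate a point of $\Omega_k(T)$ in each case. This is precisely the ``delicate'' passage from $\overline{H^c}$ to $H^c$ that you flagged; the paper resolves it by hand rather than by a general compactness principle. What the paper's approach buys is (near) independence from the prior nonemptyness of $\Lambda_k(T)$, though it still invokes Lemma 3.2 of \cite{RM} in Case II; what your approach buys is brevity and the avoidance of any case analysis. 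Neither argument is circular, since the proof of Theorem \ref{main theorem} does not itself rely on Proposition \ref{nonemptyness}.
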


\begin{proof}
Let $V_k(T)=\bigcap\limits_{H\in\mathcal{S}_k}\overline{H^c}$. Then note that $\lambda\notin V_k(T)$ if and only if there exists a half closed half plane $H$ with $\text{dim ran}E(H)<k$ such that $\lambda\in H^\circ$. Clearly $\Omega_k(T)\subseteq V_k(T)$. Let us first show that $V_k(T)\neq\emptyset$. Suppose $V_k(T)=\emptyset$. Then $\mathbb{C}=\bigcup\limits_{H\in\mathcal{S}_k} H^{\circ}$. Let $Q$ be a closed and bounded rectangle in $\mathbb{C}$ such that $\sigma(T)\subseteq Q$. Then $\text{dim ran}E(Q^c)=0$. Now, $Q=\bigcup\limits_{H\in\mathcal{S}_k} (H^{\circ}\cap Q)$. As $Q$ is compact, there exist $H_1,\cdots,H_n\in\mathcal{S}_k$ such that $Q=\bigcup\limits_{i=1}^n({H_i}^{\circ}\cap Q)$. Since ${H_i}^{\circ}\cap Q\subseteq H_i$ and $H_i\in\mathcal{S}_k$ for all $i\in\{1,2,\cdots,n\}$, we have $\text{dim ran}E(Q)$ is finite which is a contradiction. So, $V_k(T)$ is a non-empty, closed and convex set. We will now consider the following cases and show that $\Omega_k(T)\neq\emptyset$ in each case .

\textbf{Case I:} Let $V_k(T)$ be a singleton set. Without loss of generality, we may take $V_k(T)=\{0\}$. It is sufficient to show $\text{dim ran}E(\mathbb{C}\setminus\{0\})$ is finite as then $0$ is an eigenvalue of $T$ with infinite multiplicity and hence $0\in\Omega_k(T)$. Let $(p_n)_{n=1}^\infty$ be a sequence of positive real numbers converging to $0$. As $p_n\notin V_k(T)$, there exists a half closed-half plane $H_n$ at $p_n$ such that $\text{dim ran}E(H_n)<k$. Let $L_n$ be the line passing through $0$ and parallel to the line $\delta H_n$.  Suppose $\theta_n$ be the slope of $L_n$. As $\theta_n\in [0,\pi]$ for all $n$ and $[0,\pi]$ is compact,  $(\theta_n)_{n=1}^\infty$ has a convergent subsequence say $(\theta_n)_{n=1}^\infty$ itself and $\lim\limits_{n\to\infty}\theta_n=\theta$. As $k\in\mathbb{N}$, by the same reason argued in Lemma \ref{boundary points}, there exists a half closed-half plane $H$ at $0$ with $\theta$ being the slope of the line $\delta H$ such that $\text{dim ran}E(H^\circ)<k$. Clearly $H$ contains $p_n$ for all $n$.

Now, by applying the same argument to a sequence of negative real numbers converging to $0$, there exists a half closed-half plane $K_1$ at $0$ such that $\text{dim ran}E(K_1^\circ)<k$. If $\delta K_1=\delta H$ then $\text{dim ran}E(\mathbb{C}\setminus\{0\})<4k$ as $\text{dim ran}E((\delta K_1)^c)<2k$ and by applying the same argument to a sequence converging to $0$ lying on each $L_i\setminus\{0\}$, we have $\text{dim ran}E(L_i\setminus\{0\})<k$ for all $i\in\{1,2\}$  where $L_1, L_2$ be are two opposite directed rays with initial point $0$ on the line $\delta K_1$. If $\delta K_1\neq\delta H$ then there exists a triangle $\Delta_1$ with $0$ as one of the vertices such that $\text{dim ran}E(\Delta_1^c)<2k$. Now, choose a sequence $(q_n)_{n=1}^\infty$ converging to $0$ lying on $l\setminus\{0\}$ where $l$ is the line segment joining $0$ and the midpoint of the side of $\Delta_1$ opposite to $0$. Again applying the same argument to $(q_n)_{n=1}^\infty$, there exists a half closed-half plane $K_2$ at $0$ such that $\text{dim ran}E(K_2^\circ)<k$. If the line $\delta K_2$ does not intersect $\Delta_1$ other than $0$ then we get $\text{dim ran}E(\mathbb{C}\setminus \{0\})<3k$ as $\text{dim ran}E(\Delta_1^c)<2k$ and $\text{dim ran}E(\Delta_1\setminus\{0\})\leq \text{dim ran}E(K_2^\circ)<k$ (since $\Delta_1\setminus\{0\}\subseteq K_2^\circ$). Now, if the line $\delta K_2$ passes through any side of $\Delta_1$ adjacent to $0$, then we have $\text{dim ran}E(\mathbb{C}\setminus \{0\})<4k$ as $\text{dim ran}E((\delta K_2)^c)<2k$ and $\text{dim ran}E(M_i\setminus\{0\})<k$ for all $i\in\{1,2\}$  where $M_1, M_2$ be are two opposite directed rays with initial point $0$ on the line $\delta K_2$. Finally, if the line $\delta K_2$ intersects $\Delta_1^\circ$ then there exists another triangle $\Delta_2$ with $0$ as one of the vertices such that $\Delta_2\subsetneq\Delta_1$ and $\text{dim ran}E(\Delta_2^c)<2k$. If the final case continues to occur repeatedly then we will end up with $\text{dim ran}E(\mathbb{C}\setminus \{0\})<4k$. Therefore, in any case, we have $\text{dim ran}E(\mathbb{C}\setminus\{0\})<4k$. Hence we are done in this case.

\textbf{Case II:} Let $V_k(T)$ be a nondegenerate line segment. Without loss of generality, we may assume $V_k(T)=\left[-p,p\right]\subseteq\mathbb{R}$ where $p>0$. Let $\omega_1:=\{z\in\mathbb{C}:\Im(z)>0\}$. Let $(z_n)_{n=1}^\infty$ be a sequence lying on the positive imaginary axis converging to $0$. Claim: $\text{dim ran}E(\omega_1)<k$. If not then as $k\in\mathbb{N}$, there exists a closed half plane $\omega$ generated by a line parallel to the real axis such that $\omega\subseteq\omega_1$ and $\text{dim ran}E(\omega)\geq k$ (by the similar reason argued in the proof of Lemma \ref{boundary points}). As $z_n\notin V_k(T)$, there exists a half closed-half plane $H_n$ with $\text{dim ran}E(H_n)<k$ such that $z_n\in H_n^{\circ}$. Observe that the line $\delta H_n$ can not intersect $(-p,p)$. Indeed, if so at $q\in(-p,p)$ then either $\frac{q+p}{2}\notin V_k(T)$ or $\frac{-p+q}{2}\notin V_k(T)$ as $V_k(T)\subseteq \overline{H_n^c}$. This is a contradiction. Now, since $\delta H_n$ does not intersect $(-p,p)$ for every $n\in\mathbb{N}, \lim\limits_{n\to\infty}z_n=0$ and $E(Q^c)=0$, it follows that there exists $n_{\circ}\in\mathbb{N}$ such that $\omega\subseteq H_{n_{\circ}}$ a.e. which implies $\text{dim ran}E(\omega)<k$. This is a contradiction. So, $\text{dim ran}E(\omega_1)<k$. Similarly, we have $\text{dim ran}E(\omega_2)<k$ where $\omega_2:=\{z\in\mathbb{C}:\Im(z)<0\}, \text{ dim ran}E(p,\infty)<k$ and $\text{ dim ran}E(-\infty,-p)<k$. Therefore,  $\text{dim ran}E([-p,p]^c)<4k$ which implies $\text{dim ran}E([-p,p])$ is infinite. Now, as $k\in\mathbb{N}$, by Lemma 3.2, \cite{RM} and Lemma \ref{equivalent criterion of being member of our set}, we have $\emptyset\neq\Omega_k(P_{\text{ran}E([-p,p])}T|_{\text{ran}E([-p,p])})\subseteq\Omega_k(T)$. Hence $\Omega_k(T)\neq\emptyset$ for $k\in\mathbb{N}$.

\textbf{Case III:} Let $V_k(T)$ contains atleast three non-collinear points say $p,q$ and $r$. As $V_k(T)$ is a closed and convex set, $V_k(T)$ contains the closed triangle $\Delta$ with vertices $p,q$ and $r$. Let $s$ be an interior point of $\Delta$. If possible let $s\notin\Omega_k(T)$. By Lemma \ref{equivalent criterion of being member of our set}, there exist a half closed-half plane $H$ at $s$ such that $\text{dim ran}E(H)<k$. Then $V_k(T)\subseteq\overline{H^c}$, which contradicts that $V_k(T)$ contains $\Delta$. So, $s\in\Omega_k(T)$. Hence we are done in this case also.
\end{proof}

\begin{remark}
Let $k\in\mathbb{N}$. Observe that if $V_k(T)=\{0\}$ then $0$ is an eigenvalue of $T$ with infinite multiplicity. However, this is not true for $k=\infty$ as discussed later (Remark \ref{r}).
\end{remark}

Let $\lambda_1,\lambda_2\in\mathbb{C}$. We denote by $(\lambda_1,\lambda_2)=\{t\lambda_1+(1-t)\lambda_2:0<t<1\}$, the open line segment joining $\lambda_1$ and $\lambda_2$ and $[\lambda_1,\lambda_2]=\{t\lambda_1+(1-t)\lambda_2:0\leq t\leq 1\}$, the closed line segment joining $\lambda_1$ and $\lambda_2$. We are now ready to prove the main theorem of this paper.

\begin{proof}[\textit{\textbf{Proof of Theorem \ref{main theorem}}}]

We will first prove 
\begin{align*}
\Lambda_k(T)\subseteq\bigcap\limits_{V\in\mathscr{V}_k}W(V^*TV)\subseteq\Omega_k(T)
\end{align*}
for $k\in\mathbb{N}_\infty$. Let $k\in\mathbb{N}_\infty$ and $H$ be a half closed-half plane with $\text{dim ran}E(H)<k$.  Since codimension of $\text{ran}E(H^c)$ is finite and $\mathscr{H}$ is infinite dimensional, there exists an isometry $V_H$ with range $\text{ran}E(H^c)$. Clearly $V_H\in\mathscr{V}_k$ and $V_HV_H^*=E(H^c)$. Denote $\mathscr{K}=\text{ran}E(H^c)$. Observe that $W(V_H^*TV_H)=W(P_{\mathscr{K}}T|_{\mathscr{K}})$. Let $\lambda\in W(P_{\mathscr{K}}T|_{\mathscr{K}})$. Then there exists $f\in\mathscr{K}$ with $\Vert f\Vert=1$ such that
\begin{align*}
\lambda=\langle P_{\mathscr{K}}T|_{\mathscr{K}}f,f\rangle=\langle Tf,f\rangle=\int_{\mathbb{C}}x\;d\langle E(\centerdot)f, f\rangle=\int_{H^c}x\;d\Vert E(\centerdot)f\Vert^2
\end{align*}
Also $E_{f,f}(H^c)=\langle E(H^c)f,f\rangle=\Vert f\Vert^2=1$. So, by Lemma \ref{positive measure}, we have $\lambda\in H^c$. Hence we obtain $W( V_H^*TV_H)\subseteq H^c$. Now, using Proposition \ref{b}, we have
\begin{align*}
\Lambda_k(T)\subseteq\bigcap\limits_{V\in\mathscr{V}_k} W(V^*TV)\subseteq\bigcap\limits_{H\in\mathcal{S}_k}W( V_H^*TV_H)\subseteq \bigcap\limits_{H\in\mathcal{S}_k} H^c=\Omega_k(T).
\end{align*}

We will now prove
\begin{align}\label{e}
\Omega_k(T)\subseteq\Lambda_k(T)
\end{align}
for $k\in\mathbb{N}$. We will show it by induction on $k$. Let $\lambda\in\Omega_1(T)$.  If possible let $\lambda\notin W(T)$. As $W(T)$ is convex, by Lemma \ref{separating a convex set}, there exists a closed half plane $H'$ with the line $\delta H'$ passing through $\lambda$ such that $W(T)\subseteq H'$. Let $L_1', L_2'$ be two opposite directed rays with initial point $\lambda$ on the line $\delta H'$. Now, $L_1'$ and $L_2'$ both can not have non empty intersection with $W(T)$ as otherwise $\lambda\in W(T)$ because of convexity of $W(T)$. So, without loss of generality, let $L_1'$ has empty intersection with $W(T)$. Let $\mathscr{K}_1=\text{ran}E(L_1')$. We claim that $\text{dim}\mathscr{K}_1=0$. If $\text{dim }\mathscr{K}_1$ be positive and finite then $P_{\mathscr{K}_1}T|_{\mathscr{K}_1
}$ has eigenvalue on $L_1'$ which implies $T$ has eigenvalue on $L_1'$ (since $\mathscr{K}_1$ is reducible subspace with respect to $T$) which contradicts that $L_1'$ has empty intersection with $W(T)$. If $\text{dim }\mathscr{K}_1$ be infinite then $\emptyset\neq W\left(P_{\mathscr{K}_1}T|_{\mathscr{K}_1}\right)\subseteq W(T)\cap L_1'$ which again contradicts that $L_1'$ has empty intersection with $W(T)$. So, $\text{dim }\mathscr{K}_1=0$. Then $\widetilde{H}={H'}^c\cup L_1'$ is a half closed-half plane at $\lambda$ with $\text{dim ran}E(\widetilde{H})=0$. So, by Lemma \ref{equivalent criterion of being member of our set}, $\lambda\notin\Omega_1(T)$ which contradicts our hypothesis. Hence $\lambda\in W(T)$. It shows (\ref{e}) is true for $k=1$.

Let us now assume that (\ref{e}) is true for any $m<k$. Let us first show $\Omega_k(T)\subseteq\overline{\Lambda_k(T)}$. Since for any $a,b\in\mathbb{C},\; \overline{\Lambda_k(aT+bI)}=a\overline{\Lambda_k(T)}+b,$ $\Omega_k(aT+bI)=a\Omega_k(T)+b$ and $aT+bI$ is normal, without loss of generality we show that whenever $0\in\Omega_k(T),$ it would imply $0\in\overline{\Lambda_k(T)}$. Let $0\in\Omega_k(T)$. Let $\theta\in[0,2\pi)$. Suppose $L_{\theta}$ be a straight line passing through $0$ making angle $\theta$ with positive imaginary axis. Let $\omega_1^\theta,\omega_2^\theta$ are two open half planes induced by $L_{\theta}$ and $L_1^\theta,L_2^\theta$ are two opposite directed rays with initial point $0$ on $L_\theta$. Then $H_1^\theta=\omega_1^\theta\cup L_1^\theta$ and $H_2^\theta=\omega_2^\theta\cup L_2^\theta$ are two half closed-half planes at $0$. As $0\in\Omega_k(T)$, we have $\text{dim ran}E(H_j^\theta)\geq k$ for all $j\in\{1,2\}$ by Lemma \ref{equivalent criterion of being member of our set}. Let $S_\theta=\Re{(e^{i\theta}T)}$ and $E_S^\theta$ be the spectral measure associated with $S_\theta$. Then we have $\text{dim ran}E_S^\theta\left(\Re(e^{i\theta}H_j^\theta)\right)\geq k$ for all $j\in\{1,2\}$ where we denote $\Re(C):=\{\Re(z):z\in C\}$ for some $C\subseteq\mathbb{C}$. Let 
\begin{align*}
& A_k^\theta:=\{a_{\theta}\in\mathbb{R}:\text{dim ran}E_S^\theta(-\infty,a_{\theta}]<k\},\\
& B_k^\theta:=\left\{b_{\theta}\in\mathbb{R}:\text{dim ran}E_S^\theta[b_{\theta},\infty)<k\right\}.
\end{align*}
As $\text{dim ran}E_S^\theta\left(\Re(e^{i\theta}H_j^\theta)\right)\geq k$ for all $j\in\{1,2\}$, we obtain $0\notin A_k^\theta$ and $0\notin B_k^\theta$. So, $0\in {A_k^\theta}^c\cap {B_k^\theta}^c=\Lambda_k(\Re({e^{i\theta}T)})$ by Theorem \ref{result for selfadjoint operator}. Then there exists an isometry $V_{\theta}:\mathbb{C}^k\rightarrow\mathscr{H}$ such that ${V_{\theta}}^*\Re({e^{i\theta}T)}V_{\theta}=0I_k$. So, $0=\lambda_k({V_{\theta}}^*\Re({e^{i\theta}T)}V_{\theta})\leq\lambda_k(\Re({e^{i\theta}T)})$ for all $\theta\in[0,2\pi)$. By Theorem \ref{geomertic description for operator}, we get $0\in\overline{\Lambda_k(T)}$. Hence we obtain $\Omega_k(T)\subseteq\overline{\Lambda_k(T)}$. As $\Lambda_k(T)$ is a convex set in $\mathbb{C}$, we have $\overline{\Lambda_k(T)}^\circ=\Lambda_k(T)^\circ$ by Lemma \ref{closure interior of a convex set}. So,
\begin{align}\label{part a}
\Omega_k(T)^\circ\subseteq\Lambda_k(T)^\circ
\end{align}

Next, we will show $\delta\Omega_k(T)\cap\Omega_k(T)\subseteq\Lambda_k(T)$. Let $\lambda\in\delta\Omega_k(T)\cap\Omega_k(T)$.  By Lemma \ref{boundary points}, there exist a half closed-half plane $H$ at $\lambda$ with $\text{dim ran }E(H^\circ)<k$. Let $L_1,L_2$ be two opposite directed rays with initial point $\lambda$ on the line $\delta H$. Then $H_1=H^\circ\cup L_1 $ and $H_2=H^\circ\cup L_2$ are two half closed-half planes at $\lambda$. As $\lambda\in\Omega_k(T)$, we have $\text{dim ran}E(H_1)\geq k$ and $\text{dim ran}E(H_2)\geq k$ by Lemma \ref{equivalent criterion of being member of our set}. As $\text{dim ran}E(H^\circ)<k$, we have $\text{dim ran}E(L_1)\geq 1$ and $\text{dim ran}E(L_2)\geq 1$. Let us now consider the following cases.

\textbf{Case I:} Let $\lambda$ be an eigenvalue of $T$ with eigenvector $f$. Let $T=[\lambda]\oplus T_0$ with respect to the decomposing $\mathscr{H}=\text{span}\{f\}\oplus\text{span}\{f\}^\bot$. Clearly $T_0$ is normal. As $\lambda\in\Omega_k(T)$, we have $\lambda\in\Omega_{k-1}(T_0)$ by Lemma \ref{equivalent criterion of being member of our set}. So, by induction hypothesis, we have $\lambda\in\Lambda_{k-1}(T_0)$. Therefore, there exists a projection $P_0$ of rank $k-1$ such that $P_0T_0P_0=\lambda P_0$. Take $P=[1]\oplus P_0$. Then $P$ is a projection of rank $k$ such that $PTP=\lambda P$. Hence $\lambda\in\Lambda_k(T)$.

\textbf{Case II:} Let $\lambda$ be not an eigenvalue of $T$ but $T$ has eigenvalues both on $L_1$ and $L_2$ say $\lambda_1$ and $\lambda_2$ respectively. Let $f_1,f_2$ be two orthonormal eigenvectors of $T$ corresponding to eigenvalues on $\lambda_1, \lambda_2$ respectively. Let $T=T_1\oplus T_2$ with respect to the decomposition of $\mathscr{H}=\text{span}\{f_1,f_2\}\oplus\text{span}\{f_1,f_2\}^\bot$. Clearly, $T_1, T_2$ are both normal and $T_1$ is unitarily similar to $\text{diag}(\lambda_1,\lambda_2)$. As $\lambda\in\Omega_k(T)$, we have $\lambda\in\Omega_1(T_1)$ and $\lambda\in\Omega_{k-1}(T_2)$ by Lemma \ref{equivalent criterion of being member of our set}. Then by induction hypothesis, we have  $\lambda\in\Lambda_1(T_1)$ and $\lambda\in\Lambda_{k-1}(T_2)$. So, there exists an unit vector $\{g_1\}$ in $\text{span}\{f_1,f_2\}$ such that $\langle T_1g_1,g_1\rangle=\lambda$ and an orthonormal set $\{g_j\}_{j=2}^k$ in $\text{ span}\{f_1,f_2\}^\bot$ such that $\langle T_2g_j,g_r\rangle=\lambda\delta_{j,r}$ for $j,r\in\{2,\cdots,k\}$. Then $\{g_j\}_{j=1}^k$ is an orthonormal set and $\langle Tg_j,g_r\rangle=\lambda\delta_{j,r}$ for all $j,r\in\{1,2,\cdots,k\}$. Hence $\lambda\in\Lambda_k(T)$.

\textbf{Case III:} Suppose $T$ does not have eigenvalues on $L_1$ and $L_2$ both. If both $\text{dim ran}E(L_1)$ and $\text{dim ran}E(L_2)$ are atleast $k,$ then by Theorem \ref{result for selfadjoint operator}, we have 
\begin{align*}
\lambda\in\Lambda_k\left(P_{\text{ran}E(L)}T|_{\text{ran }E(L)}\right)\subseteq\Lambda_k(T).
\end{align*}

Let one of $\text{dim ran}E(L_1)$ and $\text{dim ran}E(L_2)$ be less than $k$. Without loss of generality, let $\text{dim ran}E(L_2)=r<k$. Then $E(L_2)TE(L_2)$ has $r$ eigenvalues on $L_2$ say $\lambda_1,\lambda_2,\cdots,\lambda_r$ with $0<\vert\lambda-\lambda_1\vert\leq\cdots\leq\vert\lambda-\lambda_r\vert$. As $T$ does not have eigenvalues on $L_1$ and $L_2$ both, $\text{ dim ran}E(L_1)$ is infinite. Then there exists $\mu\in L_1$ such that $\text{dim ran}E([\lambda,\mu])$ and $\text{dim ran}E\left([\lambda,\mu]^c\cap L_1\right)$ are both infinite. Let $T=T_1\oplus T_2$ with respect to the decomposing $\mathscr{H}=\text{ran}E([\lambda_1,\mu])\oplus\text{ran}E([\lambda_1,\mu])^\bot$. Clearly $T_1,T_2$ are both normal. As $\lambda\in\Omega_k(T)$, we have $\lambda\in\Omega_1(T_1)$ and $\lambda\in\Omega_{k-1}(T_2)$ by Lemma \ref{equivalent criterion of being member of our set}. So, by induction hypothesis, we have  $\lambda\in\Lambda_1(T_1)$ and $\lambda\in\Lambda_{k-1}(T_2)$. Then there exists an unit vector $\{g_1\}$ in $\text{ran}E([\lambda_1,\mu])$ such that $\langle T_1g_1,g_1\rangle=\lambda$ and an orthonormal set $\{g_j\}_{j=2}^k$ in $\text{ran}E([\lambda_1,\mu])^\bot$ such that $\langle T_2g_j,g_r\rangle=\lambda\delta_{j,r}$ for $j,r\in\{2,\cdots,k\}$. Then $\{g_j\}_{j=1}^k$ is an orthonormal set and $\langle Tg_j,g_r\rangle=\lambda\delta_{j,r}$ for all $j,r\in\{1,2,\cdots,k\}$. Hence $\lambda\in\Lambda_k(T)$.

So, combining all three cases, we conclude
\begin{align}\label{part b}
\delta\Omega_k(T)\cap\Omega_k(T)\subseteq\Lambda_k(T)
\end{align}
Now, by (\ref{part a}), (\ref{part b}) and Lemma \ref{partition of disjoint set}, we obtain $\Omega_k(T)\subseteq\Lambda_k(T)$. Hence by principle of mathematical induction, we have $\Omega_k(T)\subseteq\Lambda_k(T)$ for $k\in\mathbb{N}$.

Finally, we will prove $\Omega_{\infty}(T)\subseteq\Lambda_{\infty}(T)$. Let $\lambda\in\Omega_{\infty}(T)$. Then by Lemma \ref{relation between k and infinity}, $\lambda\in\Omega_k(T)$ for all $k\geq 1$. By (\ref{e}), we have $\lambda\in\Lambda_k(T)$ for all $k\geq 1$. So, $\lambda\in\bigcap\limits_{k\geq 1}\Lambda_k(T)=\Lambda_{\infty}(T)$.
\end{proof}

\section{Examples and Consequences}

We compute here the higher rank numerical ranges of some standard normal operators using Theorem \ref{main theorem}. Our first example is the bilateral shift.

\begin{example}\label{bilateral shift}
Let $S:l^2(\mathbb{Z})\rightarrow l^2(\mathbb{Z})$ such that 
\begin{align*}
S\left((\ldots,x_{-1},\boxed{x_0},x_{1},\ldots)\right)&=(\ldots,x_{-2},\boxed{x_{-1}},x_{0},\ldots),\\
&\qquad\text{ where } (\ldots,x_{-1},\boxed{x_0},x_{1},\ldots)\in l^2(\mathbb{Z}).
\end{align*}
Note that $S$ is unitary with $\sigma_e(S)=\emptyset$ and $\sigma(S)=\mathbb{T}$. Let $S_n$ be $n$-dimensional backward shift. As $S_n$ is a compression of $S$, we have $W(S_n)=\{z\in\mathbb{C}:\vert z\vert\leq\cos\frac{\pi}{n+1}\}\subseteq W(S)$. Taking $n\to\infty$, we obtain $\mathbb{D}\subseteq W(S)$. Again, $W(S)\subseteq\overline{W(S)}=\text{conv }\sigma(S)=\overline{\mathbb{D}}$. Let $\lambda\in\mathbb{T}$ such that $\lambda\in W(S)$. Then there exists $x\in l^2(\mathbb{Z})$ with $\Vert x\Vert=1$ such that $\lambda=\langle Sx,x\rangle$. Now,
\begin{align*}
1=\vert\lambda\vert=\vert\langle Sx,x\rangle\vert\leq\Vert Sx\Vert\Vert x\Vert=1
\end{align*}
As Cauchy-Schwartz inequality is being achieved, we get that $\lambda$ is an eigenvalue of $S$ which contradicts $\sigma_e(S)=\emptyset$. So, $W(S)\subseteq\mathbb{D}$. Hence $W(S)=\mathbb{D}$.

Let $k\in\mathbb{N}_{\infty}$. Note that $\Omega_k(S)\subseteq\Omega_1(S)=W(S)=\mathbb{D}$. Let $\mu\in\mathbb{D}$. Suppose $H$ is a half closed-half plane at $\mu$. We claim that $\text{dim ran}E(H)\geq k$. If $0<\text{dim ran}E(H)<k$ then $S$ has eigenvalue which is a contradiction. If $\text{dim ran}E(H)=0$ then $\Omega_1(S)\subseteq H^c$ which contradicts that $\Omega_1(S)=\mathbb{D}$. So, $\text{dim ran}E(H)\geq k$. By Lemma \ref{equivalent criterion of being member of our set}, we obtain $\mu\in\Omega_k(S)$. So, $\Omega_k(S)=\mathbb{D}$. Hence by Theorem \ref{main theorem}, $\Lambda_k(S)=\mathbb{D}$ for $k\in\mathbb{N}_{\infty}$. 
\end{example}

However, the reader may look at Section $4$, \cite{RM} (Corollary 4.7, \cite{RM}) for a different proof of the description of the higher rank numerical range of the bilateral shift. The following example shows that $\infty$-rank numerical range could be empty.

\begin{example}\label{infinity rank numerical range being empty}
Consider $T=\bigoplus\limits_{n\geq 2}
    \begin{pmatrix}
    -\frac{1}{n} & 0 \\
    0 & \frac{e^{\frac{i\pi}{n}}}{n}
    \end{pmatrix}
$.  Then 
\begin{align*}
&\sigma_e(T)=\{-\frac{1}{n}:n\geq 2\}\cup\{\frac{e^{\frac{i\pi}{n}}}{n}:n\geq 2\},\\
&\sigma(T)=\sigma_e(T)\cup\{0\}.
\end{align*}
Let $\omega=\{z\in\mathbb{C}:\Im(z)<0\}$. Then $H_0=\omega\cup[0,\infty)$ is a half closed-half plane at $0$ with $\text{dim ran}E(H_0)=0$. So, $0\notin\Omega_1(T)$ and hence $0\notin\Omega_\infty(T)$ by Lemma \ref{relation between k and infinity}. Let $\lambda\neq 0$ be any point in $\mathbb{C}$. Then there exists a half closed-half plane $H_{\lambda}$ at $\lambda$ such that $\text{dim ran}E(H_{\lambda})<r$ for some $r\in\mathbb{N}$. So, by Lemma \ref{equivalent criterion of being member of our set}, $\lambda\notin\Omega_r(T)$ and hence $\lambda\notin\Omega_{\infty}(T)$ by Lemma \ref{relation between k and infinity}. Therefore, $\Omega_{\infty}(T)=\emptyset$. Hence by Theorem \ref{main theorem}, $\Lambda_{\infty}(T)=\Omega_{\infty}(T)=\emptyset$.

\end{example}

\begin{remark}\label{r}
In Example \ref{infinity rank numerical range being empty}, $V_{\infty}(T)=\{0\}$ (recalling $V_k(T)$ from Proposition \ref{nonemptyness}) since $0\in V_{\infty}(T)$ and for any $\lambda\neq 0$, there exists a half closed-half plane $H$ with $\lambda\in H^\circ$ such that $\text{dim ran}E(H)<r$ for some $r\in\mathbb{N}$ which implies $\lambda\notin V_r(T)$ and hence $\lambda\notin V_{\infty}(T)=\bigcap\limits_{k\geq 1}V_k(T)$.
\end{remark}

We now list down some consequences of Theorem \ref{main theorem}.

\begin{corollary}\label{compute numerical range}
Let $T\in\mathscr{B(\mathscr{H})}$ be a normal operator with no eigenvalue. Then $$\Lambda_k(T)=W(T),\quad \text{ for all } k\in\mathbb{N}_{\infty}.$$ 
\end{corollary}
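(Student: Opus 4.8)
The plan is to deduce this directly from Theorem~\ref{main theorem}, which says $\Lambda_k(T) = \Omega_k(T)$ for every $k \in \mathbb{N}_\infty$, together with the observation that the absence of eigenvalues forces the spectral measure to have no atoms, so that $E(H)$ is either zero-dimensional or infinite-dimensional for any half closed-half plane $H$. First I would recall that since $T$ has no eigenvalue, for any Borel set $\omega$ the subspace $\mathrm{ran}\,E(\omega)$ cannot be of positive finite dimension: if $\dim \mathrm{ran}\,E(\omega) = m$ with $0 < m < \infty$, then $\mathrm{ran}\,E(\omega)$ is a finite-dimensional reducing subspace for $T$, so $P_{\mathrm{ran}\,E(\omega)}T|_{\mathrm{ran}\,E(\omega)}$ is a normal matrix and hence has an eigenvalue, which is then an eigenvalue of $T$ — a contradiction. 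This is exactly the dichotomy already used in Example~\ref{bilateral shift}.

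Next I would show $\Omega_k(T) = \Omega_1(T) = W(T)$ for all $k \in \mathbb{N}_\infty$. The inclusion $\Omega_k(T) \subseteq \Omega_1(T)$ is automatic from $\mathcal{S}_1 \subseteq \mathcal{S}_k$ (equivalently, from the monotonicity $\Lambda_k \subseteq \Lambda_1$). For the reverse inclusion, take $\mu \in \Omega_1(T)$ and let $H$ be any half closed-half plane at $\mu$. By Lemma~\ref{equivalent criterion of being member of our set} applied with $k=1$, we have $\dim \mathrm{ran}\,E(H) \geq 1$, i.e.\ $E(H) \neq 0$. By the dichotomy of the previous paragraph, $\dim \mathrm{ran}\,E(H)$ must then be infinite, hence $\geq k$ for every $k \in \mathbb{N}_\infty$. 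Applying Lemma~\ref{equivalent criterion of being member of our set} in the other direction gives $\mu \in \Omega_k(T)$. Thus $\Omega_1(T) \subseteq \Omega_k(T)$, so all the $\Omega_k(T)$ coincide. Finally, Theorem~\ref{main theorem} (with $k=1$) gives $\Omega_1(T) = \Lambda_1(T) = W(T)$, and the same theorem for general $k$ gives $\Lambda_k(T) = \Omega_k(T) = \Omega_1(T) = W(T)$.

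There is no real obstacle here; the only point requiring a little care is the case $k = \infty$, where Lemma~\ref{equivalent criterion of being member of our set} is still available (it is stated for all $k \in \mathbb{N}_\infty$) and the convention that "dimension less than $\infty$" means "finite" is exactly what makes the argument $\dim \mathrm{ran}\,E(H) \geq k$ for all $k$ equivalent to $\dim \mathrm{ran}\,E(H) = \infty$. One should also note that $W(T)$ is automatically nonempty (indeed $\Omega_k(T)$ is nonempty for $k \in \mathbb{N}$ by Proposition~\ref{nonemptyness}), so the statement is not vacuous; and when $\sigma(T)$ is itself a single point the hypothesis of having no eigenvalue is never met, so this edge case does not arise.
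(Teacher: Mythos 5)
Your proposal is correct and follows essentially the same route as the paper: both rest on the dichotomy that, absent eigenvalues, a spectral projection of a half closed-half plane has range of dimension $0$ or $\infty$ (proved by the same reducing-subspace/normal-matrix argument), and both then conclude $\Omega_k(T)=\Omega_1(T)$ and invoke Theorem \ref{main theorem}. The only cosmetic difference is that the paper phrases the middle step as $\mathcal{S}_k=\mathcal{S}_1$ while you verify membership in $\Omega_k(T)$ pointwise via Lemma \ref{equivalent criterion of being member of our set}.
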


\begin{proof}
Let $H\in\mathcal{S}_k$. Then $H$ is a half closed-half plane in $\mathbb{C}$ with $\text{dim ran}E(H)<k$. We want to show $H\in\mathcal{S}_1$. If possible let $\text{ dim ran }E(H)\geq1$. As $E(H)T=TE(H), \text{ ran }E(H)$ is a reducible subspace and $P_{\text{ran}E(H)}T|_{\text{ran}E(H)}$ is a normal matrix which implies $T$ has eigenvalue which contradicts our hypothesis. So, $\text{dim ran }E(H)<1$. Therefore, $H\in S_1$ and hence $\mathcal{S}_k\subseteq \mathcal{S}_1$. Clearly $\mathcal{S}_1\subseteq\mathcal{S}_k$ also. So, $\mathcal{S}_1=\mathcal{S}_k$. Now, by Theorem \ref{main theorem},
\begin{align*}
W(T)=\Lambda_1(T)=\Omega_1(T)=\bigcap\limits_{H\in S_1} H^c=\bigcap\limits_{H\in S_k} H^c=\Omega_k(T)=\Lambda_k(T).
\end{align*}   
This completes the proof.
\end{proof}

\begin{corollary}\label{necessary condition for infinity numerical range being empty}
Let $T\in\mathscr{B(\mathscr{H})}$ be normal with $\Lambda_{\infty}(T)=\emptyset$. Then $T$ is diagonalizable.
\end{corollary}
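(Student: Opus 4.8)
The plan is to prove the contrapositive: if a normal operator $T$ fails to be diagonalizable, then $\Lambda_\infty(T)\neq\emptyset$. The guiding idea is that non-diagonalizability of a normal operator is exactly the statement that its spectral measure has a nonzero non-atomic part, and that on the reducing subspace carrying that part the operator has no eigenvalue, so Corollary~\ref{compute numerical range} becomes available.

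First I would carry out the reduction. Let $E$ be the spectral measure of $T$. Since $T$ is normal and $\mathscr{H}$ is separable, its eigenspaces are mutually orthogonal, so the set $N$ of eigenvalues of $T$ — which is precisely the set of atoms of $E$ — is countable, and $\text{ran}\,E(N)$ equals the closed linear span of all eigenvectors of $T$. If $T$ is not diagonalizable this span is a proper subspace, hence $\mathscr{K}:=\text{ran}\,E(\mathbb{C}\setminus N)=\big(\text{ran}\,E(N)\big)^{\perp}\neq\{0\}$. As $\mathscr{K}$ reduces $T$, the operator $T_0:=T|_{\mathscr{K}}$ is normal with spectral measure $\omega\mapsto E(\omega\cap(\mathbb{C}\setminus N))|_{\mathscr{K}}$, and this measure has no atoms, so $T_0$ has no eigenvalue.

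Next I would apply the results already in hand. By Corollary~\ref{compute numerical range} applied to $T_0$ we get $\Lambda_\infty(T_0)=W(T_0)$, which is non-empty because $\mathscr{K}\neq\{0\}$. (One could bypass Corollary~\ref{compute numerical range} and argue straight from Theorem~\ref{main theorem}: a non-atomic spectral measure assigns to every Borel set either the zero projection or a projection of infinite rank, since a finite-rank spectral projection of a normal operator would split off finitely many eigenvectors; hence the family $\mathcal{S}_k$ for $T_0$ is the same for every $k\in\mathbb{N}_\infty$, and $\Omega_\infty(T_0)=\Omega_1(T_0)=W(T_0)\neq\emptyset$.) It then remains to transfer non-emptiness back to $T$: if $P$ is an infinite-rank projection on $\mathscr{K}$ with $PT_0P=\lambda P$, then, viewing $P$ as an infinite-rank projection on $\mathscr{H}$ and using that $\mathscr{K}$ reduces $T$ (so $TPx=T_0Px\in\mathscr{K}$ for every $x$), we obtain $PTP=PT_0P=\lambda P$, whence $\lambda\in\Lambda_\infty(T)$. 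Thus $\emptyset\neq\Lambda_\infty(T_0)\subseteq\Lambda_\infty(T)$, contradicting the hypothesis $\Lambda_\infty(T)=\emptyset$.

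As for the main obstacle: there is no delicate estimate here — the argument is entirely structural. The point needing care is the characterization underlying the reduction, namely that a normal operator is diagonalizable if and only if $E(N)=I$ (equivalently, $E$ is purely atomic), so that failure of diagonalizability produces a nonzero reducing subspace on which the operator is eigenvalue-free; the remaining ingredients — that compressing to a reducing subspace can only enlarge $\Lambda_k$, and that Corollary~\ref{compute numerical range} covers the case $k=\infty$ — are routine.
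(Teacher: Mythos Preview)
Your proposal is correct and follows essentially the same route as the paper: both argue by contraposition, split off the reducing subspace $\mathscr{K}=\text{ran}\,E(\sigma_e(T)^c)$ on which the restriction $T_0$ is normal with no eigenvalue, invoke Corollary~\ref{compute numerical range} to get $\Lambda_\infty(T_0)=W(T_0)\neq\emptyset$, and then use the inclusion $\Lambda_\infty(T_0)\subseteq\Lambda_\infty(T)$. Your write-up is more careful about the justifications (countability of $N$, why $T_0$ has no eigenvalue, why the inclusion holds), but the argument is the same.
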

\begin{proof}
Let $T$ be not diagonalizable. Suppose  $T=T_1\oplus T_2$ with respect to the decomposition $\mathscr{H}=\text{ran }E(\sigma_e(T))\oplus\text{ran }E(\sigma_e(T)^c)$. Clearly $T_1, T_2$ are both normal and $T_2$ has no eigenvalue. As $\Lambda_{\infty}(T)=\emptyset$, we have $\Lambda_{\infty}(T_2)=\emptyset$. Since $T_2$ is normal with no eigenvalue, by Corollary \ref{compute numerical range}, we obtain $W(T_2)=\Lambda_{\infty}(T_2)$ which contradicts the fact that $W(T_2)$ is always non-empty. Hence $T$ is diagonalizable.
\end{proof}

\begin{remark}
Let $I$ be the identity operator acting on $\mathscr{H}$. Then $\Lambda_{k}(I)=W(I)=\{1\}$ for all $k\in\mathbb{N}_{\infty}$. Note that $I$ is a diagonalizable operator with the eigenvalue $1$. So, the converses of Corollary \ref{compute numerical range} and Corollary \ref{necessary condition for infinity numerical range being empty} are not necessarily true.
\end{remark}

Let us now give an alternative proof of Theorem 4.4, \cite{LPS}.

\begin{corollary}[Theorem 4.4, \cite{LPS}]\label{alternative proof}
Suppose $T\in\mathscr{B(\mathscr{H})}$ is normal and $1\leq k<\infty$. Then $\lambda\notin\Lambda_k(T)$ if and only if $T$ can be decomposed into $T_1\oplus T_2$ such that $T_1$ has dimension at most $k-1$, $W(T_1)\subseteq\lambda+S$ and $W(T_2)\subseteq\mathbb{C}\setminus(\lambda+S)$ where $S=e^{it}(\mathcal{P}\cup L)$ with $\mathcal{P}=\{z\in\mathbb{C}:\Im(z)>0\}$ and $L=(-\infty,0]$ or $[0,\infty)$ for some $t\in\mathbb{R}$.
\end{corollary}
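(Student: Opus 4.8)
The plan is to read the statement as a direct translation of Theorem~\ref{main theorem} and Lemma~\ref{equivalent criterion of being member of our set} into geometric language. The first point is purely geometric: as $t$ ranges over $\mathbb{R}$ and $L$ over $\{[0,\infty),(-\infty,0]\}$, the sets $\lambda+S$ with $S=e^{it}(\mathcal{P}\cup L)$ are exactly the half closed-half planes at $\lambda$; the rotation $e^{it}$ produces every direction of the boundary ray, and the two choices of $L$ produce the two sides on which the open half plane $e^{it}\mathcal{P}$ can lie (both orientations are genuinely needed, since rotating a fixed one never switches the side). Consequently the claim is equivalent to: $\lambda\notin\Lambda_k(T)$ if and only if there is a half closed-half plane $H$ at $\lambda$ with $\dim\operatorname{ran}E(H)\le k-1$, together with the associated reducing decomposition and the two numerical range inclusions. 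Since $\Lambda_k(T)=\Omega_k(T)$ by Theorem~\ref{main theorem}, and by Lemma~\ref{equivalent criterion of being member of our set} $\lambda\notin\Omega_k(T)$ precisely when such an $H$ exists (for finite $k$, $\dim\operatorname{ran}E(H)<k$ is the same as $\le k-1$), this existence statement is the backbone of both implications.

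For the forward implication, suppose $\lambda\notin\Lambda_k(T)$ and fix such an $H$, written $H=\omega\cup l$ with $\omega$ an open half plane and $l$ a ray on $\delta\omega$ with initial point $\lambda$. Put $\mathcal{H}_1=\operatorname{ran}E(H)$, $\mathcal{H}_2=\operatorname{ran}E(H^{c})$ and $T_i=T|_{\mathcal{H}_i}$; both subspaces reduce $T$, both $T_i$ are normal, and $\dim T_1=\dim\mathcal{H}_1\le k-1$. For $W(T_1)\subseteq\lambda+S=H$: for a unit vector $g\in\mathcal{H}_1$ one has, as in the proof of Theorem~\ref{main theorem}, $\langle T_1 g,g\rangle=\int_{H}z\,d\|E(\cdot)g\|^{2}\in H$ because $H$ is convex (Lemma~\ref{positive measure}). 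For $W(T_2)\subseteq\mathbb{C}\setminus(\lambda+S)=H^{c}$: by Theorem~\ref{main theorem}, $W(T_2)=\Omega_1(T_2)$, so it suffices to produce, for each $p\in H$, a half closed-half plane $H_p$ at $p$ with $H_p\subseteq H$ --- take a half closed-half plane at $p$ with bounding line parallel to $\delta\omega$ lying inside $\omega$ if $p\in\omega$, and take $\omega$ together with the sub-ray of $l$ with initial point $p$ if $p\in l$. The spectral measure $E_2$ of $T_2$ then satisfies $E_2(H_p)=E(H_p\cap H^{c})=0$ since $H_p\subseteq H$, so $p\notin\Omega_1(T_2)=W(T_2)$ by Lemma~\ref{equivalent criterion of being member of our set}; hence $W(T_2)\cap H=\emptyset$.

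For the converse, assume $T=T_1\oplus T_2$ with $\dim T_1\le k-1$, $W(T_1)\subseteq\lambda+S$ and $W(T_2)\subseteq\mathbb{C}\setminus(\lambda+S)$, and set $H=\lambda+S=\omega\cup l$. Writing $E=E_1\oplus E_2$ for the spectral measures, $\operatorname{ran}E(H)=\operatorname{ran}E_1(H)\oplus\operatorname{ran}E_2(H)$ and $\dim\operatorname{ran}E_1(H)\le\dim T_1\le k-1$, so it is enough to show $E_2(H)=0$; then $\dim\operatorname{ran}E(H)<k$, and Lemma~\ref{equivalent criterion of being member of our set} with Theorem~\ref{main theorem} gives $\lambda\notin\Omega_k(T)=\Lambda_k(T)$. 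Since $H=\omega\sqcup l$, it suffices to show $E_2(\omega)=0$ and $E_2(l)=0$. If $E_2(\omega)\ne 0$, pick $p\in\omega\cap\sigma(T_2)$ and $r>0$ with $\overline{B(p,r)}\subseteq\omega$; then $E_2(B(p,r))\ne 0$, and for a unit vector $f$ in the reducing subspace $\operatorname{ran}E_2(B(p,r))$ the measure $\|E_2(\cdot)f\|^{2}$ is a probability measure on the convex set $\overline{B(p,r)}$, so $\langle T_2 f,f\rangle\in\overline{B(p,r)}$ by Lemma~\ref{positive measure}; thus the (nonempty) numerical range of $T_2|_{\operatorname{ran}E_2(B(p,r))}$ would lie in $\overline{B(p,r)}\cap W(T_2)\subseteq H\cap H^{c}=\emptyset$, a contradiction. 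Replacing $\overline{B(p,r)}$ by the convex set $l$ shows likewise that $E_2(l)\ne 0$ is impossible. Hence $E_2(H)=E_2(\omega)+E_2(l)=0$.

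I expect the converse's reduction to $E_2(H)=0$ to be the main obstacle: the hypothesis only constrains $W(T_2)$, so the numerical range information must be pushed down to the spectral measure, and the honest way to do this is to compress $T_2$ to a suitable reducing subspace and exploit non-emptiness of the numerical range together with Lemma~\ref{positive measure}, treating separately the open half plane $\omega$ (where a genuine disk is available) and the one-dimensional boundary ray $l$ (where convexity of a ray forces the contradiction). The opening geometric identification of the family $\{\lambda+S\}$ with the half closed-half planes at $\lambda$ is routine but worth spelling out, precisely because both orientations $L=[0,\infty)$ and $L=(-\infty,0]$ must be used.
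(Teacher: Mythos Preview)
Your proof is correct and follows essentially the same route as the paper: both directions pivot on Theorem~\ref{main theorem} and Lemma~\ref{equivalent criterion of being member of our set}, and in the forward direction you take the same spectral decomposition $T_1=T|_{\operatorname{ran}E(H)}$, $T_2=T|_{\operatorname{ran}E(H^c)}$. The paper is terser---it simply asserts $W(T_i)$ lie in the correct half closed-half planes and, in the converse, that $\dim\operatorname{ran}E(H)<k$, without spelling out the Lemma~\ref{positive measure} verifications you supply---and for $W(T_2)\subseteq H^c$ it (implicitly) uses Lemma~\ref{positive measure} directly rather than your detour through $W(T_2)=\Omega_1(T_2)$, but these are matters of presentation, not substance.
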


\begin{proof}
`$\Rightarrow$'. Let $\lambda\notin\Lambda_k(T)$. Then by Theorem \ref{main theorem}, $\lambda\notin\Omega_k(T)$. By Lemma \ref{equivalent criterion of being member of our set}, there exists a half closed-half plane $H$ at $\lambda$ such that $\text{dim ran}E(H)<k$. Take $T_1=P_{\text {ran}E(H)}T|_{\text{ran}E(H)}$ and $T_2=P_{\text{ran}E(H^c)}T|_{\text{ran}E(H^c)}$. Then $T=T_1\oplus T_2$ such that dimension of $T_1$ is at most $k-1$ and $W(T_1)\subseteq H$ and $W(T_2)\subseteq H^c$.\\
`$\Leftarrow$'. Let $T$ can be decomposed into $T_1\oplus T_2$ such that $T_1$ has dimension at most $k-1$, $W(T_1)\subseteq\lambda+S$ and $W(T_2)\subseteq\mathbb{C}\setminus(\lambda+S)$ where $S=e^{it}(\mathcal{P}\cup L)$ with $\mathcal{P}=\{z\in\mathbb{C}:\Im(z)>0\}$ and $L=(-\infty,0]$ or $[0,\infty)$ for some $t\in\mathbb{R}$. Suppose $H=\lambda+S$. Then $H$ is a half closed-half plane at $\lambda$ with $\text{dim ran}E(H)<k$. So, by Lemma \ref{equivalent criterion of being member of our set}, $\lambda\notin\Omega_k(T)$. Hence by Theorem \ref{main theorem}, $\lambda\notin\Lambda_k(T)$.
\end{proof}

We end this section by the following observation.

\begin{corollary}
Let $T\in\mathscr{B(\mathscr{H})}$ be normal. Then $\lambda\notin\Lambda_\infty(T)$ if and only if $T$ can be decomposed into $T_1\oplus T_2$ such that $T_1$ has dimension at most finite, $W(T_1)\subseteq\lambda+S$ and $W(T_2)\subseteq\mathbb{C}\setminus(\lambda+S)$ where $S=e^{it}(\mathcal{P}\cup L)$ with $\mathcal{P}=\{z\in\mathbb{C}:\Im(z)>0\}$ and $L=(-\infty,0]$ or $[0,\infty)$ for some $t\in\mathbb{R}$.
\end{corollary}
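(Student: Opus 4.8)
The plan is to deduce this from Corollary \ref{alternative proof} together with Lemma \ref{relation between k and infinity} and the relation $\Lambda_\infty(T)=\bigcap_{k\geq 1}\Lambda_k(T)$ (Theorem 5.1, \cite{LPS}), which holds for all operators. Throughout, $E$ denotes the spectral measure of the normal operator $T$.

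For the ``$\Rightarrow$'' direction, suppose $\lambda\notin\Lambda_\infty(T)$. By Theorem \ref{main theorem}, $\lambda\notin\Omega_\infty(T)$, so by Lemma \ref{relation between k and infinity} there is some $k\in\mathbb{N}$ with $\lambda\notin\Omega_k(T)$. Then by Lemma \ref{equivalent criterion of being member of our set} there is a half closed-half plane $H$ at $\lambda$ with $\text{dim ran}\,E(H)<k$, i.e.\ $\text{dim ran}\,E(H)$ is finite. Writing $H=\lambda+S$ with $S$ of the stated form, I would set $T_1=P_{\text{ran}E(H)}T|_{\text{ran}E(H)}$ and $T_2=P_{\text{ran}E(H^c)}T|_{\text{ran}E(H^c)}$; then $T=T_1\oplus T_2$, $T_1$ has finite dimension, $W(T_1)\subseteq H=\lambda+S$ and $W(T_2)\subseteq H^c=\mathbb{C}\setminus(\lambda+S)$. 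This is verbatim the argument in Corollary \ref{alternative proof} with ``$<k$'' replaced by ``finite''.

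For the ``$\Leftarrow$'' direction, suppose $T=T_1\oplus T_2$ with $T_1$ finite-dimensional, $W(T_1)\subseteq\lambda+S$, $W(T_2)\subseteq\mathbb{C}\setminus(\lambda+S)$, $S$ as stated. Put $H=\lambda+S$, a half closed-half plane at $\lambda$. Since $\text{ran}\,E(H)$ is a reducing subspace for $T$ on which $T$ agrees with a part of $T_1$ (because $W(T_2)\cap H=\emptyset$ forces $E(H)$ to live inside the $T_1$ summand), $\text{dim ran}\,E(H)\leq\dim T_1<\infty$. Hence there is $k\in\mathbb{N}$ with $\text{dim ran}\,E(H)<k$, so by Lemma \ref{equivalent criterion of being member of our set} $\lambda\notin\Omega_k(T)$; by Lemma \ref{relation between k and infinity} $\lambda\notin\Omega_\infty(T)$, and by Theorem \ref{main theorem} $\lambda\notin\Lambda_\infty(T)$.

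The only subtle point — the ``main obstacle'' if there is one — is justifying that $\text{dim ran}\,E(H)$ is finite from the condition $W(T_2)\subseteq\mathbb{C}\setminus H$: one must argue that the spectral projection $E(H)$ cannot have infinite-dimensional range, which follows because $\text{ran}\,E(H)$ reduces $T$ and $W(P_{\text{ran}E(H)}T|_{\text{ran}E(H)})\subseteq H$, while $W(T_2)\cap H=\emptyset$ forces this reducing subspace into the finite-dimensional $T_1$ part. I would phrase this exactly as in the ``$\Leftarrow$'' half of Corollary \ref{alternative proof}, noting that nothing there used finiteness of $k$ in an essential way beyond translating ``$\text{dim ran}E(H)<k$'' and ``$\text{dim ran}E(H)$ finite'', so the proof is a routine adaptation.
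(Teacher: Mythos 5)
Your proposal is correct and takes essentially the same route as the paper, whose proof of this corollary is literally to repeat the argument of Corollary \ref{alternative proof} with ``$\text{dim ran}E(H)<k$'' read as ``$\text{dim ran}E(H)$ finite'', invoking Theorem \ref{main theorem} for $k=\infty$; your handling of the one subtle point (that $W(T_2)\subseteq\mathbb{C}\setminus(\lambda+S)$ forces $E_2(\lambda+S)=0$ via convexity of the half closed-half plane and Lemma \ref{positive measure}) is the right mechanism and is no less explicit than the paper's. The only cosmetic difference is your detour through Lemma \ref{relation between k and infinity}: since Lemma \ref{equivalent criterion of being member of our set} is stated for $k\in\mathbb{N}_\infty$, it can be applied directly with $k=\infty$ (where ``$<\infty$'' means finite by the paper's convention), making that step unnecessary though harmless.
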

\begin{proof}
It follows similarly as in the proof of Corollary \ref{alternative proof} using Theorem \ref{main theorem}.
\end{proof}

\section{Unitary dilations of a normal contraction}

In this section, we will see some interesting applications of the main theorem in this paper to the dilation theory. Let us first recall the definition of a dilation of an operator. Let $\mathscr{H}$ be a Hilbert space and $T\in \mathscr{B(\mathscr{H})}$. Suppose $\mathscr{K}$ is a Hilbert space containing $\mathscr{H}$. An operator $S\in\mathscr{B(\mathscr{K})}$ is said to be a dilation of $T$ (or $T$ is said to be a compression of $S$) if there exists a projection $P\in\mathscr{B(\mathscr{K})}$ on $\mathscr{H}$ such that $T=P_{\mathscr{H}}S|_{\mathscr{H}}$ or equivalently $S$ is unitarily similar to $2\times 2$ operator matrix $\begin{pmatrix}
    T & *\\
    * & *\\
  \end{pmatrix}$. Moreover, if $S$ being a dilation of $T$ be unitary then $S$ is said to be a unitary dilation of $T$ (cf. \cite{VP} for general background on the dilation theory).

Halmos \cite{PH} showed that every contraction $T\in\mathscr{B(\mathscr{H})}$ has a unitary dilation $U\in\mathscr{B(\mathscr{H}\oplus\mathscr{H})}$ of the following form $$U=\begin{pmatrix}
    T & -\sqrt{I-TT^*}\\
    \sqrt{I-T^*T} & T^*\\
  \end{pmatrix}.$$
It generated a lot of research including far reaching {Sz.-Nagy} dilation theorem regarding power unitary dilation of a contraction. Let $T\in \mathscr{B(\mathscr{H})}$ be a contraction. Halmos \cite{PH2} conjectured in 1962 that
\begin{align}\label{Halmos conjecture}
W(T)=\bigcap\left\{W(U): U \text{ is a unitary dilation of T}\right\}.
\end{align}
Durszt \cite{D} gave an example of a normal contraction to settle (\ref{Halmos conjecture}) in negative (1964). Later, Choi and Li have proved the following (2001).

\begin{theorem}[Theorem 2.4, \cite{CL}]\label{Halmos conjecture2}
Let $T$ be a contraction acting on a separable Hilbert space $\mathscr{H}$. Then
\begin{align*}
\overline{W(T)}=\bigcap\left\{\overline{W(U)}: U\in\mathscr{B}\mathcal{(H\oplus H)} \text{ is a unitary dilation of A}\right\}.
\end{align*}
\end{theorem}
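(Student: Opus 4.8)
The inclusion $\overline{W(T)}\subseteq\bigcap\{\overline{W(U)}\}$ is immediate: if $U$ dilates $T$ then $\langle Uf,f\rangle=\langle Tf,f\rangle$ for every unit vector $f\in\mathscr{H}$, so $W(T)\subseteq W(U)$ and hence $\overline{W(T)}\subseteq\overline{W(U)}$ for every unitary dilation $U$. The substance is the reverse inclusion, and I would obtain it by comparing the two sides through their supporting half-planes. For $S\in\mathscr{B}(\mathscr{K})$ and $\theta\in[0,2\pi)$ write $m_\theta(S):=\sup\sigma\big(\Re(e^{i\theta}S)\big)$; since $\Re\big(e^{i\theta}\langle Sf,f\rangle\big)=\langle\Re(e^{i\theta}S)f,f\rangle$, one has $m_\theta(S)=\sup\{\Re(e^{i\theta}z):z\in W(S)\}$, and because $\overline{W(S)}$ is closed and convex, Lemma \ref{separating a convex set} gives the familiar description
$$\overline{W(S)}=\bigcap_{\theta\in[0,2\pi)}\big\{z\in\mathbb{C}:\Re(e^{i\theta}z)\le m_\theta(S)\big\},$$
which is also the case $k=1$ of Theorem \ref{geomertic description for operator}.

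Now if $U\in\mathscr{B}(\mathscr{H}\oplus\mathscr{H})$ is a unitary dilation of $T$, then for each $\theta$ the self-adjoint operator $\Re(e^{i\theta}U)$ has $\Re(e^{i\theta}T)$ as its compression to $\mathscr{H}$, so $m_\theta(U)\ge m_\theta(T)$; that is, the supporting half-plane of $\overline{W(U)}$ in direction $\theta$ can only be larger than that of $\overline{W(T)}$. Consequently the theorem follows once we establish: for every $\theta\in[0,2\pi)$ and every $\varepsilon>0$ there is a unitary dilation $U_{\theta,\varepsilon}\in\mathscr{B}(\mathscr{H}\oplus\mathscr{H})$ of $T$ with $m_\theta(U_{\theta,\varepsilon})\le m_\theta(T)+\varepsilon$. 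Indeed, granting this, $\bigcap_U\overline{W(U)}\subseteq\bigcap_\theta\overline{W(U_{\theta,\varepsilon})}\subseteq\bigcap_\theta\{z:\Re(e^{i\theta}z)\le m_\theta(T)+\varepsilon\}$ for every $\varepsilon>0$, and letting $\varepsilon\downarrow0$ the right-hand intersection decreases to $\overline{W(T)}$ by the displayed formula.

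Thus everything comes down to that construction, which is the core of the argument. Replacing $T$ by $e^{i\theta}T$ (still a contraction) reduces it to the case $\theta=0$: given a contraction $T$ with $a:=\sup\sigma(\Re T)$, produce a unitary dilation $U$ on $\mathscr{H}\oplus\mathscr{H}$ with $\sup\sigma(\Re U)\le a+\varepsilon$. When $a\ge1$ nothing is needed, since any unitary $U$ (for instance the Halmos dilation) satisfies $\Re U\le I\le aI$. When $a<1$, the requirement $\sup\sigma(\Re U)\le a+\varepsilon$ says precisely that $\sigma(U)$ must avoid the open arc $\{e^{it}:\cos t>a+\varepsilon\}$ around $1$; I would produce such a $U$ by taking an appropriate member of the Sz.-Nagy--Halmos family of unitary dilations of $T$ on $\mathscr{H}\oplus\mathscr{H}$ — the family built from the defect operators $D_T=(I-T^*T)^{1/2}$, $D_{T^*}=(I-TT^*)^{1/2}$ and parametrised by a unitary on $\mathscr{H}$ — and choosing that parameter so that the off-diagonal and corner blocks of $U$ are oriented to drive the spectrum of the $2\times2$ self-adjoint operator matrix $\Re U$ into $[-1,a+\varepsilon]$. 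The scalar case is the guiding picture and in fact already yields equality: a $2\times2$ unitary dilation of $t\in\overline{\mathbb{D}}$ has numerical range a chord of $\overline{\mathbb{D}}$ through $t$, that chord realises every slope as the parameter runs over the circle, and choosing the parameter so that the chord is perpendicular to the real axis forces $\sup\{\Re z:z\in W(U)\}=\Re t=a$. Converting this geometry into an operator estimate — naming the correct parameter and bounding $\sup\sigma(\Re U)$ for the resulting block matrix — is the hard part; once it is in hand, the rest is the formal assembly described above.
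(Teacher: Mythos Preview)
The paper does not prove this theorem; it is quoted from Choi and Li \cite{CL} as background for the dilation results in Section~6, so there is no ``paper's own proof'' to compare against. That said, your half-plane reduction is exactly the standard route: write $\overline{W(\cdot)}$ as an intersection of half-planes indexed by $\theta$, and for each $\theta$ exhibit a unitary dilation whose support function in that direction matches (or nearly matches) that of $T$. The paper itself uses precisely this manoeuvre in the proof of Theorem~\ref{generalizing Wu's theorem}, taking the rotated Halmos dilation
\[
U_\alpha=\begin{pmatrix} T & -e^{-i\alpha}D_{T^*}\\ e^{-i\alpha}D_T & e^{-2i\alpha}T^*\end{pmatrix}
\]
and invoking $\lambda_k(\Re(e^{i\alpha}U_\alpha))=\lambda_k(\Re(e^{i\alpha}T))$.

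The gap in your proposal is real, and it is larger than you indicate. You correctly flag ``naming the correct parameter and bounding $\sup\sigma(\Re U)$'' as the hard part, but your candidate --- a member of the one-parameter Halmos family --- will not do the job for general (non-normal) $T$. A direct computation gives
\[
\Re(e^{i\alpha}U_\alpha)=\begin{pmatrix}\Re(e^{i\alpha}T) & \tfrac12(D_T-D_{T^*})\\ \tfrac12(D_T-D_{T^*}) & \Re(e^{i\alpha}T)\end{pmatrix},
\]
which for normal $T$ (the setting of Theorem~\ref{generalizing Wu's theorem}) collapses to $\Re(e^{i\alpha}T)\oplus\Re(e^{i\alpha}T)$ because $D_T=D_{T^*}$, and then the equality of support functions is immediate. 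For non-normal $T$ the off-diagonal block survives and can strictly increase the top of the spectrum: already for the $2\times2$ Jordan block $S=\left(\begin{smallmatrix}0&1\\0&0\end{smallmatrix}\right)$ one checks that every $U_\alpha$ has spectrum equal to the primitive eighth roots of unity, so $\sup\sigma(\Re U_\alpha)=\tfrac{\sqrt2}{2}$ for all $\alpha$, whereas $\sup\sigma(\Re S)=\tfrac12$. Thus the scalar picture you invoke does not transfer, and the single unitary parameter is not enough; Choi and Li's construction uses genuinely more freedom in the $(2,2)$ corner. Until that construction (or an equivalent one) is supplied, the proposal remains an outline rather than a proof.
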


Gau, Li and Wu generalised Theorem \ref{Halmos conjecture2} for the higher rank numerical range of a contraction acting on a finite dimensional Hilbert space.

\begin{theorem}(Theorem 1.2, \cite{GLW})\label{Halmos conjecture for HRNR on finite dimenion}
	Let $T\in\mathbb M_n$ be a contraction and $1\leq k\leq n$. Set $d_T=\mbox{dim~ran}(I-T^*T)^{\frac{1}{2}}.$ Then
	\begin{align*}
	\Lambda_k(T)=\bigcap\left\{\Lambda_k(U): U\in M_{n+d_T}  \text{ is a unitary dilation of T}\right\}.
	\end{align*}
\end{theorem}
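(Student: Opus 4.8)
\textit{Plan of proof.} The inclusion $\Lambda_k(T)\subseteq\bigcap\{\Lambda_k(U):U\in M_{n+d_T}\text{ a unitary dilation of }T\}$ is the routine half: if $\lambda\in\Lambda_k(T)$, pick a rank-$k$ projection $P$ with $\operatorname{ran}P\subseteq\mathbb{C}^n$ and $PTP=\lambda P$; for any dilation $U$ of $T$ the range of $P$ lies inside the dilating subspace $\mathbb{C}^n$, so $PUP=P\bigl(P_{\mathbb{C}^n}UP_{\mathbb{C}^n}\bigr)P=PTP=\lambda P$, whence $\lambda\in\Lambda_k(U)$; intersecting over $U$ gives the inclusion. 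For the reverse inclusion I would argue the contrapositive: given $\lambda\notin\Lambda_k(T)$, produce one unitary dilation $U\in M_{n+d_T}$ of $T$ with $\lambda\notin\Lambda_k(U)$.

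By the Li--Sze description (\ref{geometric description}) of $\Lambda_k$ of a matrix, $\mu\notin\Lambda_k(A)$ for $A\in M_m$ precisely when $\Re(e^{i\xi}\mu)>\lambda_k\bigl(\Re(e^{i\xi}A)\bigr)$ for some angle $\xi$, where $\lambda_k(\cdot)$ is the $k$-th largest eigenvalue. So $\lambda\notin\Lambda_k(T)$ fixes an angle $\xi$ with $\lambda_k\bigl(\Re(e^{i\xi}T)\bigr)<\Re(e^{i\xi}\lambda)$. Since $e^{i\xi}T$ is again a contraction with the same defect index $d_T$, and $U\mapsto e^{i\xi}U$ is a bijection of the unitary dilations of $T$ onto those of $e^{i\xi}T$ with $\Lambda_k(e^{i\xi}U)=e^{i\xi}\Lambda_k(U)$, I may replace $T$ by $e^{i\xi}T$ and take $\xi=0$: writing $H=\Re T$ and $\alpha=\lambda_k(H)$, one has $\alpha<\Re\lambda$, and it suffices to find a unitary dilation $U\in M_{n+d_T}$ of $T$ with at most $k-1$ eigenvalues of $\Re U$ strictly above $\alpha$, i.e.\ $\lambda_k(\Re U)\le\alpha$. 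Indeed, for such $U$, choosing $b$ with $\alpha\le b<\Re\lambda$, the closed half-plane $\{z:\Re z\le b\}$ contains all but at most $k-1$ of the eigenvalues of the unitary $U$ (which lie on $\mathbb{T}$) and excludes $\lambda$; by the description (\ref{CKZ conjecture}) of the higher rank numerical range of a normal matrix — equivalently, the finite-dimensional case of Theorem \ref{main theorem} applied to $U$ — this forces $\lambda\notin\Lambda_k(U)$. Note that $\Re T$ is the $n\times n$ principal compression of the Hermitian matrix $\Re U$ of size $n+d_T$, so Cauchy interlacing already gives $\lambda_k(\Re U)\ge\alpha$ for \emph{every} unitary dilation $U$; the point is that equality can be attained.

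To build this $U$ I would use the standard parametrization of the minimal unitary dilations of a contraction (a refinement of Halmos' dilation \cite{PH}): they form a family indexed by a unitary $\Gamma$ between the defect spaces $\mathcal{D}_T=\overline{\operatorname{ran}}\,(I-T^{*}T)^{1/2}$ and $\mathcal{D}_{T^{*}}=\overline{\operatorname{ran}}\,(I-TT^{*})^{1/2}$, both of dimension $d_T$, each acting on $\mathbb{C}^n\oplus\mathbb{C}^{d_T}$. Equivalently, diagonalizing the commuting Hermitian parts of $U$, a unitary $U\in M_{n+d_T}$ with $P_{\mathbb{C}^n}UP_{\mathbb{C}^n}=T$ is the same as vectors $w_1,\dots,w_{n+d_T}\in\mathbb{C}^n$ and unimodular scalars $z_1,\dots,z_{n+d_T}$ (the eigenvalues of $U$) with $\sum_l w_lw_l^{*}=I_n$ and $\sum_l z_l\,w_lw_l^{*}=T$; what is needed is such a representation in which at most $k-1$ of the $z_l$ lie in the open arc $\{z\in\mathbb{T}:\Re z>\alpha\}$. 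Since $H=\Re T$ already has only $p\le k-1$ eigenvalues exceeding $\alpha$, the idea is to route the entire defect of $T$ through the spectral subspace $\mathbf{1}_{(-\infty,\alpha]}(H)\mathbb{C}^n$: choose $\Gamma$ so that the $d_T$ extra eigenvalues of $\Re U$ all land at or below $\alpha$, while the part of $T$ supported over the top eigenspace of $H$ is ``unitarized'' without producing any eigenvalue of $\Re U$ above $\alpha$. Concretely I would do this one defect dimension at a time, inducting on $d_T$: at each step pass from a contractive dilation to one larger by a single coordinate and with defect index one smaller, selecting the new row and column so that the added coordinate direction sits where $\Re$ is at most $\alpha$, and checking by codimension-one Cauchy interlacing that $\lambda_k$ of the real part stays equal to $\alpha$; after $d_T$ steps one arrives at a unitary $U\in M_{n+d_T}$ with $\lambda_k(\Re U)=\alpha$.

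The only place genuine work is needed is this last construction. Interlacing for free gives merely $\lambda_{k+d_T}(\Re U)\le\alpha$ for all dilations, whereas we need $\lambda_k(\Re U)\le\alpha$ for a well-chosen one; and because the dilation size is the \emph{minimal} $n+d_T$ rather than $2n$, there is no slack — the parameter $\Gamma$ must be tuned so that simultaneously it really unitarizes $T$ and it introduces no new eigenvalue of $\Re U$ above $\alpha$. Verifying that such a $\Gamma$ always exists, by unwinding the spectral decomposition of $\Re(e^{i\xi}T)$ and tracking the interlacing as the defect is absorbed one direction at a time, is the technical core; everything else is the bookkeeping with (\ref{geometric description}) and with (\ref{CKZ conjecture})/Theorem \ref{main theorem} described above.
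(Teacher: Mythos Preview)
The paper does not prove this theorem at all: it is quoted verbatim from Gau--Li--Wu \cite{GLW} as background for the authors' own infinite-dimensional analogue (Theorem \ref{Halmos conjecture for HRNR on infinite dimenion}), which in turn is deferred to \cite{DM}. There is therefore no in-paper proof to compare your proposal against.

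For what it is worth, your outline follows the same overall strategy as the original proof in \cite{GLW}: the easy inclusion is exactly as you say, and for the reverse one uses the half-plane description (\ref{geometric description}) to fix an angle $\xi$ with $\Re(e^{i\xi}\lambda)>\lambda_k(\Re(e^{i\xi}T))$, and then exhibits a single unitary dilation $U\in M_{n+d_T}$ with $\lambda_k(\Re(e^{i\xi}U))=\lambda_k(\Re(e^{i\xi}T))$. Where you diverge from \cite{GLW} is in the construction of that $U$. You propose an induction on $d_T$, absorbing the defect one direction at a time and tracking codimension-one interlacing; this is plausible, but you have not actually carried it out, and you yourself flag that minimality of the dilation size leaves no slack. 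The argument in \cite{GLW} is more direct: they write down the explicit one-parameter family of $(n+d_T)$-dimensional unitary dilations (the Halmos form restricted to defect spaces, with a unitary parameter between $\mathcal{D}_T$ and $\mathcal{D}_{T^*}$) and show by a block computation that the parameter can be chosen so that $\Re(e^{i\xi}U)$ is, up to unitary equivalence, $\Re(e^{i\xi}T)$ augmented only by eigenvalues already at or below $\lambda_k(\Re(e^{i\xi}T))$. So your plan is in the right spirit but leaves precisely the hard step---``verifying that such a $\Gamma$ always exists''---as an unexecuted promise.
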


We have further generalized Theorem \ref{Halmos conjecture for HRNR on finite dimenion} for the higher rank numerical range of a contraction acting on an infinite dimensional separable Hilbert space \cite{DM}.

\begin{theorem}[\cite{DM}]\label{Halmos conjecture for HRNR on infinite dimenion}
Let $T\in\mathscr{B(\mathscr{H})}$ be a contraction and $k\in\mathbb{N}_{\infty}$. Then
\begin{align*}
\overline{\Lambda_k(T)}=\bigcap\left\{\overline{\Lambda_k(U)}: U\in\mathscr{B}\mathcal{(H\oplus H)} \text{ is a unitary dilation of A}\right\}.
\end{align*}
\end{theorem}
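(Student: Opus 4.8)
The plan is to prove the two inclusions separately, ``$\subseteq$'' being essentially formal and ``$\supseteq$'' being the substance. For ``$\subseteq$'': if $U$ is a unitary dilation of $T$ on $\mathscr H\oplus\mathscr H$ and $P$ is a rank-$k$ projection on $\mathscr H$ with $PTP=\lambda P$, then, regarding $P$ as the projection $P\oplus 0$ on $\mathscr H\oplus\mathscr H$, we have $\text{ran}\,P\subseteq\mathscr H\oplus 0$, and since the $(1,1)$-entry of $U$ equals $T$ it follows that $PUP=PTP=\lambda P$; hence $\Lambda_k(T)\subseteq\Lambda_k(U)$, and passing to closures and intersecting over all unitary dilations $U$ gives $\overline{\Lambda_k(T)}\subseteq\bigcap\{\overline{\Lambda_k(U)}\}$ (valid for every $k\in\mathbb N_\infty$).

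For ``$\supseteq$'' I argue the contrapositive: given $\lambda\notin\overline{\Lambda_k(T)}$ I must produce \emph{one} unitary dilation $U$ of $T$ with $\lambda\notin\overline{\Lambda_k(U)}$. Take first $k\in\mathbb N$. By Theorem \ref{geomertic description for operator}, $\overline{\Lambda_k(T)}=V_k(T)=\bigcap_{\xi\in[0,2\pi)}\{\mu:\Re(e^{i\xi}\mu)\le\lambda_k(\Re(e^{i\xi}T))\}$, so there is an angle $\xi$ with $\Re(e^{i\xi}\lambda)>\lambda_k(\Re(e^{i\xi}T))$. Since $U\mapsto e^{i\xi}U$ is a bijection from the unitary dilations of $T$ onto those of $e^{i\xi}T$ and $\Lambda_k(e^{i\xi}U)=e^{i\xi}\Lambda_k(U)$, I may replace $(T,\lambda)$ by $(e^{i\xi}T,e^{i\xi}\lambda)$ and thus assume
$$a:=\Re\lambda>\lambda_k(\Re T)=:b;$$
moreover I may assume $b<1$, for otherwise $a>1$, $\lambda\notin\overline{\mathbb D}$, and no $\overline{\Lambda_k(U)}\subseteq\overline{\mathbb D}$ contains $\lambda$. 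It now suffices to construct a unitary dilation $U$ of $T$ with $\lambda_k(\Re U)<a$: then $\lambda\notin\{\mu:\Re\mu\le\lambda_k(\Re U)\}\supseteq V_k(U)=\overline{\Lambda_k(U)}$, again by Theorem \ref{geomertic description for operator}.

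The construction of this $U$ is the crux. Fix $c\in(b,\min(a,1))$ and let $P:=E_{\Re T}\big((c,\infty)\big)$, where $E$ denotes the spectral measure of the self-adjoint contraction $\Re T$. Because $\lambda_k(\Re T)=b<c$, the min--max characterisation of $\lambda_k$ (equivalently Theorem \ref{result for selfadjoint operator} applied to $\Re T$) forces $m:=\dim\text{ran}\,P<k$, while $\Re T\le cI$ on $\text{ran}(I-P)$. Write $T$ as a $2\times 2$ operator matrix with respect to $\mathscr H=\text{ran}\,P\oplus\text{ran}(I-P)$: the $(1,1)$-corner $T_1$ is an $m$-dimensional contraction and the $(2,2)$-corner $T_2$ is a contraction on the infinite-dimensional space $\text{ran}(I-P)$ with $\Re T_2\le cI$. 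The aim is then: (i) dilate $T_2$ to a unitary $U_2$ on $\text{ran}(I-P)\oplus\text{ran}(I-P)$ whose real part still satisfies $\Re U_2\le c'I$ for some $c'\in(c,\min(a,1))$ --- this is an infinite-dimensional \emph{constrained} unitary dilation, in the spirit of Choi--Li's proof of Theorem \ref{Halmos conjecture2} and of Gau--Li--Wu's Theorem \ref{Halmos conjecture for HRNR on finite dimenion}; (ii) dilate the finite corner $T_1$ by a ``symmetry-type'' unitary, so that the real part of that block contributes at most $m$ dimensions of spectrum above $c'$; and (iii) combine these into a single unitary dilation $U$ of the full $T$ on $\mathscr H\oplus\mathscr H$ for which $\dim\text{ran}\,E_{\Re U}\big((c',\infty)\big)\le m<k$, whence $\lambda_k(\Re U)\le c'<a$. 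The mechanism is already visible in the toy case of a diagonal $T$, where one dilates each diagonal entry $t_j$ separately: by the rotation $\bigl(\begin{smallmatrix}t_j&-\sqrt{1-t_j^2}\\\sqrt{1-t_j^2}&t_j\end{smallmatrix}\bigr)$ when $\Re t_j\le c$ (real part $\le cI_2$, contributing nothing above $c'$) and by the symmetry $\bigl(\begin{smallmatrix}t_j&\sqrt{1-t_j^2}\\\sqrt{1-t_j^2}&-t_j\end{smallmatrix}\bigr)$ when $\Re t_j>c$ (real part having exactly one eigenvalue above $c'$); the total overshoot is then $m<k$, which is exactly the bound needed.

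I expect the main obstacle to be step (iii) for a \emph{non-normal} $T$: one must manufacture a coherent unitary dilation of all of $T$ that accommodates the off-diagonal blocks $T_{12},T_{21}$ without spoiling the dimension count, i.e.\ one needs the infinite-dimensional, higher-rank refinement of the constrained-dilation techniques of \cite{CL} and \cite{GLW}, realised through a judicious choice of the free unitary parameters in the standard parametrisation of the unitary dilations of $T$ by unitaries between its defect spaces. Finally, the case $k=\infty$ (where ``$\dim<k$'' means ``finite'') should be handled either by running the same construction with ``finite'' in place of ``$<k$'', or by reducing it to the finite-$k$ identities together with $\Lambda_\infty=\bigcap_{k\ge 1}\Lambda_k$ (Theorem 5.1 of \cite{LPS}); note that the contrapositive argument never used $\overline{\Lambda_k(T)}\neq\emptyset$, so the degenerate empty case needs no separate treatment.
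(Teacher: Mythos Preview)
This theorem is not proved in the present paper; it is quoted from \cite{DM}, a companion work listed as ``in preparation''. There is therefore no proof here against which to compare your attempt.

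Your outline is sound in structure. The inclusion $\subseteq$ is immediate as you say, and for $\supseteq$ the reduction via Theorem~\ref{geomertic description for operator} to producing a single unitary dilation $U$ with $\lambda_k(\Re(e^{i\xi}U))<\Re(e^{i\xi}\lambda)$ is the natural route; it is exactly what the paper does (in the \emph{normal} case) inside the ``$\Leftarrow$'' part of the proof of Theorem~\ref{generalizing Wu's theorem}, where the phased Halmos dilation $U_\alpha$ is used directly. However, what you have submitted is a plan, not a proof: you explicitly leave step~(iii) --- reassembling the separate dilations of the blocks $T_1,T_2$ into a genuine unitary dilation of the full operator $T$, accommodating the off-diagonal entries $T_{12},T_{21}$ while preserving the bound $\dim\text{ran}\,E_{\Re U}((c',\infty))<k$ --- as an ``expected obstacle'' to be handled by unspecified constrained-dilation techniques from \cite{CL} and \cite{GLW}. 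That step is precisely the substance of the theorem for non-normal $T$, and nothing in the present paper supplies it. Without it carried out, the argument is incomplete.
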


The following theorem generalizes Theorem 2, \cite{D} by Durszt. It says that the closure sign on the higher rank numerical range in Theorem \ref{Halmos conjecture for HRNR on infinite dimenion} can not be omitted.

\begin{theorem}\label{generalising Durszt's example}
Let $k\in\mathbb{N}_{\infty}$. Then there exists a normal contraction $T$ for which the $k$-rank numerical range of all unitary dilations of $T$ contains $\Lambda_k(T)$ as a proper subset.
\end{theorem}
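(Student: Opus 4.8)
The plan is to reuse Durszt's construction of a normal contraction $T$ and combine it with Theorem \ref{main theorem} (to compute $\Lambda_k(T)$ from the spectral measure) and with Theorem \ref{Halmos conjecture for HRNR on infinite dimenion} together with Theorem \ref{Halmos conjecture2} / the Choi--Li machinery (to control $\bigcap_U \Lambda_k(U)$). First I would recall that Durszt's example is a normal contraction $T$, diagonal with eigenvalues forming a sequence in $\overline{\mathbb D}$, chosen so that each eigenvalue has multiplicity one and the eigenvalues accumulate on the unit circle $\mathbb T$ in a controlled way (so that $\overline{W(T)}$ is all of $\overline{\mathbb D}$ but $W(T)$ omits certain boundary points). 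The key feature is that there is a point $\lambda_0$ (in Durszt's example a point of $\mathbb T$) which lies in $\overline{W(T)}$ but not in $W(T)$, and in fact $\lambda_0$ is not attained because the relevant spectral projections have dimension $0$ or are too small; so by Theorem \ref{main theorem} one gets $\lambda_0 \notin \Lambda_k(T)$ for every $k \in \mathbb N_\infty$, since $\Lambda_k(T) \subseteq \Lambda_1(T) = W(T)$.

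Next I would compute $\bigcap_U \Lambda_k(U)$, or rather show $\lambda_0$ belongs to it. The point is that each unitary dilation $U$ of $T$ is itself (essentially) normal-or-larger on a bigger space, and Durszt showed $\lambda_0 \in W(U)$ for \emph{every} unitary dilation $U$; the stronger claim needed here is $\lambda_0 \in \Lambda_k(U)$ for every such $U$ and every $k$. To get this I would use that a unitary $U$ with $\lambda_0 \in \sigma(U) \cap \mathbb T$ and with $\lambda_0$ not isolated (or with infinite spectral mass accumulating appropriately near $\lambda_0$) has $\lambda_0 \in \Lambda_k(U)$ by Theorem \ref{main theorem}: indeed for a unitary, every half closed-half plane $H$ at a point $\lambda_0 \in \mathbb T$ has $E_U(H) \supseteq E_U(\text{an arc through } \lambda_0)$, and one only needs $\mathrm{dim\,ran}\,E_U(H) \geq k$ for all such $H$. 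Since $T$ sits inside $U$ as a compression and $T$ already carries infinitely many eigenvalues clustering at $\lambda_0$ from both sides along $\mathbb T$, any unitary dilation inherits enough spectral mass on both sides of any line through $\lambda_0$ to force $\mathrm{dim\,ran}\,E_U(H) = \infty \geq k$. Hence $\lambda_0 \in \Omega_k(U) = \Lambda_k(U)$ for all $U$ and all $k$, so $\lambda_0 \in \bigcap_U \Lambda_k(U)$, while $\lambda_0 \notin \Lambda_k(T)$; combined with the trivial inclusion $\Lambda_k(T) \subseteq \bigcap_U \Lambda_k(U)$ (every $U$ dilates $T$, so $\Lambda_k(T) = \Lambda_k(P_{\mathscr H} U|_{\mathscr H}) \supseteq \cdots$, or directly from Theorem \ref{Halmos conjecture for HRNR on infinite dimenion} at the level of closures plus the strict containment just exhibited), this gives the desired proper containment.

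The main obstacle I anticipate is the second step: verifying that \emph{every} unitary dilation $U$ — not just Halmos's canonical one — has $\mathrm{dim\,ran}\,E_U(H) \geq k$ for every half closed-half plane $H$ at $\lambda_0$. The danger is a unitary dilation that ``wastes'' its extra dimensions far from $\lambda_0$ and keeps only a thin sliver of spectrum near $\lambda_0$. To rule this out I would argue at the level of the compression: if $U = \begin{pmatrix} T & * \\ * & * \end{pmatrix}$ on $\mathscr H \oplus \mathscr H$ and $H$ is an open-ish half-plane region at $\lambda_0$ with $\mathrm{dim\,ran}\,E_U(H) < \infty$, then $P_{\text{ran}\,E_U(H^c)} U|_{\text{ran}\,E_U(H^c)}$ is a finite-codimension compression of $U$ whose numerical range lies in $\overline{H^c}$ (by Lemma \ref{positive measure}), and compressing further to $\mathscr H$ would force all but finitely many of the eigenvalues $\lambda_j$ of $T$ clustering at $\lambda_0$ to lie in $\overline{H^c}$ — contradicting that they approach $\lambda_0$ from the $H$-side along the circle. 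Making ``from the $H$-side'' precise is where I would have to choose Durszt's sequence of eigenvalues carefully (approaching $\lambda_0$ along $\mathbb T$ from both arcs, so that no line through $\lambda_0$ can separate all but finitely many of them to one side), and this is the one place where the argument is genuinely delicate rather than formal; the rest follows by assembling Theorem \ref{main theorem}, Lemma \ref{equivalent criterion of being member of our set}, and Theorem \ref{Halmos conjecture for HRNR on infinite dimenion}.
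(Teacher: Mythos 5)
Your strategy has a genuine gap at its central step: the claim that a suitably chosen diagonal normal contraction $T$, with eigenvalues accumulating at a point $\lambda_0\in\mathbb{T}$, forces $\lambda_0\in\Lambda_k(U)$ for \emph{every} unitary dilation $U$. This cannot work for a witness point on the unit circle. If $U$ is unitary, take the tangent line $L$ to $\mathbb{T}$ at $\lambda_0$, let $\omega$ be the open half plane on the side of $L$ not containing $\overline{\mathbb{D}}$, and let $l$ be a ray of $L$ with initial point $\lambda_0$; then $H=\omega\cup l$ is a half closed-half plane at $\lambda_0$ with $H\cap\sigma(U)\subseteq H\cap\mathbb{T}=\{\lambda_0\}$, so $\text{dim ran}E_U(H)=\text{dim ran}E_U(\{\lambda_0\})$, and by Lemma \ref{equivalent criterion of being member of our set} and Theorem \ref{main theorem} you would need $\lambda_0$ to be an eigenvalue of $U$ of multiplicity at least $k$ (equivalently, for $k=1$: a unimodular point is in $W(U)$ only if it is an eigenvalue, by the Cauchy--Schwarz equality argument used in Example \ref{bilateral shift}). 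This is exactly where your ``both sides of any line through $\lambda_0$'' heuristic fails: for the tangent line, \emph{all} the spectral mass of $U$ and all eigenvalues of $T$ lie on the closed-disk side, so no choice of Durszt-type eigenvalue sequence in $\overline{\mathbb{D}}$ can approach $\lambda_0$ from the $H$-side. Moreover, having $\lambda_0$ as an eigenvalue is not forced on dilations: if $T=\bigoplus_n(\mu_n)$ is diagonal with $\mu_n\neq\lambda_0$, take $U=\bigoplus_n U_n$ where $U_n$ is the $2\times 2$ Halmos dilation of the $1\times 1$ block $(\mu_n)$ (or $U_n=(\mu_n)$ itself when $|\mu_n|=1$); each $U_n$ has unimodular eigenvalues with real part $\Re(\overline{\lambda_0}\mu_n)<1$ after rotating $\lambda_0$ to $1$, hence none equal to $\lambda_0$, so $U$ is a unitary dilation of $T$ with $\lambda_0\notin\sigma_e(U)$ and therefore $\lambda_0\notin W(U)\supseteq\Lambda_k(U)$. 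So your candidate witness never lies in $\bigcap_U\Lambda_k(U)$, and the properness cannot be exhibited at a point of $\mathbb{T}$ unless $\lambda_0$ were already a multiplicity-$\geq k$ eigenvalue of $T$, in which case $\lambda_0\in\Lambda_k(T)$ and it is no witness at all.

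The paper's construction avoids this by placing the witness strictly inside the disk. It takes $T=T_1\oplus 0I_k$ with $T_1$ multiplication by $z$ on $L^2$ of the upper half circle $\Gamma$, so that $\Lambda_k(T)=R\cup\{0\}$ where $R$ is the open upper half disk; the relevant boundary of $\Lambda_k(T)$ contains the flat segment $(-1,1)$, and the eigenvalue $0$ of multiplicity exactly $k$ puts the single point $0$ of that segment into $\Lambda_k(T)$. For any unitary dilation $U$ and any real $r\in(0,1)$, if $r\notin\Lambda_k(U)$ then the excluding half closed-half plane is forced (by $R\cup\{0\}\subseteq\Lambda_k(U)$) to be $\{\Im z<0\}\cup[r,\infty)$; since $\sigma(U)\subseteq\mathbb{T}$ gives $E_U([0,r))=0$, the same rank bound holds for the half closed-half plane $\{\Im z<0\}\cup[0,\infty)$ at $0$, contradicting $0\in\Lambda_k(U)$. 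Thus $(-1,1)\subseteq\Lambda_k(U)$ for every unitary dilation, while $(-1,1)\not\subseteq\Lambda_k(T)$. The essential mechanism is that unitarity annihilates spectral mass on the chord joining the witness to the eigenvalue $0$, an effect that is only available at interior points of $\mathbb{D}$; if you want to salvage your write-up, you should redirect it along these lines rather than trying to force membership of a unimodular point in $\Lambda_k(U)$ for all dilations $U$.
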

\begin{proof}
Let $T$ be a normal operator with $0$ being the only eigenvalue of $T$ with multiplicity $k$ and $\sigma(T)=\Gamma\cup\{0\}$ where $\Gamma=\{z\in\mathbb{C}:\Im{(z)}\geq 0,\; \vert z\vert=1\}$. Clearly $T$ is a contraction. We can construct such normal contraction by considering $T=T_1\oplus 0I_k$ where $T_1$ is the multiplication operator by the variable $z$ on the Hilbert space $L^2(\mu)$ where $\mu$ is the Lebesgue measure supported on $\Gamma$. Let $R=\{z\in\mathbb{C}:\Im{(z)}>0,\; \vert z\vert<1\}$. As $T$ does not have any eigenvalue on $\Gamma$ and $E_T$ has positive measure on any subarc of $\Gamma$, we get $R\cup\{0\}\subseteq\Lambda_k(T)$ by Corollary \ref{compute numerical range}. Now, by Lemma \ref{equivalent criterion of being member of our set} and Theorem \ref{main theorem}, we obtain $\Lambda_k(T)\subseteq R\cup\{0\}$. Therefore, $\Lambda_k(T)=R\cup\{0\}$.

Let $U$ be a unitary dilation of $T$. Then $R\cup\{0\}\subseteq\Lambda_k(U)$. We claim that $(-1,1)\subseteq\Lambda_k(U)$. If possible let there exists $r\in (-1,1)$ such that $r\notin\Lambda_k(U)$. Without loss of generality, let $r>0$. Again by Theorem \ref{main theorem} and Lemma \ref{equivalent criterion of being member of our set}, there exists a half closed-half plane $H_r$ at $r$ such that $\text{dim ran}E_U(H_r)<k$. As $R\subseteq\Lambda_k(U)$, $\delta H_r$ has to be the real axis. Again as $0\in\Lambda_k(U)$, the only possible choice of $H_r=\{z\in\mathbb{C}:\Im{(z)}<0\}\cup[r,\infty)$. Let $H_0=\{z\in\mathbb{C}:\Im{(z)}<0\}\cup[0,\infty)$. As $\text{dim ran}E_U(H_r)<k$, we have $\text{dim ran}E_U(H_0)<k$. So, by Lemma \ref{equivalent criterion of being member of our set}, $0\notin\Omega_k(U)$ and hence $0\notin\Lambda_k(U)$ by Theorem \ref{main theorem} which is a contradiction. Therefore, $(-1,1)\subseteq\Lambda_k(U)$. This completes the proof.
\end{proof}

In fact, more generally, we obtain the following.

\begin{proposition}\label{generalizing only if direction of Wu's theorem}
Let $k\in\mathbb{N}_{\infty}$ and $T$ be a normal strict contraction with
\begin{align*}
\Lambda_k(T)=\bigcap\left\{\Lambda_k(U): U \text{ is a unitary dilation of T}\right\}
\end{align*}
Then whenever $\Lambda_k(T)$ contains a point $\lambda\in\delta\Lambda_k(T)$ it also contains every open line segment $(\lambda,\lambda')$ in $\delta\Lambda_k(T)$.
\end{proposition}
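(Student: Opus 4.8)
The plan is to fix an arbitrary point $\mu$ in the open segment $(\lambda,\lambda')$ and prove $\mu\in\Lambda_k(T)$; since $\mu$ is arbitrary this yields $(\lambda,\lambda')\subseteq\Lambda_k(T)$. By the standing hypothesis $\Lambda_k(T)=\bigcap\{\Lambda_k(U):U\text{ a unitary dilation of }T\}$, it suffices to show $\mu\in\Lambda_k(U)$ for every unitary dilation $U$. I would fix such a $U$, observe that it is unitary, hence normal, with $\sigma(U)\subseteq\mathbb{T}$, and recall that $\Lambda_k(T)\subseteq\Lambda_k(U)$ since $T$ is a compression of $U$ (a rank-$k$ projection on $\mathscr{H}$ witnessing a point of $\Lambda_k(T)$, viewed on the larger space, witnesses it for $U$). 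Assuming for contradiction that $\mu\notin\Lambda_k(U)$, Theorem \ref{main theorem} applied to the normal operator $U$ gives $\mu\notin\Omega_k(U)$, and Lemma \ref{equivalent criterion of being member of our set} then produces a half closed-half plane $H=\omega\cup\ell$ at $\mu$ (with $\omega$ an open half-plane and $\ell$ a ray on $\delta\omega$ issuing from $\mu$) such that $\text{dim ran}E_U(H)<k$.

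The key step I expect is to pin down the line $\delta\omega$. Since $H$ is a half closed-half plane with $\text{dim ran}E_U(H)<k$, the definition of $\Omega_k(U)$ together with Theorem \ref{main theorem} gives $\Lambda_k(T)\subseteq\Lambda_k(U)=\Omega_k(U)\subseteq H^c\subseteq\omega^c$, and hence $\overline{\Lambda_k(T)}\subseteq\omega^c$ because $\omega^c$ is closed. As $(\lambda,\lambda')\subseteq\delta\Lambda_k(T)\subseteq\overline{\Lambda_k(T)}$, the closed segment $[\lambda,\lambda']$ lies in the closed half-plane $\omega^c$, whose bounding line $\delta\omega$ passes through the relative-interior point $\mu$ of that segment. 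If $\delta\omega$ were not the line $\mathcal{L}$ through $\lambda$ and $\lambda'$, then these two distinct lines through $\mu$ would cross transversally there, so the segment would contain points of the open half-plane $\omega$ arbitrarily close to $\mu$ on one side, contradicting $[\lambda,\lambda']\subseteq\omega^c$. Therefore $\delta\omega=\mathcal{L}$, and in particular $\lambda\in\delta\omega$.

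Finally I would slide $H$ along $\mathcal{L}$ from $\mu$ to $\lambda$. The ray $\ell$ is one of the two rays of $\mathcal{L}$ issuing from $\mu$; let $\ell^{*}$ be the ray of $\mathcal{L}$ issuing from $\lambda$ in the same direction, and set $K=\omega\cup\ell^{*}$, a half closed-half plane at $\lambda$. If $\ell$ points from $\mu$ toward $\lambda$, then $\ell^{*}\subseteq\ell$, so $K\subseteq H$ and $\text{dim ran}E_U(K)\le\text{dim ran}E_U(H)<k$. If $\ell$ points from $\mu$ toward $\lambda'$, then $\ell^{*}=[\lambda,\mu)\cup\ell$, so $K=H\cup[\lambda,\mu)$ is a disjoint union; here the hypothesis that $T$ is a \emph{strict} contraction enters, since it forces $[\lambda,\mu]\subseteq\overline{\Lambda_k(T)}\subseteq\overline{W(T)}\subseteq\{z:|z|\le\|T\|\}$, a set disjoint from $\mathbb{T}\supseteq\sigma(U)$, whence $E_U([\lambda,\mu))=0$ and $\text{dim ran}E_U(K)=\text{dim ran}E_U(H)<k$. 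In both cases $K$ is a half closed-half plane at $\lambda$ with $\text{dim ran}E_U(K)<k$, so by Lemma \ref{equivalent criterion of being member of our set} and Theorem \ref{main theorem} we get $\lambda\notin\Omega_k(U)=\Lambda_k(U)$, contradicting $\lambda\in\Lambda_k(T)\subseteq\Lambda_k(U)$. This shows $\mu\in\Lambda_k(U)$ for every unitary dilation $U$, hence $\mu\in\Lambda_k(T)$, and so $(\lambda,\lambda')\subseteq\Lambda_k(T)$.

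The main obstacle will be the geometric step in the second paragraph — forcing the separating line of the half closed-half plane $H$ to be exactly the line carrying the boundary segment $(\lambda,\lambda')$, which is where the roles of $\lambda$, $\lambda'$ and the interior point $\mu$ are genuinely used. Once that is in place, the rest is only the monotonicity of $\text{dim ran}E_U(\cdot)$ under inclusion together with the elementary observation, valid precisely because $T$ is a strict contraction and $U$ is unitary, that a line segment contained in $\{z:|z|\le\|T\|\}$ meets $\sigma(U)\subseteq\mathbb{T}$ in the empty set and therefore carries no spectral mass of $U$.
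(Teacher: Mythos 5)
Your proposal is correct and follows essentially the same route as the paper's proof: assume a point $\mu=\lambda_{\circ}$ of the segment fails to lie in $\Lambda_k(U)$, invoke Theorem \ref{main theorem} and Lemma \ref{equivalent criterion of being member of our set} to get a half closed-half plane of small spectral dimension at $\mu$, force its boundary line to be the line through $\lambda,\lambda'$ (your closure-and-transversality argument is just a rephrasing of the paper's ball-around-the-midpoint argument), and then translate the half closed-half plane to $\lambda$ using that the segment $[\lambda,\mu)$ lies strictly inside $\mathbb{D}$ (strict contraction) and so carries no spectral mass of the unitary $U$, contradicting $\lambda\in\Lambda_k(T)\subseteq\Lambda_k(U)$. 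The only cosmetic difference is that the paper rules out the ray pointing toward $\lambda$ immediately from $\lambda\in\Lambda_k(U)$, whereas you treat it as a separate (equally valid) case via monotonicity of $\mathrm{dim\,ran}\,E_U(\cdot)$.
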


\begin{proof}
Let $\Lambda_k(T)$ contains a point $\lambda\in\delta\Lambda_k(T)$. Suppose $(\lambda,\lambda')$ is any open line segment in $\delta\Lambda_k(T)$. Let $L$ be the line extending $\lambda,\lambda'$ and $H$ be the open half plane generated by $L$ such that $H\cap\Lambda_k(T)=\emptyset$. Suppose $U$ is a unitary dilation of $T$. Then $\Lambda_k(T)\subseteq\Lambda_k(U)$. We claim that $(\lambda,\lambda')\subseteq\Lambda_k(U)$.  Let there exists $\lambda_{\circ}\in (\lambda,\lambda')$ such that $\lambda_{\circ}\notin\Lambda_k(U)$. By Theorem \ref{main theorem} and Lemma \ref{equivalent criterion of being member of our set}, there exists a half closed-half plane $H_{\lambda_{\circ}}$ at $\lambda_{\circ}$ such that $\text{dim ran}E_U(H_{\lambda_{\circ}})<k$. We now argue that $\delta H_{\lambda_\circ}=L$. If $\delta H_{\lambda_\circ}\neq L$ then there exists a ball $B$ with centre at $\frac{\lambda_{\circ}+\lambda'}{2}$ (or $\frac{\lambda_{\circ}+\lambda}{2}$) such that $B\subseteq H_{\lambda_{\circ}}$. As $\Lambda_k(T)\subseteq\Lambda_k(U)\subseteq H_{\lambda_{\circ}}^c$, we get $B\cap\Lambda_k(T)=\emptyset$ which contradicts the fact that $\frac{\lambda_{\circ}+\lambda'}{2}$ (or $\frac{\lambda_{\circ}+\lambda}{2}$) is a boundary point of $\Lambda_k(T)$. So, we conclude that $\delta H_{\lambda_\circ}=L$. Again as $\lambda\in\Lambda_k(U)$, the only possible choice of $H_{\lambda_{\circ}}=H\cup l_{\lambda_{\circ}}$ where $l_{\lambda_{\circ}}$ is a ray with initial point $\lambda_{\circ}$ passing through $\lambda'$. Let $H_{\lambda}=H\cup l_{\lambda}$ where $l_\lambda$ is a ray with initial point $\lambda$ passing through $\lambda_{\circ}$. Then $H_{\lambda}$ is a half closed-half plane at $\lambda$ with $\text{dim ran}E_U(H_\lambda)<k$ as $\text{dim ran}E_U(H_{\lambda_{\circ}})<k$ and $\vert\lambda\vert<1$. So, by Lemma \ref{equivalent criterion of being member of our set} and Theorem \ref{main theorem}, we get $\lambda\notin\Lambda_k(U)$ which is a contradiction. Hence $(\lambda,\lambda')\subseteq\Lambda_k(U)$. As $U$ is arbitrary unitary dilation of $T$, we have $(\lambda,\lambda')\subseteq\Lambda_k(T)$ by hypothesis.
\end{proof}

The converse of Proposition \ref{generalizing only if direction of Wu's theorem} is not true for $k\geq 2$ as shown in the following example.

\begin{example}
Let $R=\left\{z\in\mathbb{C}:-\frac{1}{2}\leq\Re{(z)},\Im{(z)}\leq\frac{1}{2}\right\}$ and $\mu$ be the planer Lebesgue measure on $R$. Suppose $k\geq 2$. Let $T=T_1\oplus T_2$ where $T_1$ is the multiplication operator by the variable $z$ on the Hilbert space $L^2(\mu)$ and $T_2=(\frac{1}{2}+\frac{i}{4})I_{k-1}\oplus\left[\frac{1}{2}-\frac{i}{4}\right]\in M_k$. Clearly $T$ is a normal strict contraction with $\sigma_e(T)=\left\{\frac{1}{2}+\frac{i}{4},\frac{1}{2}-\frac{i}{4}\right\}$ and $\sigma(T)=R$. Now, by Lemma \ref{equivalent criterion of being member of our set} and Theorem \ref{main theorem}, we obtain
\begin{align*}
\Lambda_k(T)=\left\{z\in\mathbb{C}:-\frac{1}{2}<\Re{(z)},\Im{(z)}<\frac{1}{2}\right\}.
\end{align*}
Note that $\frac{1}{2}\notin\Lambda_k(T)$ and the condition whenever $\Lambda_k(T)$ contains a point $\lambda\in\delta \Lambda_k(T)$ it also contains every open line segment $(\lambda,\lambda')$ in $\delta \Lambda_k(T)$ is being satisfied here vacuously.

Let $U$ be a unitary dilation of $T$. Suppose $\frac{1}{2}\notin\Lambda_k(U)$. Then by Theorem \ref{main theorem} and Lemma \ref{equivalent criterion of being member of our set}, there exists a half closed-half plane $H$ at $\frac{1}{2}$ such that $\text{dim ran}E_U(H)<k$. As $\Lambda_k(T)\subseteq\Lambda_k(U)$, we can see $H$ is the union of $\{z\in\mathbb{C}:\Re{(z)}>\frac{1}{2}\}$ with either $L_1$ or $L_2$ where $L_1, L_2$ are two opposite directed rays with initial point $\frac{1}{2}$ on the line $\{z\in\mathbb{C}:\Re{(z)}=\frac{1}{2}\}$. Clearly $U=U_1\oplus U_2$ where $U_1$ is a unitary dilation of $T_1$ and $U_2$ is the same for $T_2$. As $\text{dim ran}E_U(H)<k$, we have $\text{dim ran}E_{U_2}(H)<k$. Denote $\Gamma_1=\mathbb{T}\cap H$ and $\Gamma_2=\mathbb{T}\cap H^c$. Let $U_2=U_2'\oplus U_2''$ with respect to the decomposition $\text{ran}E_{U_2}(\Gamma_1)\oplus\text{ran}E_{U_2}(\Gamma_2)$. As codimension of $\text{ran}E_{U_2}(\Gamma_2)$ is less than $k$ and $T_2\in M_k$, there exists $\xi\in\text{ran}E_{U_2}(\Gamma_2)\cap\mathbb{C}^k$ with $\Vert\xi\Vert=1$ such that $\langle T_2\xi,\xi\rangle=\langle U_2''\xi,\xi\rangle\in\text{conv}\Gamma_2$ which is a contradiction as $W(T_2)=\left[\frac{1}{2}-\frac{i}{4},\frac{1}{2}+\frac{i}{4}\right]$. So, $\frac{1}{2}\in\Lambda_k(U)$. Therefore, the higher rank numerical range of every unitary dilations of $T$ contains $\Lambda_k(T)$ as a proper subset. 
\end{example}

\begin{example}
Let $\Gamma=\{z\in\mathbb{C}:\Re{(z)}\leq 0,\; \vert z\vert=1\}$ and $\mu$ is a Lebesgue measure supported on $\Gamma$. Suppose $k\in\mathbb{N}_\infty$. Let $T=T_1\oplus iI_k$ where $T_1$ is the multiplication operator by the variable $z$ on the Hilbert space $L^2(\mu)$. Note that $\sigma_e(T)=\{i\}$ and $\sigma(T)=\Gamma$. By Lemma \ref{equivalent criterion of being member of our set}, Theorem \ref{main theorem} and Corollary \ref{compute numerical range}, we obtain
\begin{align*}
\Lambda_k(T)=\{z\in\mathbb{C}:\Re{(z)}<0,\; \vert z\vert<1\}\cup\{i\}.
\end{align*}
We have here $\Lambda_k(T)=\bigcap\left\{\Lambda_k(U): U \text{ is a unitary dilation of T}\right\}$ (as $T$ is unitary) but $\Lambda_k(T)$ does not contain $(i,-i)\subseteq\delta\Lambda_k(T)$ despite containing $i\in\delta\Lambda_k(T)$. Note here that $T$ is not a strict contraction.
\end{example}

The next theorem gives a necessary and sufficient condition on a normal contraction $T$ acting on an infinite dimensional separable Hilbert space that its higher rank numerical range equals the intersection of the higher rank numerical ranges of unitary dilations of $T$. This theorem strengthens and generalises Theorem 3.2, \cite{W} by Wu. The condition and the proof given there are erroneous as can be seen in the following example.

\begin{example}\label{counter example for Wu's theorem}
Consider $T=\bigoplus\limits_{n\geq 2}
    \begin{pmatrix}
    -\frac{1}{n} & 0 \\
    0 & \frac{e^{\frac{i\pi}{n}}}{n}
    \end{pmatrix}$. As $T$ is diagonalizable, it can be computed directly (or otherwise) that $W(T)=\text{conv}[\{-\frac{1}{n}:n\geq 2\}\cup\{\frac{e^{\frac{i\pi}{n}}}{n}:n\geq 2\}]$. Note that $0\notin W(T)$ and whenever $W(T)$ contains a point $\lambda\in\delta W(T)$ it also contains every open line segment $(\lambda,\lambda')$ in $\delta W(T)$.

Let $U$ be a unitary dilation of $T$.  Then $W(T)\subseteq W(U)$. Let $0\not\in W(U)$. Then by Theorem \ref{main theorem} and Lemma \ref{equivalent criterion of being member of our set}, there exists a half closed-half plane $H_0$ at $0$ such that $E_U(H_0)=0$. As $W(T)\subseteq W(U)$, $\delta H_0$ has to be the real axis. Again as $\{-\frac{1}{n}:n\geq 2\}\subseteq W(U)$, the only possible choice of $H_0=\{z\in\mathbb{C}:\Im{(z)}<0\}\cup [0,\infty)$. Then $H_{-\frac{1}{2}}=\{z\in\mathbb{C}:\Im{(z)}<0\}\cup [-\frac{1}{2},\infty)$ is a half closed-half plane at $-\frac{1}{2}$ with $E_U(H_{-\frac{1}{2}})=0$ as $E_U(H_0)=0$. Again by Lemma \ref{equivalent criterion of being member of our set} and Theorem \ref{main theorem}, we have $-\frac{1}{2}\notin W(U)$ which is a contradiction. Hence, $0\in W(U)$. Therefore, the numerical range of every unitary dilations of $T$ contains $W(T)$ as a proper subset.
\end{example}

The following is the main theorem of this section.

\begin{theorem}\label{generalizing Wu's theorem}
Let $T$ be a normal strict contraction and $k\in\mathbb{N}_{\infty}$. Then
\begin{align*}
\Lambda_k(T)=\bigcap\left\{\Lambda_k(U): U \text{ is a unitary dilation of T}\right\}
\end{align*}
holds if and only if for any $\lambda\in\delta\Lambda_k(T)\setminus \Lambda_k(T)$, there exists a closed half plane $H$ with the line $\delta H$ passing through $\lambda$ such that $\text{dim ran}E(H)<k$.
\end{theorem}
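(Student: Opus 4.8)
The plan is to prove both implications by working with the description $\Lambda_k(\cdot)=\Omega_k(\cdot)$ from Theorem \ref{main theorem}, reducing everything to statements about half closed-half planes and the spectral measure. Throughout, $U$ denotes an arbitrary unitary dilation of $T$, and since any such $U$ satisfies $T=P_{\mathscr H}U|_{\mathscr H}$ we always have $\Lambda_k(T)\subseteq\Lambda_k(U)$, hence the nontrivial inclusion is $\bigcap_U\Lambda_k(U)\subseteq\Lambda_k(T)$. Note also that $T$ being a strict contraction means $\sigma(T)\subseteq\overline{\mathbb D}$ with no spectrum on $\mathbb T$, so $\overline{\Lambda_k(T)}\subseteq\overline{\mathbb D}$, and by Lemma \ref{partition of disjoint set} every point of $\overline{\Lambda_k(T)}\setminus\Lambda_k(T)$ lies in $\delta\Lambda_k(T)$; this is why the hypothesis only needs to quantify over $\lambda\in\delta\Lambda_k(T)\setminus\Lambda_k(T)$.

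For the ``if'' direction, I would take $\lambda\notin\Lambda_k(T)$ and produce a single unitary dilation $U$ with $\lambda\notin\Lambda_k(U)$. If $\lambda\notin\overline{\Lambda_k(T)}$ one can separate $\lambda$ from $\overline{\mathbb D}\supseteq\overline{\Lambda_k(T)}$ by an open half plane $H$ disjoint from $\overline{\mathbb D}$; then for \emph{every} unitary dilation $U$ one has $\sigma(U)\subseteq\mathbb T$, so $E_U(H\cap\mathbb T)=E_U(\emptyset)=0$ and $\lambda\in H^\circ$ gives $\lambda\notin\Omega_k(U)=\Lambda_k(U)$ directly. The remaining case is $\lambda\in\delta\Lambda_k(T)\setminus\Lambda_k(T)$, where the hypothesis hands us a closed half plane $H$ with $\delta H\ni\lambda$ and $\operatorname{dim ran}E_T(H)<k$. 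The idea is to dilate the two pieces separately: write $T=T_1\oplus T_2$ with $T_1=P_{\operatorname{ran}E_T(H)}T|_{\operatorname{ran}E_T(H)}$ (dimension $<k$, with $W(T_1)\subseteq H$) and $T_2=P_{\operatorname{ran}E_T(H^c)}T|_{\operatorname{ran}E_T(H^c)}$ (with $W(T_2)\subseteq H^c$), then choose unitary dilations $U_1,U_2$ of $T_1,T_2$ whose numerical ranges still avoid the ``wrong'' open half plane $H^\circ$ (resp. a slightly shrunk complement) — this is exactly the kind of construction used in Durszt's example and in Example 5.x above: one picks $U_2$ whose spectrum lies on the arc $\mathbb T\cap H^c$, and $U_1$ finite-dimensional with spectrum controlled on $\mathbb T\cap H$. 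Setting $U=U_1\oplus U_2$, a half closed-half plane at $\lambda$ obtained by adjoining to $H^\circ$ the appropriate ray of $\delta H$ has $E_U$-range of dimension $<k$ (the $U_1$ part contributes $<k$, the $U_2$ part contributes $0$ after a suitable choice of the ray), so $\lambda\notin\Omega_k(U)=\Lambda_k(U)$.

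For the ``only if'' direction, suppose the equality holds and let $\lambda\in\delta\Lambda_k(T)\setminus\Lambda_k(T)$; I must produce a closed half plane through $\lambda$ with small $E_T$-measure. Since $\lambda\notin\Lambda_k(T)=\Omega_k(T)$, Lemma \ref{equivalent criterion of being member of our set} gives a half closed-half plane $\widetilde H$ at $\lambda$ with $\operatorname{dim ran}E_T(\widetilde H)<k$; the task is to upgrade this to a \emph{closed} half plane, i.e.\ to show the missing ray $\delta\widetilde H\setminus\widetilde H$ also carries no $E_T$-mass beyond what keeps the total below $k$. I would argue by contrapositive on that ray: if $\operatorname{dim ran}E_T(\widetilde H\cup(\delta\widetilde H))\ge k$ for every choice of supporting line at $\lambda$, one shows $\lambda$ fails to be merely a boundary point excluded from $\Lambda_k(T)$ — more precisely, using $\lambda\in\delta\Lambda_k(T)$ one picks a point $\mu\in\Lambda_k(T)$ near $\lambda$, notes the segment $[\mu,\lambda)$ lies in $\Lambda_k(T)$, and builds from Proposition \ref{generalizing only if direction of Wu's theorem} (whose hypothesis is exactly the assumed equality) a unitary dilation $U$ and a half closed-half plane at $\lambda$ for $E_U$ of dimension $<k$ \emph{which cannot be extended to a closed half plane for $E_T$}, contradicting the hypothesis via the chain $\Lambda_k(T)\subseteq\Lambda_k(U)$ and the forced shape of the separating half plane.

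The main obstacle I anticipate is the ``only if'' direction: controlling the ray on the boundary line $\delta\widetilde H$. The half closed-half plane given by $\Omega_k(T)\ne\Lambda_k(T)$ only records information about one choice of the boundary ray, and the statement to be proved asks for a \emph{closed} half plane (both rays), so one genuinely needs the dilation hypothesis — a purely spectral argument about $T$ alone will not suffice, as Example \ref{infinity rank numerical range being empty}-type configurations show. Getting the geometry of the supporting lines at $\lambda$ right, handling the $k=\infty$ case (where Lemma \ref{SOT limit of projections} fails and one must instead invoke Lemma \ref{relation between k and infinity} to pass from all finite $k$), and making the ``extend the ray at $\lambda$ to a ray at $\lambda_\circ$'' manoeuvre from Proposition \ref{generalizing only if direction of Wu's theorem} do the work, will be the delicate points; the ``if'' direction should be a relatively routine assembly of the dilation constructions already appearing in the examples above.
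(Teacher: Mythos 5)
There are two genuine gaps. First, in your ``if'' direction the case $\lambda\notin\overline{\Lambda_k(T)}$ is handled incorrectly: you separate $\lambda$ from $\overline{\mathbb D}$ by a half plane disjoint from $\overline{\mathbb D}$, which is only possible when $\lambda\notin\overline{\mathbb D}$, whereas $\lambda$ may well lie inside the unit disc and outside $\overline{\Lambda_k(T)}$ (take $T=0$ and $\lambda=\tfrac12$; then no such half plane exists, and in fact a bilateral-shift--type unitary dilation $U$ of $0$ has $\Lambda_k(U)=\mathbb D\ni\tfrac12$, so it is not true that \emph{every} dilation excludes $\lambda$ --- one must construct a particular one). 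The paper does this by choosing the rotated Halmos dilation $U$ built from $\alpha$ with $\Re(e^{i\alpha}\lambda)>\lambda_k(\Re(e^{i\alpha}T))$, so that $\lambda_k(\Re(e^{i\alpha}U))=\lambda_k(\Re(e^{i\alpha}T))$, and then invokes Theorem \ref{geomertic description for operator} (and Theorem 5.2 of \cite{LPS} for $k=\infty$). Your treatment of the boundary case $\lambda\in\delta\Lambda_k(T)\setminus\Lambda_k(T)$, by contrast, is essentially the paper's construction ($T=T_1\oplus T_2$ along $E(H)$, a finite unitary dilation of the diagonalized $T_1$ using pairs of points of $\mathbb T$ through each eigenvalue, and a rotated Halmos dilation of $T_2$ forcing $W(U_2)\subseteq H^c$), so that part is fine in outline.

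Second, and more seriously, the ``only if'' direction --- which is the real content of the theorem --- is not proved: your plan to ``upgrade'' the half closed-half plane $\widetilde H$ for $E_T$ to a closed half plane via Proposition \ref{generalizing only if direction of Wu's theorem} is not an argument (that proposition concerns open segments in $\delta\Lambda_k(T)$ and gives no control of the missing boundary ray), and you yourself flag this as unresolved. The paper's proof goes the other way around: from the assumed equality pick a unitary dilation $U$ with $\lambda\notin\Lambda_k(U)$, obtain (Theorem \ref{main theorem} and Lemma \ref{equivalent criterion of being member of our set}) a half closed-half plane $H_\lambda$ at $\lambda$ with $\text{dim ran}\,E_U(H_\lambda)<k$, and set $H=\overline{H_\lambda}$ --- a closed half plane for $T$, not for $U$. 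If $\text{dim ran}\,E_T(H)\ge k$, then since $\sigma(U)\subseteq\mathbb T$ the subspace $\text{ran}\,E_U(\mathbb T\cap H_\lambda^c)$ has codimension $<k$, so it meets $\text{ran}\,E_T(H)\subseteq\mathscr H$ in a unit vector $\xi$; the compression identity gives $\langle T\xi,\xi\rangle=\langle U\xi,\xi\rangle$, which lies in $H$ by Lemma \ref{positive measure} and in $\text{conv}(\mathbb T\cap H_\lambda^c)$, and these two sets meet only in a point of $\mathbb T\cap\delta H$, which is excluded because $\vert\langle T\xi,\xi\rangle\vert\le\Vert T\Vert<1$. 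This is exactly where the strict-contraction hypothesis is used; your proposal never uses it, which is itself a warning sign, since the theorem fails without it (see the example with $T=T_1\oplus iI_k$ preceding Example \ref{counter example for Wu's theorem}).
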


\begin{proof}
``$\Rightarrow$''. Suppose $\Lambda_k(T)=\bigcap\left\{\Lambda_k(U): U \text{ is a unitary dilation of T}\right\}$ holds. Let $\lambda\in\delta\Lambda_k(T)\setminus\Lambda_k(T)$. Then by hypothesis, there exists a unitary dilation $U$ of $T$ such that $\lambda\in\delta\Lambda_k(U)\setminus\Lambda_k(U)$. So, by Theorem \ref{main theorem} and Lemma \ref{equivalent criterion of being member of our set}, there exists a half closed-half plane $H_{\lambda}$ at $\lambda$ such that $\text{dim ran}E_U(H_\lambda)<k$. Denote $\Gamma_1=\mathbb{T}\cap H_\lambda$ and $\Gamma_2=\mathbb{T}\cap H_\lambda^c$. Let $U=U_1\oplus U_2$ with respect to the decomposition $\text{ran}E_U(\Gamma_1)\oplus\text{ran}E_U(\Gamma_2)$. Suppose $H=\overline{H_\lambda}$. Then $H$ is a closed half plane with the line $\delta H$ passing through $\lambda$. Let $T=T_1\oplus T_2$ with respect to the decomposition $\text{ran}E(H)\oplus\text{ran}E(H^c)$. Let $\text{dim ran}E(H)\geq k$. As codimension of $\text{ran}E_U(\Gamma_2)$ is less than $k$, there exists $\xi\in\text{ran}E_U(\Gamma_2)\cap\text{ran}E(H)$ with $\Vert\xi\Vert=1$ such that $\langle T_1\xi,\xi\rangle=\langle U_2\xi,\xi\rangle$. As $T_1$ is a strict contraction, we have $\langle T_1\xi,\xi\rangle=\langle U_2\xi,\xi\rangle\in H^c$ which contradicts that $W(T_1)\subseteq H$. So, $\text{dim ran}E(H)<k$.\\
``$\Leftarrow$''. It is clear that
\begin{align*}
\Lambda_k(T)\subseteq\bigcap\left\{\Lambda_k(U): U \text{ is a unitary dilation of T}\right\}.
\end{align*}
Let $\lambda\notin\Lambda_k(T)$. We need to show that there exists a unitary dilation $U$ of $T$ such that $\lambda\notin\Lambda_k(U)$. Suppose $\lambda\notin\overline{\Lambda_k(T)}$. Then by Theorem \ref{geomertic description for operator} and Theorem 5.2 \cite{LPS}, there exists $\alpha\in[0,2\pi)$ such that $\Re{(e^{i\alpha}\lambda)}>\lambda_k(\Re{(e^{i\alpha}T)})$. We choose
\begin{align*}
U=\begin{pmatrix}
    T & -e^{-i\alpha}\sqrt{I-TT^*}\\
    e^{-i\alpha}\sqrt{I-T^*T} & e^{-2i\alpha}T^*\\
  \end{pmatrix}.
\end{align*}
Then $U$ is a unitary dilation of $T$ with $\Re{(e^{i\alpha}\lambda)}>\lambda_k(\Re{(e^{i\alpha}T)})=\lambda_k(\Re{(e^{i\alpha}U)})$. So, again by Theorem \ref{geomertic description for operator} and Theorem 5.2, \cite{LPS}, we obtain $\lambda\notin\overline{\Lambda_k(U)}$ and hence $\lambda\notin\Lambda_k(U)$.

Next, let $\lambda\in\delta\Lambda_k(T)\setminus\Lambda_k(T)$. Then by hypothesis, there exists a closed half plane $H$ with the line $\delta H$ passing through $\lambda$ such that $\text{dim ran}E(H)<k$. Let $T=T_1\oplus T_2$ with respect to the decomposition $\text{ran}E(H)\oplus\text{ran}E(H^c)$. Clearly $T_1, T_2$ are both normal. Without loss of generality, we can take $T_1=\text{diag}(d_1,d_2,\cdots,d_r)$ for some $r<k$. Denote $\Gamma_1=\mathbb{T}\cap H$ and $\Gamma_2=\mathbb{T}\cap H^c$. Let $j\in\{1,2,\cdots,r\}$. Choose $\xi_j\in\Gamma_1$ and $\eta_j\in\Gamma_2$ such that $d_j$ lies on the line segment joining $\xi_j$ and $\eta_j$. Then $V_j=\text{diag}(\xi_j,\eta_j)$ is a unitary dilation of $(d_j)$. So, $U_1=\sum^\oplus_jV_j$ is a unitary dilation of $T_1$ with $\text{dim ran}E_{U_1}(H)=r<k$. Note that $W(T_2)\subseteq H^c$. Then there exists $\beta\in[0,2\pi)$ and $\mu\in\mathbb{R}$ such that $W(e^{i\beta}T_2)\subseteq\{z\in\mathbb{C}:\Re{(z)}<\mu\}$. We again choose
\begin{align*}
U_2=\begin{pmatrix}
    T_2 & -e^{-i\beta}\sqrt{I-T_2T_2^*}\\
    e^{-i\beta}\sqrt{I-T_2^*T_2} & e^{-2i\beta}T_2^*\\
  \end{pmatrix}.
\end{align*}
Then $U_2$ is a unitary dilation of $T_2$ with $W(e^{i\beta}U_2)\subseteq\{z\in\mathbb{C}:\Re{(z)}<\mu\}$ which implies $W(U_2)\subseteq H^c$. So, $\text{dim ran}E_{U_2}(H)=0$. Then $U=U_1\oplus U_2$ is a unitary dilation of $T$ with $\text{dim ran}E_U(H)=r<k$. Hence by Lemma \ref{equivalent criterion of being member of our set} and Theorem \ref{main theorem}, we obtain $\lambda\notin\Lambda_k(U)$. 
\end{proof}

In the following conjecture, we propose a necessary and sufficient condition on a contraction $T$ acting on an infinite dimensional separable Hilbert space that its higher rank numerical range equals the intersection of the higher rank numerical ranges of unitary dilations of $T$ which, in particular, gives a necessary and sufficient condition for the Halmos conjecture (\ref{Halmos conjecture2}) to be true.

\begin{conjecture}\label{NASC for Halmos conjecture to be true}
Let $T$ be a strict contraction and $k\in\mathbb{N}_{\infty}$. Then
\begin{align*}
\Lambda_k(T)=\bigcap\left\{\Lambda_k(U): U \text{ is a unitary dilation of T}\right\}
\end{align*}
holds if and only if for any $\lambda\in\delta\Lambda_k(T)\setminus\Lambda_k(T)$, there exists $\theta\in[0,2\pi)$ such that
\begin{align*}
\text{dim ran}E_S([0,\infty))<k \text{ where } S=\Re{(e^{i\theta}T-\lambda)}.
\end{align*}
\end{conjecture}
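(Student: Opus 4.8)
The plan is to prove the two implications separately, in each case reducing membership questions to the geometric description of the higher rank numerical range: Theorem \ref{main theorem} for the unitary dilations $U$ (which are normal, so $\Lambda_k(U)=\Omega_k(U)$ and, by Lemma \ref{equivalent criterion of being member of our set}, $\lambda\notin\Lambda_k(U)$ is detected by a half closed-half plane $H$ at $\lambda$ with $\text{dim ran}E_U(H)<k$), and Theorem \ref{geomertic description for operator} together with Theorem $5.2$ of \cite{LPS} for $T$ itself. For the necessity direction, suppose the displayed equality holds and fix $\lambda\in\delta\Lambda_k(T)\setminus\Lambda_k(T)$; then some unitary dilation $U$ has $\lambda\notin\Lambda_k(U)$, and since $\Lambda_k(T)\subseteq\Lambda_k(U)$, $\lambda\in\overline{\Lambda_k(T)}\subseteq\overline{\Lambda_k(U)}$ and $\Lambda_k(U)$ is convex, we get $\lambda\in\overline{\Lambda_k(U)}\setminus\Lambda_k(U)\subseteq\delta\Lambda_k(U)$. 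By Theorem \ref{main theorem} and Lemma \ref{equivalent criterion of being member of our set} there is a half closed-half plane $H$ at $\lambda$, with $\delta H$ of some slope, so that $\overline{H}=\{z:\Re(e^{i\theta}(z-\lambda))\geq 0\}$ and $\text{dim ran}E_U(H)<k$. Now $\Re(e^{i\theta}(U-\lambda))$ is a self-adjoint dilation of $S:=\Re(e^{i\theta}(T-\lambda))$, and for any self-adjoint dilation $\widetilde{S}$ of $S$ one has $\text{dim ran}E_S([0,\infty))\leq\text{dim ran}E_{\widetilde{S}}([0,\infty))$: indeed $\widetilde{S}$ is strictly negative on $\text{ran}E_{\widetilde{S}}((-\infty,0))$, a subspace of codimension $\text{dim ran}E_{\widetilde{S}}([0,\infty))$, and the intersection of this subspace with $\mathscr{H}$ meets $\text{ran}E_S([0,\infty))$ only in $0$. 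Since $\text{dim ran}E_{\Re(e^{i\theta}(U-\lambda))}([0,\infty))=\text{dim ran}E_U(\overline{H})$ exceeds $\text{dim ran}E_U(H)<k$ only by the contribution of the boundary ray $\overline{H}\setminus H$, the remaining task is to absorb that contribution; I would do this by exploiting the freedom in the choice of $U$ and of the separating direction, arranging the line $\delta H$ at $\lambda$ to be the supporting line of the convex set $\overline{\Lambda_k(T)}$, the aim being to force $E_U$ to have no mass on the far half of $\delta\overline{H}$ and hence $\text{dim ran}E_U(\overline{H})<k$. Making this last point precise is the delicate step of this direction.

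For sufficiency, the inclusion $\Lambda_k(T)\subseteq\bigcap\{\Lambda_k(U):U\text{ a unitary dilation of }T\}$ is immediate, so take $\lambda\notin\Lambda_k(T)$. If $\lambda\notin\overline{\Lambda_k(T)}$, then by Theorem \ref{geomertic description for operator} (and Theorem $5.2$ of \cite{LPS} when $k=\infty$) there is $\alpha\in[0,2\pi)$ with $\Re(e^{i\alpha}\lambda)>\lambda_k(\Re(e^{i\alpha}T))$, and the rotated Halmos dilation
\[
U=\begin{pmatrix} T & -e^{-i\alpha}\sqrt{I-TT^*}\\ e^{-i\alpha}\sqrt{I-T^*T} & e^{-2i\alpha}T^* \end{pmatrix}
\]
retains this strict inequality, so $\lambda\notin\overline{\Lambda_k(U)}\supseteq\Lambda_k(U)$, exactly as in the proof of Theorem \ref{generalizing Wu's theorem}.

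The genuinely new case is $\lambda\in\delta\Lambda_k(T)\setminus\Lambda_k(T)$. Here the hypothesis furnishes $\theta$ with $\text{dim ran}E_S([0,\infty))<k$ for $S=\Re(e^{i\theta}(T-\lambda))$; after the rotation $T\mapsto e^{i\theta}T$ (which preserves strict contractivity) we may take $\theta=0$ and set $P:=E_S([0,\infty))$, a projection of some rank $r<k$ which commutes with $\Re(T-\lambda)$ but not with $T$. It suffices to build one unitary dilation $U$ of $T$, on any $\mathscr{K}\supseteq\mathscr{H}$, together with a closed half plane $H$ through $\lambda$ such that $\text{dim ran}E_U(H)<k$, since then $H$ contains a half closed-half plane at $\lambda$ and Lemma \ref{equivalent criterion of being member of our set} with Theorem \ref{main theorem} yield $\lambda\notin\Lambda_k(U)$. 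The intended construction is to dilate $T$ in two coordinated pieces adapted to $P$: a finite-rank piece absorbing the ``bad'' directions, dilated to a unitary matrix whose spectrum straddles $\delta H$ as in the final paragraph of the proof of Theorem \ref{generalizing Wu's theorem}, and a remaining piece on which $\Re(T-\lambda)<0$, which can be pushed strictly into $H^{\circ c}$ by a rotated Halmos dilation; these are then assembled into a single unitary dilation of $T$, using that $\mathscr{H}$ is infinite dimensional to host the defect operators.

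The crux, and the reason the statement is posed as a conjecture, is the non-normality of $T$ in this last case. When $T$ is normal (Theorem \ref{generalizing Wu's theorem}) the projection $E(H)$ reduces $T$, so $T=T_1\oplus T_2$ along $\text{ran}E(H)\oplus\text{ran}E(H^c)$ and the two summands are dilated independently. For a general strict contraction the projection $P=E_S([0,\infty))$ does not reduce $T$, so a unitary dilation cannot be taken as a direct sum, and one must control the off-diagonal coupling—the $\frac{1}{2}(\sqrt{I-T^*T}-\sqrt{I-TT^*})$-type terms that appear in $\Re(e^{i\theta}U)$ for Halmos-type dilations and vanish precisely when $T$ is normal—so that it does not enlarge the $r<k$ dimensional ``bad'' subspace of $\Re(e^{i\theta}(U-\lambda))$. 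A complete proof will therefore need either a dilation construction that keeps this coupling from inflating that spectral subspace, or a reformulation that avoids building the dilation by hand, together with the matching care in the necessity direction about the closed-versus-half-closed half plane discrepancy described above.
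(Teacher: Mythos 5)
Note first that this statement is posed as a conjecture: the paper proves only the ``$\Rightarrow$'' direction and explicitly leaves sufficiency open. Your sufficiency discussion --- disposing of $\lambda\notin\overline{\Lambda_k(T)}$ by the rotated Halmos dilation exactly as in Theorem \ref{generalizing Wu's theorem}, and observing that for $\lambda\in\delta\Lambda_k(T)\setminus\Lambda_k(T)$ the projection $E_S([0,\infty))$ need not reduce a non-normal $T$, so the direct-sum dilation construction is unavailable --- is an accurate diagnosis of why that half is open, and you rightly stop short of claiming a proof; there is nothing in the paper to compare it with.

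The genuine gap is in the direction the paper does prove. Your monotonicity claim $\text{dim ran}E_S([0,\infty))\leq\text{dim ran}E_{\widetilde S}([0,\infty))$ for the self-adjoint dilation $\widetilde S=\Re\bigl(e^{i\theta}(U-\lambda)\bigr)$ is correct, but it only controls $\text{dim ran}E_U(\overline{H})$, and passing from the half closed-half plane $H$ (where $\text{dim ran}E_U(H)<k$) to the closed half plane $\overline{H}$ may add the eigenspace of $U$ at the far point $b$ where $\delta\overline{H}$ meets $\mathbb{T}$. Your proposed repair --- ``exploiting the freedom in the choice of $U$'' so that $E_U$ has no mass on the far ray --- is not available: the hypothesis merely hands you \emph{some} unitary dilation $U$ with $\lambda\notin\Lambda_k(U)$, and such a witness can have an eigenvalue at $b$ of rank $\geq k$, even infinite (enlarge any witness by a direct summand $bI$ on a space orthogonal to $\mathscr{H}$; the same $H$ still certifies $\lambda\notin\Lambda_k(U\oplus bI)$). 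So the intermediate statement you aim for, $\text{dim ran}E_U(\overline{H})<k$, is false in general; the needed bound must come from the interaction with $\mathscr{H}$ and $\Vert T\Vert<1$. That is exactly how the paper argues: assume $\text{dim ran}E_S([0,\infty))\geq k$; since the codimension of $\text{ran}E_U(\Gamma_2)$, with $\Gamma_2=\mathbb{T}\cap H^c$, is less than $k$, choose a unit vector $\xi\in\text{ran}E_S([0,\infty))\cap\text{ran}E_U(\Gamma_2)$; then $\langle T\xi,\xi\rangle=\langle U\xi,\xi\rangle$ lies in $\text{conv}\,\Gamma_2$ and has modulus at most $\Vert T\Vert<1$, and since the only spectral point of $U|_{\text{ran}E_U(\Gamma_2)}$ on the line $\delta\overline{H}$ is the unimodular endpoint $b$, this value cannot sit on that line and hence lies strictly in the open half plane $(\overline{H})^c$; thus $\langle S\xi,\xi\rangle<0$, contradicting $\langle S\xi,\xi\rangle\geq 0$ forced by $\xi\in\text{ran}E_S([0,\infty))$. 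In short, the strict-contraction hypothesis, applied vectorially to the intersection of the two spectral subspaces, is what absorbs the boundary-ray contribution --- not any choice of $U$. Replacing your closed-half-plane comparison by this argument (it is the same trivial-intersection mechanism as in your monotonicity lemma, just applied to $\text{ran}E_S([0,\infty))$ and $\text{ran}E_U(\Gamma_2)$) closes the gap, after which your necessity direction coincides in substance with the paper's.
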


We will see that the condition given in the Conjecture \ref{NASC for Halmos conjecture to be true} is necessary as discussed below. However, we strongly believe that the given condition is also sufficient which we wish to investigate in future.

\begin{proof}[\textbf{Proof of ``$\Rightarrow$'' in Conjecture \ref{NASC for Halmos conjecture to be true}}]
Suppose
\begin{align*}
\Lambda_k(T)=\bigcap\left\{\Lambda_k(U): U \text{ is a unitary dilation of T}\right\}
\end{align*} 
holds. Let $\lambda\in\delta\Lambda_k(T)\setminus\Lambda_k(T)$. Then by hypothesis, there exists a unitary dilation $U$ of $T$ such that $\lambda\in\delta\Lambda_k(U)\setminus\Lambda_k(U)$. Now, by Theorem \ref{main theorem} and Lemma \ref{equivalent criterion of being member of our set}, there exists a half closed-half plane $H_\lambda$ at $\lambda$ such that $\text{dim ran}E_U(H_\lambda)<k$. Set $\Gamma_1=\mathbb{T}\cap H_\lambda$ and $\Gamma_2=\mathbb{T}\cap H_\lambda^c$. Let $U=U_1\oplus U_2$ with respect to the decomposition $\text{ran}E_U(\Gamma_1)\oplus\text{ran}E_U(\Gamma_2)$. Suppose $H=\overline{H_\lambda}$. Then $H$ is a closed half plane with the line $\delta H$ passing through $\lambda$. Then there exists $\theta\in[0,2\pi)$ such that $H'=e^{i\theta}H-\lambda=\{z\in\mathbb{C}:\Re{(z)\geq 0}\}$. Denote $\Gamma_1'=e^{i\theta}\Gamma_1-\lambda, \Gamma_2'=e^{i\theta}\Gamma_2-\lambda, T'=e^{i\theta}T-\lambda, U'=e^{i\theta}U-\lambda$ and $U'=U_1'\oplus U_2'$ with respect to the decomposition $\text{ran}E_{U'}(\Gamma_1')\oplus\text{ran}E_{U'}(\Gamma_2')$. Let $\text{dim ran}E_S([0,\infty))\geq k$. As codimension of $\text{ran}E_{U'}(\Gamma_2')$ is less than $k$, there exists $\xi\in\text{ran}E_{U'}(\Gamma_2')\cap\text{ran}E_S([0,\infty))$ with $\Vert\xi\Vert=1$ such that $\langle T'\xi,\xi\rangle=\langle U_2'\xi,\xi\rangle$. So, $\langle T\xi,\xi\rangle=\langle U_2\xi,\xi\rangle$. As $T$ is a strict contraction, we have $\langle T\xi,\xi\rangle=\langle U_2\xi,\xi\rangle\in H^c$ which implies $\langle S \xi,\xi\rangle=\langle\Re{(U_2')}\xi,\xi\rangle<0$. As $\xi\in\text{ran}E_S([0,\infty))$, we also have
\begin{align*}
\langle S\xi,\xi\rangle=\int_\mathbb{R}x \; d\langle E_S(.)\xi,\xi\rangle=\int_0^\infty x \; d\langle E_S(.)\xi,\xi\rangle=\int_0^\infty x \; d\Vert E_S(.)\xi\Vert^2\geq 0
\end{align*} 
which leads to a contradiction. Hence, $\text{dim ran}E_S([0,\infty))<k$.
\end{proof}

We end this section by posing the following question which we could not able to answer at this stage. In fact, the answer is not known even for $k=1$.

\begin{question}
Let $T\in\mathscr{B(\mathscr{H})}$ and $k\in\mathbb{N}_{\infty}$. Is
\begin{align*}
\Lambda_k(T)=\bigcap\left\{\Lambda_k(N): N \text{ is a normal dilation of T}\right\}?
\end{align*}
\end{question}

\section*{Acknowledgements}
The research of the first author is supported by the Integrated Ph.D fellowship of Indian Institute of Science Education and Research Thiruvananthapuram.

\bibliographystyle{elsarticle-num}

\end{document}